\numberwithin{equation}{section}
\newtheorem{theorem}{Theorem}[section]
\newtheorem{proposition}[theorem]{Proposition}
\newtheorem{lemma}[theorem]{Lemma}
\newtheorem*{thmA}{Theorem A}
\theoremstyle{definition}
\newtheorem{remark}[theorem]{Remark}
\def\g{\mathfrak g}
\def\R{{\mathfrak R}}
\def\begeq{\begin{equation}}
	\def\endeq{\end{equation}}
\def\R{\Bbb R}
\def\g{\gamma}
\begin{document}
	
	\title[Non-degeneracy of double-tower solutions for  Schr\"odinger equation
	]
	{   Non-degeneracy of double-tower solutions for nonlinear Schr\"odinger equation and  applications
	}

	\author{Yuan Gao and  Yuxia Guo
	}
	
	\address{Yuan Gao,
		\newline\indent Department of Mathematical Sciences, Tsinghua University,
		\newline\indent Beijing 100084,  P. R. China.
	}
	\email{gaoy22@mails.tsinghua.edu.cn}

	\address{Yuxia Guo,
		\newline\indent Department of Mathematical Sciences, Tsinghua University,
		\newline\indent Beijing 100084,  P. R. China.
	}
	\email{yguo@tsinghua.edu.cn}
	
\thanks{This work  is supported by NSFC (No. 12031015, 12271283).}

	\begin{abstract}
		
		This paper is concerned with  the following  nonlinear  Schr\"odinger equation
		\begin{equation}
			\label{eq}
			- \Delta u + V(|y|)u=u^{p},\quad u>0 \ \   \mbox{in} \  \R^N, \ \ \
			u \in H^1(\R^N),
		\end{equation}
		where $V(|y|)$ is a  positive function, $1<p <\frac{N+2}{N-2}$. Based on the local Pohozaev identities and blow-up analysis, we first prove a non-degeneracy result for double-tower solutions constructed in \cite{DM} in a suitable symmetric space. As an application, we obtain the existence of new type solutions for \eqref{eq}.
		
		\vspace{2mm}
		
		{\textbf{Keyword:} Schr\"odinger equation, Non-degeneracy, Double-tower solutions, Local Pohozaev identities.}
		
		\vspace{2mm}
		
		{\textbf{AMS Subject Classification:}
			35A01, 35B38, 35J25.}

	\end{abstract}
	
	\maketitle
	
	\section{Introduction}

	We consider the following nonlinear Schr\"odinger equations:
	\begin{equation}\label{equation0}
		\begin{cases}
			-\Delta u + V(y) u = |u|^{p-1} u, \quad & \text{in}\; \mathbb R^N,\\
			\lim_{|y|\to+\infty}
			u(y)=0,
		\end{cases}
	\end{equation}
or the following nonlinear field equations in the subcritical case:

\begin{equation}\label{equation1}
		\begin{cases}
			-\Delta u +  u = Q(x)|u|^{p-1}u, \quad  & \text{in}\; \mathbb R^N,\\
			\lim_{|y|\to+\infty}
			u(y)=0,
		\end{cases}
	\end{equation}
	with  $V(y),Q(y)$  satisfying some suitable conditions.
 These problems originate from seeking the standing waves solutions with the  form of
	$
	\psi(y,t) = e^{{  \bf{ i } } \lambda t } u(y)
	$
	to the time-dependent Schr\"odinger equation
	\begin{align}\label{original1}
		-{  \bf{ i } } \frac{\partial \psi}{\partial t}  = \Delta \psi - \widetilde V(y) \psi + |\psi|^{p-1} \psi,     \quad\text{for} ~ (y,t) \in \mathbb{R}^N\times \mathbb{R},
	\end{align}
	where ${\bf{i}}$ is the imagine,  $ p>1$ and $\widetilde V(y)=V(y)-\lambda$. Due to their widely research background in physical sciences such as in nonlinear optics, plasma physics, quantum mechanics and condensed matter, the problem of finding  the existence of nontrivial solutions for \eqref{equation0} or \eqref{equation1} has received a lot of attention in last decades. We refer the readers to \cite{BL,BLi,C,CNY1,CNY2,CDS,DN,Lion} and the references therein for comprehensive results. We also refer the readers to
	\cite{ABC, AoM, AMN1, AMN2, CNY1,CNY2,DY,DF1,DF2,DF3,DF4,DK,DM,FW,KW,Li,LinNW,MPW,NY,R,WY,WY1} for the singular perturbed problems related to the problem \eqref{equation0} or \eqref{equation1}.

	In this paper, we restrict our attention to  the problem \eqref{equation0} under the assumption that the potential function $V(y)>0$ is bounded and radial, which means that $V(y)=V(|y|),$ that is the problem:
	\begin{equation}\label{equation}
		\begin{cases}
			-\Delta u + V(|y|) u = u^p, \quad u>0 & \text{in}\; \mathbb R^N,\\
			u(y) \in H^1(\R^N),
		\end{cases}
	\end{equation}
	where $N\ge 3$ and $p\in \bigg(1, \displaystyle\frac{N+2}{N-2}\bigg)$.
%The functional corresponding to \eqref{equation} is given by
	%\begin{equation}
		%I(u)=\frac12\int_{\mathbb R^N} \bigl(|\nabla u|^2 +V(|y|) u^2\bigr)-\frac1{p+1}\int_{\mathbb R^N} |u|^{p+1},\quad u\in H^1(\mathbb R^N).
	%\end{equation}
	
If $V(|y|)\equiv 1$, it is well known (see \cite{K,NT}) that \eqref{equation} has a unique  positive solution $U(y)$, which is radially symmetric and
	\begin{equation}
		\lim_{|y| \rightarrow +\infty}  U(|y|)  e^{|y|} y^{\frac{N-1}{2}} = C_0<+\infty,  \quad U'(r)<0    \quad\text{and } ~ \lim_{|y| \rightarrow +\infty} \frac{U(y)}{U'(y)}=-1,
	\end{equation}
	where $C_0$ is a constant depends on $N$ and $p$.
	Moreover,  $U(|y|)$ is non-degenerate in the sense that the kernel of the linear operator  $-\Delta + \mathbb{I} -
	pU^{p-1} $  in $H^1(\mathbb{R}^N) $ is spanned by  $$ \bigg\{ \displaystyle\frac{\partial U}{\partial {y_1}  }, \cdots, \displaystyle\frac{\partial U}{\partial {y_N} } \bigg\}.$$

We use the notation:   $U_{x_j}(y)=U(y-x_j).$  Under the assumptions that $V(y)=V(|y|)$ satisfies:

({\bf V}):  There  are constants $a>0$, $\alpha>1$,  and $\beta>0$,
	such that
	
	\begin{equation}\label{V}
		V(r)= 1+\frac a {r^\alpha} +O\bigl(\frac1{r^{\alpha+\beta}}\bigr),
	\end{equation}
	as $r\to +\infty$.
	 Wei and Yan \cite{WY} first constructed infinitely many non-radial positive solutions for \eqref{equation}, which we called bubble solutions, by gluing the bubbles $U_{x_j}$ at the circle of $(x_1,x_2)$-plane:
	\begin{equation}
		x_j=r\Bigl(\cos\frac{2(j-1)\pi}k, \sin\frac{2(j-1)\pi}k,{\bf 0}\Bigr),\quad j=1,\cdots,k,
	\end{equation}
	where ${\bf 0}$ is the zero vector in $\R^{N-2}, r \sim k\ln k,$ as $k \to +\infty.$ This construction is usually called a desingularization of equatorial (see \cite{DM}). Based on this idea, Duan and Musso \cite{DM} consider the existence of the bilayer of bubble solutions, which we called double-tower solutions. This kind of solutions are centered at the points lying on the top and the bottom circles of a cylinder.  More precisely, set $k$ be an integer number and
	\begin{equation*}
		x_{j}^{+}=r\Bigl(\sqrt{1-h^2}\cos\frac{2(j-1)\pi}k, \sqrt{1-h^2}\sin\frac{2(j-1)\pi}k,h,{\bf 0}\Bigr),\quad j=1,\cdots,k,
	\end{equation*}
	and
	\begin{equation*}
		x_{j}^{-}=r\Bigl(\sqrt{1-h^2}\cos\frac{2(j-1)\pi}k, \sqrt{1-h^2}\sin\frac{2(j-1)\pi}k,-h,{\bf 0}\Bigr),\quad j=1,\cdots,k,
	\end{equation*}
	where ${\bf 0}$ is the zero vector in $\R^{N-3}$, $(r,h)\in [r_0k\ln k, r_1k\ln k]\times[\displaystyle\frac{h_0}{k},\displaystyle\frac{h_1}{k}]$ with some constants $r_0,r_1,h_0,h_1>0$.

For any point $y \in \mathbb{R}^{N}$, we set $y = (y',y''),y'\in\mathbb{R}^{3},y''\in\mathbb{R}^{N-3}$. Define
	\[
	\begin{split}
		H_{s}=\bigg\{ u:    u\;\text{is even in} \;&y_2,y_4,y_5,\cdots, y_N,\quad u\left(\sqrt{y_1^2+y_2^2}\cos\theta , \sqrt{y_1^2+y_2^2}\sin\theta,y_3, y''\right)\\
		& =u\left(\sqrt{y_1^2+y_2^2}\cos\left(\theta+\frac{2\pi j}k\right) , \sqrt{y_1^2+y_2^2}\sin\left(\theta+\frac{2\pi j}k\right), y_3,y''\right)
		\bigg\}.
	\end{split}
	\]

	Let
	\[
	W_{r,h}(y)=\sum_{j=1}^k \left(U_{x_j^+}(y)+U_{x_j^-}(y)\right).
	\]
	In \cite{DM}, the following condition is assumed.

	$({\bf V_0})$:  There  are constants $a_0>0$, $m>\displaystyle \max \bigg\{ \frac{4}{p-1},2\bigg\}$,  and $\gamma>0$,
	such that
	
	\begin{equation}\label{V_0}
		V(r)= 1+\frac {a_0} {r^m} +O\bigl(\frac1{r^{m+\gamma}}\bigr),
	\end{equation}
	as $r\to +\infty$. Under this assumption, the double-tower solutions constructed in \cite{DM} are stated as follows:
	
	\iffalse
	\fi
	\begin{thmA}  \label{thmA}
		Suppose that $V(|y|)$ satisfies $({\bf V_0})$. Then there is an
		integer $k_0>0$, such that for any integer  $k\ge k_0$, \eqref{equation}
		has a solution $u_k$ of the form
		\[
		u_k = W_{r_k,h_k}(y)+\omega_k,
		\]
		where  $\omega_k\in H_s\cap  H^1(\R^N)$,
		\begin{equation}
			(r_k,h_k) \in \mathscr{S}_k:= \bigg [\displaystyle{\frac{m-\alpha_1}{2\pi}}k\ln k,\displaystyle{\frac{m+\alpha_1}{2\pi}}k\ln k\bigg]\times \bigg[\displaystyle{\frac{\pi(m+2)-\alpha_2}{m}} \displaystyle{\frac{1}{k}},\displaystyle{\frac{\pi(m+2)+\alpha_2}{m}} \displaystyle{\frac{1}{k}} \bigg],
		\end{equation}
		and as $k\to +\infty$,
		\[
		\int_{\R^N} \bigl(|D \omega_k |^2+\omega_k^2\bigr)\to 0.
		\]
	\end{thmA}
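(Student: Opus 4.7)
The plan is to produce $u_k$ via a finite-dimensional Lyapunov--Schmidt reduction inside $H_s\cap H^1(\R^N)$, with $(r,h)\in\mathscr S_k$ playing the role of the two remaining concentration parameters. Write $u=W_{r,h}+\varphi$ and consider the linearized operator
\[
L_{r,h}\varphi := -\Delta\varphi+V(|y|)\varphi-pW_{r,h}^{p-1}\varphi.
\]
The symmetries defining $H_s$ (the $k$-fold rotation in the $(y_1,y_2)$-plane, the reflection $y_3\mapsto -y_3$, and evenness in the remaining coordinates) collapse the otherwise $2k$-dimensional approximate kernel to the two-dimensional space $K_{r,h}=\mathrm{span}\{\partial_r W_{r,h},\partial_h W_{r,h}\}$. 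The first task is to prove that $L_{r,h}$ is uniformly invertible on the orthogonal complement of $K_{r,h}$ in $H_s\cap H^1(\R^N)$: there exists $C$ independent of $k$ with $\|\varphi\|_{H^1}\le C\|L_{r,h}\varphi\|_{(H^1)^*}$ for every such $\varphi$. This is carried out by contradiction and blow-up: if the inequality failed, rescaling a normalized counterexample $\varphi_k$ about each bubble centre $x_j^\pm$ would produce, along a subsequence, an element of the kernel of $-\Delta+I-pU^{p-1}$. Non-degeneracy of $U$ together with the orthogonality conditions and the $H_s$-symmetries then forces every blow-up limit to vanish, a contradiction.

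With linear theory in hand, write the projected nonlinear equation
\[
L_{r,h}\varphi = N(\varphi)+\ell_{r,h}\quad\text{modulo}\ K_{r,h},
\]
where $N(\varphi)=(W_{r,h}+\varphi)^p-W_{r,h}^p-pW_{r,h}^{p-1}\varphi$ is quadratic in $\varphi$ and the error term is
\[
\ell_{r,h}=W_{r,h}^p-\sum_{j=1}^k\left(U_{x_j^+}^p+U_{x_j^-}^p\right)+\bigl(1-V(|y|)\bigr)W_{r,h}.
\]
Using hypothesis $({\bf V_0})$ together with the asymptotics of $U$, one estimates $\ell_{r,h}$ in a suitable weighted norm by $O(k\,r^{-m})$, dominated by the potential contribution, plus exponentially small interaction terms. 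A contraction mapping argument in a weighted ball of the orthogonal complement of $K_{r,h}$ produces a unique $\omega_k=\omega_k(r,h)$ solving the projected equation, and one shows $\int(|D\omega_k|^2+\omega_k^2)\to 0$ as $k\to\infty$. At this stage $u=W_{r,h}+\omega_k(r,h)$ solves the full equation precisely when $(r,h)$ is a critical point of the reduced functional
\[
F_k(r,h):=I\bigl(W_{r,h}+\omega_k(r,h)\bigr),\qquad I(u)=\tfrac12\int\bigl(|Du|^2+V(|y|)u^2\bigr)-\tfrac1{p+1}\int u^{p+1}.
\]

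The final step is to locate an interior critical point of $F_k$ inside $\mathscr S_k$. Expanding
\[
F_k(r,h)=2k\,I(U)+k\,\Phi_k(r,h)+\text{strictly lower order},
\]
the leading correction $\Phi_k$ has three competing contributions: the potential term $C_1\,a_0\,r^{-m}(1-h^2)^{-m/2}$, the in-circle nearest-neighbour interaction $-C_2\,U\bigl(2r\sin(\pi/k)\bigr)$, and the cross-circle interaction $-C_3\,U(2rh)$ between paired top/bottom bubbles. Substituting the exponential asymptotics of $U$ converts these into terms of respective sizes $r^{-m}$, $e^{-2\pi r/k}(r/k)^{-(N-1)/2}$, and $e^{-2rh}(rh)^{-(N-1)/2}$. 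The rectangle $\mathscr S_k$ is calibrated precisely so that $\nabla\Phi_k$ points outward along $\partial\mathscr S_k$, giving a nonzero Brouwer degree and hence an interior critical point which persists for the full $F_k$ thanks to the smallness of $\omega_k$ and its $(r,h)$-derivatives; positivity of $u_k$ then follows from $W_{r_k,h_k}>0$ and $\|\omega_k\|_{L^\infty}\to 0$. The main obstacle is precisely this energy expansion: one must compute $F_k$ to an order below each of the three competing scales simultaneously and verify the outward-gradient condition on all four sides of $\mathscr S_k$; the restriction $m>\max\{4/(p-1),2\}$ in $({\bf V_0})$ enters exactly here, ensuring that the error produced by the perturbation $\omega_k$ is genuinely lower order than each term in $\Phi_k$.
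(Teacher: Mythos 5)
Your proposal is a correct outline of the standard Lyapunov--Schmidt reduction for this problem, and at the conceptual level it matches what is done in the literature; but it takes a genuinely different route from the proof presented in Section~2 of this paper, and the difference is precisely the one the authors flag as the point of ``revisiting'' Theorem~A. You carry out the linear theory and the contraction mapping in $H_s\cap H^1(\R^N)$, with the coercivity estimate phrased as $\|\varphi\|_{H^1}\le C\|L_{r,h}\varphi\|_{(H^1)^*}$ and the conclusion $\int(|D\omega_k|^2+\omega_k^2)\to 0$. That route does establish Theorem~A as stated (and is essentially the original argument of Duan--Musso). The paper, however, deliberately reruns the whole reduction in the pointwise weighted norm
\[
\|u\|_*=\sup_{y}\frac{|u(y)|}{\sum_j\bigl(e^{-\tau|y-x_j^+|}+e^{-\tau|y-x_j^-|}\bigr)},
\]
using the Green's function representation and barrier-type comparison estimates (Lemma~2.1, Proposition~2.2, Lemma~2.3, Proposition~2.4) to obtain $\|\omega_k\|_*\le Ck^{-\min\{p/2-\tau,1\}m}$. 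This pointwise control is what the subsequent local Pohozaev / blow-up argument for Theorem~1.1 needs: $H^1$-smallness of $\omega_k$ is not enough to estimate the boundary and bulk integrals in Lemma~3.3, which require $|\omega_k|$ and $|\nabla\omega_k|$ uniformly small in annular regions between the bubbles. So your approach buys a cleaner, more classical variational reduction, while the paper's buys the quantitative pointwise decay that is actually used downstream.

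Two small secondary points. First, the ``reduced kernel'' in the symmetric space is implemented in the paper not as an orthogonal projection onto $\mathrm{span}\{\partial_r W,\partial_h W\}$, but via orthogonality to the $4k$ functions $U_{x_j^\pm}^{p-1}\overline{\mathbb{Z}}_{\ell j}$, $U_{x_j^\pm}^{p-1}\underline{\mathbb{Z}}_{\ell j}$ with only two Lagrange multipliers $c_{k\ell}$, $\ell=1,2$, by symmetry; your phrasing is equivalent in spirit but worth making precise if you want the $c_{k\ell}$ estimates of \eqref{estimateforc}. Second, the potential contribution to the reduced energy is $\sim a_0\,r^{-m}$ with no $(1-h^2)^{-m/2}$ factor, since $|x_j^\pm|=r$ exactly; this does not affect the leading balance because $h\sim 1/k$, but your formula for $\Phi_k$ should be corrected. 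The max/min (or outward-gradient / Brouwer degree) argument you use to locate $(r_k,h_k)$ in the interior of $\mathscr S_k$ coincides with the paper's Proposition~2.5 and the discussion after it.
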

	
	Of course, by suing the similar arguments, we can construct similar double-tower solutions  by gluing two layers $n$ bubbles $U_{x_i}$ at the circle of $(x_3,x_4)$-plane with height $\tilde{h}$:
\begin{equation*}
		x_{i}^{+}=\tilde{r}\Bigl(0, 0, \sqrt{1-\tilde{h}^2}\cos\frac{2(i-1)\pi}n, \sqrt{1-\tilde{h}^2}\sin\frac{2(i-1)\pi}n, \tilde{h}, {\bf 0}\Bigr),\quad i=1,\cdots,n,
	\end{equation*}
	and
	\begin{equation*}
		x_{i}^{-}=\tilde{r}\Bigl(0, 0, \sqrt{1-\tilde{h}^2}\cos\frac{2(i-1)\pi}n, \sqrt{1-\tilde{h}^2}\sin\frac{2(i-1)\pi}n,-\tilde{h},{\bf 0}\Bigr),\quad i=1,\cdots,n,
	\end{equation*}
	where ${\bf 0}$ is the zero vector in $\R^{N-5}$, $(\tilde{r},\tilde{h})\in [{\tilde{r}_0}n\ln n,{\tilde{r}_1}n\ln n]\times[\displaystyle\frac{\tilde{h}_0}{n},\displaystyle\frac{\tilde{h}_1}{n}]$ with some constants $\tilde{r}_0,\tilde{r}_1,\tilde{h}_0,\tilde{h}_1>0$.
Our question is can we obtain new type solutions for \eqref{equation} whose main order is like the sum of the two double-tower solutions which  located at the cylinder in  the $(x_1, x_2) $-plane and the $(x_3, x_4) $-plane separately? By carefully analysis, we find that if the number of bubbles, that is $n, k$ are different orders, it is very hard to finish the construction process as before. Motivated by the work  Guo, Musso, Peng and Yan \cite{GMPY1}, instead of the construction directly, by  using  local Pohozaev identities,  we first prove  a non-degeneracy result for  the so called double-tower solutions constructed in \cite{DM}, then we proceed a gluing argument to obtain the existence of  new type solutions.  We would like to mention that similar idea and method  are used  in \cite{GH,GHLN,GMPY} for the study of the prescribed scalar curvature problem. However, compared with the prescribed scalar curvature equation,  the study of non-degeneracy for nonlinear Schr\"odinger equations is quite difficult, for the reason that the critical point of $V(|y|)$ is located at infinity, and the decay of the solution $U(|y|)$ is exponential rather than polynomial.  In fact, non-degeneracy result is quite important in further study of the existence and other related problems, we refer the reader to \cite{DMW,GGH,MM} and references therein for any other interesting  related results.
	
%\iffalse\fi
	
	Before the statement of our theorems, we make the following assumptions on the potential function $V(|y|)$:

		$({\bf V_1}): V(r)= 1+\displaystyle\frac {a_1} {r^m} +\displaystyle\frac {a_2} {r^{m+1}} +O\bigl(\frac1{r^{m+2}}\bigr),$

		$({\bf V_2}): V'(r)= -\displaystyle\frac {a_1m} {r^{m+1}} -\displaystyle\frac {a_2(m+1)} {r^{m+2}} +O\bigl(\frac1{r^{m+3}}\bigr),$
where  $m>\max\bigg\{\displaystyle\frac 4{p-1}, 4\bigg\}$, $a_1>0$  and $a_2$ are some   constants. 

Meanwhile,
	for a function  $u(y) \in H_s\cap  C(\R^N)$, we introduce  the following norm:
	\begin{equation}\label{norm}
		\| u\|_*= \sup_{y\in \mathbb R^N} \frac{|u(y)|}{\sum_{j=1}^k \left(e^{-\tau |y-x_j^{+}|}+e^{-\tau |y-x_j^{-}| }\right)},
	\end{equation}
	where $\tau$ is any fixed small constant.
	
	\medskip
	
	Here are the  main results of our paper.

	\begin{theorem}\label{th11}
		
		Suppose that $V(|y|)$ satisfies $({\bf{V_1}})$  and  $({\bf{V_2}})$, then there exist a large constant $k_0>0$, such that for any integer $k>k_0$, the double-tower solution $u_k$ obtained in ${ Theorem~A}$ is non-degenerate in the sense that if $\xi\in H_s\cap H^1(\mathbb R^N)$ is a solution of the following equation:
		\begin{equation}\label{2-22-12}
			L_k \xi:= -\Delta \xi + V(|y|) \xi -p u_k^{p-1}\xi  \quad \text{in} \ \R^N,
		\end{equation}
		then $\xi=0$.
		
	\end{theorem}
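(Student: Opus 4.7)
The plan is a contradiction argument in the spirit of Guo--Musso--Peng--Yan \cite{GMPY1}. Suppose there is a non-trivial $\xi_k\in H_s\cap H^1(\R^N)$ solving $L_k\xi_k=0$ and normalized by $\|\xi_k\|_*=1$. Introduce the approximate kernel elements
\[
Z_{j,\ell}^{\pm}(y):=\frac{\partial U_{x_j^{\pm}}}{\partial y_\ell},\qquad \ell=1,3,\ j=1,\dots,k,
\]
and decompose $\xi_k=\psi_k+\eta_k$ with $\psi_k$ in the span of the $Z_{j,\ell}^{\pm}$'s and $\eta_k$ orthogonal in a suitable weighted inner product. The $H_s$-symmetry (cyclic rotation by $2\pi/k$ in the $(y_1,y_2)$-plane and evenness in $y_2,y_4,\dots,y_N$) forces $\psi_k=\sum_j\sum_{\epsilon\in\{+,-\}}\bigl(b_1^{\epsilon}Z_{j,1}^{\epsilon}+b_3^{\epsilon}Z_{j,3}^{\epsilon}\bigr)$, leaving only four real parameters $b_1^{\pm},b_3^{\pm}$; the directions $\partial_{y_i}U$ for $i\in\{2,4,\dots,N\}$ drop out because each is odd in $y_i$.

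The next task is to show $\|\eta_k\|_*=o(1)$. Since $\eta_k$ is orthogonal to the approximate kernel and $L_k\eta_k$ reduces to the projection of $L_k\psi_k$ onto the orthogonal complement, the linear invertibility theory developed in \cite{DM} for the construction of $u_k$ applies, in conjunction with a blow-up analysis at each $x_j^{\pm}$ and the non-degeneracy of $U$. This reduces the problem to showing $b_1^{\pm},b_3^{\pm}\to 0$.

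For this, I would use local Pohozaev identities. Differentiating the equation for $u_k$ gives $L_k(\partial_{y_\ell}u_k)=-V'(|y|)\,(y_\ell/|y|)\,u_k$, so multiplying $L_k\xi_k=0$ by $\partial_{y_\ell}u_k$ and applying Green's identity on a domain $\Omega$ yields
\[
\int_{\Omega}V'(|y|)\,\frac{y_\ell}{|y|}\,u_k\,\xi_k\,dy=\int_{\partial\Omega}\bigl[\xi_k\,\partial_\nu(\partial_{y_\ell}u_k)-\partial_{y_\ell}u_k\,\partial_\nu\xi_k\bigr]\,dS.
\]
Choosing $\Omega$ to isolate the upper layer (either the half-space $\{y_3>0\}$ or a union of balls of radius $\sim\ln k$ centered at the $x_j^+$), and then likewise for the lower layer, and running $\ell\in\{1,3\}$, produces a linear system
\[
A\,(b_1^+,b_3^+,b_1^-,b_3^-)^T=o(1),
\]
whose matrix $A$ is, to leading order, block-diagonal in the $\pm$ sign because the cross-layer integrals carry an extra factor $e^{-2r_kh_k}=k^{-(m+2)}$. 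Each $2\times 2$ diagonal block comes from Taylor expanding $V_{y_\ell}(|y|)$ at $x_1^{\pm}$: the constant term kills $\int U\,\partial_{y_m}U=0$ by parity, and the linear term gives a non-zero multiple of the $(y_1,y_3)$-submatrix of the Hessian of $V$. Its determinant computes to $V''(r)\,V'(r)/r\sim-m^2(m+1)a_1^2/r^{2m+4}$, which is non-zero under $(\mathbf{V_1})$ since $a_1>0$. Hence $b_1^{\pm},b_3^{\pm}=o(1)$, contradicting $\|\xi_k\|_*=1$.

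The principal obstacle is the Pohozaev step: matching the precision of the Taylor expansions of $V$, of $V_{y_\ell}$, and of the solution $u_k$ itself so that the leading entries of $A$ are identified correctly and every error contribution -- from $\eta_k$, from higher-order Taylor remainders, and from the overlap between the two logarithmically-separated layers -- is beaten by the dominant diagonal entries. This is why the refined hypotheses $(\mathbf{V_1})$ and $(\mathbf{V_2})$, which carry an additional $a_2/r^{m+1}$ correction in $V$ together with its derivative, are imposed; they provide exactly the precision required. The overall bookkeeping follows the template used in \cite{GMPY},\cite{GMPY1},\cite{GH} for the prescribed scalar curvature problem, adapted here to the exponentially decaying bubble $U$.
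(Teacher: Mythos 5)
Your general architecture — contradiction with $\|\xi_k\|_*=1$, decomposition into approximate kernel directions plus a small remainder, and local Pohozaev identities to kill the remaining scalar parameters — matches the paper's. But the decisive step, identifying the dominant balance inside the Pohozaev identity, is inverted, and this is exactly where the Schr\"odinger problem differs from the prescribed scalar curvature template you cite. You propose to read off the non-degenerate $2\times 2$ block from the volume integral $\int_\Omega u_k\xi_k\,\partial_{y_\ell}V$, via the $(y_1,y_3)$ Hessian submatrix $\mathrm{diag}(V''(r),V'(r)/r)$ with determinant $\sim a_1^2/r^{2m+4}$, while relegating the boundary contributions and the bubble--bubble overlaps to the error column. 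This cannot work: the volume term has entries of order $1/r_k^{m+2}$, whereas the boundary bilinear form $I_\ell(u_k,\xi_k,\Omega)$ on $\Omega=B_{\frac12|x_2^+-x_1^+|}(x_1^+)$ carries the nearest-neighbour interactions $x_1^+\!\leftrightarrow x_2^+$ and $x_1^+\!\leftrightarrow x_1^-$, giving coefficients of order $U(|x_2^+-x_1^+|)/k^2\sim U(|x_1^+-x_1^-|)\sim\frac{1}{k\,r_k^{m+1}}=\frac{r_k}{k}\cdot\frac{1}{r_k^{m+2}}$, and $r_k/k\sim\ln k\to\infty$. So the boundary/interaction terms \emph{dominate} the Hessian term (this is precisely (40) and the balance relations (B.1), (B.13) in the paper). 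They are not errors: they are linear in $(b_{1n},b_{3n})$ and produce the actual non-degenerate matrix, whose entries are the interaction constants $D_1,\dots,D_4$ of (36)--(37), not the Hessian of $V$. Your own estimate of the cross-layer size $e^{-2r_kh_k}\sim k^{-(m+2)}$ already exceeds $1/r_k^{m+2}\sim(k\ln k)^{-(m+2)}$ by a factor $(\ln k)^{m+2}$, which should have been a red flag. This is exactly why no non-degeneracy hypothesis on the Hessian of $V$ appears in the theorem: with the bubbles escaping to infinity at rate $r_k\sim k\ln k$, the gradient of $V$ at the bubble locations is too weak, and the mechanism that closes the argument is the exponentially decaying bubble--bubble interaction.

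Two secondary points. First, you introduce four parameters $b_1^{\pm},b_3^{\pm}$, while the paper uses only two, attached to $\partial_rW$ and $r^{-1}\partial_hW$ (which automatically couple the two layers), justified by Lemma 3.1; if you keep four you must separately treat both parities in $y_3$, since $H_s$ imposes no symmetry there. Second, the domain choice matters: a single ball around $x_1^+$ (as in the paper) is what makes the boundary bilinear form pick up the dominant interaction contributions cleanly; a half-space $\{y_3>0\}$ would change the boundary geometry and the leading balance would have to be re-derived from scratch.
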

	
	\iffalse
	Similar problem is studied for the following prescribed scalar curvature equation
	
	\begin{equation}\label{1-13-5}
		- \Delta u =K(y)u^{2^*-1},\quad u>0 \ \ \   \mbox{in} \  \R^N, \ \
		u \in D^{1, 2}(\R^N).
	\end{equation}
	where $2^* = {2N \over N-2}$, under the condition that $K(r)$ has a non-degenerate local maximum point $r_0>0$. There are
	some technical differences in the study of the non-degeneracy of solutions for the nonlinear Schr\"odinger equations and
	the prescribed scalar curvature equation. For the prescribed scalar curvature equation, the non-degeneracy of the local maximum point $r_0>0$
	plays an essential role in using the local Pohozaev identities to kill the possible nontrivial kernel of the linear operator.
	For the nonlinear Schr\"odinger equations, we can regard  condition \eqref{V2} as $V$ has a critical point at infinity. But it is
	not clear that such an expansion implies certain non-degeneracy of $V$ at infinity. This difference, together with the exponential decay
	of the solution $U$ of \eqref{10-18-4}, leads us to use the local Pohozaev identities in quite different ways.
	\fi
	
	As an application of  Theorem~\ref{th11}, we have the following new  existence  result for \eqref{equation}. Assume that $N\ge 6$,  for any large integer $n>0$, define
	
	\begin{equation*}
		p_{j}^{+}=t\Bigl(0, 0, 0,\sqrt{1-l^2}\cos\frac{2(j-1)\pi}n, \sqrt{1-l^2}\sin\frac{2(j-1)\pi}n,l,{\bf 0}\Bigr),\quad j=1,\cdots,n,
	\end{equation*}
	and
	\begin{equation*}
		p_{j}^{-}=t\Bigl(0, 0, 0,\sqrt{1-l^2}\cos\frac{2(j-1)\pi}n, \sqrt{1-l^2}\sin\frac{2(j-1)\pi}n,-l,{\bf 0}\Bigr),\quad j=1,\cdots,n,
	\end{equation*}
	where ${\bf 0}$ is the zero vector in $\R^{N-6}, (t,l)\in [t_0n\ln n, t_1n\ln n]\times[\displaystyle\frac{l_0}{n},\displaystyle\frac{l_1}{n}]$ with some constants $t_0,t_1,l_0,l_1>0$. Let
	\[
	\begin{split}
		X_s=\Bigg\{ u:  u\in H_s, u\;& \text{is even in} \;y_h, h=1,\cdots,N, \ u\left(y_1, y_2,y_3, \sqrt{y_4^2+y_5^2}\cos\theta , \sqrt{y_4^2+y_5^2}\sin\theta,y_6, y^*\right)\\
		& =
		u\left(y_1,y_2,y_3,\sqrt{y_4^2+y_5^2}\cos\left(\theta+\frac{2\pi j}n\right) , \sqrt{y_4^2+y_5^2}\sin\left(\theta+\frac{2\pi j}n\right),y_6, y^*\right)
		\Bigg\}.
	\end{split}
	\]
	Here $y^*=(y_7,\cdots,y_N) \in \R^{N-6}.$ Then we have

	\begin{theorem}\label{th1.2}
		
		Suppose that $N\ge 6$ and  $V(|y|)$ satisfies the assumptions in {Theorem~\ref{th11}}. Let $u_k$ be a solution in { Theorem~A} and $k>0$ is a fixed large even number. Then there is an
		integer $n_0>0$, depending on $k$, such that for any even number  $n\ge n_0$,  \eqref{equation}
		has a solution of the form
		\begin{equation}\label{sol}
			u_{k,n}= u_k +\sum_{j=1}^n \left(U_{p_j^{+}}+U_{p_j^{-}}\right)+\xi_n,
		\end{equation}
		where  $\xi_n\in X_s\cap  H^1(\R^N),$
		\begin{equation}
			(t,l) \in \mathscr{T}_k:= \bigg [\displaystyle{\frac{m-\beta_1}{2\pi}}n\ln n,\displaystyle{\frac{m+\beta_1}{2\pi}}n\ln n\bigg]\times \bigg[\displaystyle{\frac{\pi(m+2)-\beta_2}{m}} \displaystyle{\frac{1}{n}},\displaystyle{\frac{\pi(m+2)+\beta_2}{m}} \displaystyle{\frac{1}{n}} \bigg],
		\end{equation}
		and as $n\to +\infty$,
		\[
		\int_{\R^N} \bigl(|D \xi_k |^2+\xi_k^2\bigr)\to 0.
		\]
	\end{theorem}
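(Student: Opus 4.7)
The plan is a Lyapunov--Schmidt reduction in the symmetric space $X_s$ with $u_k$ frozen as a rigid ``background'' on top of which a second double-tower is glued. Theorem~\ref{th11} enters in an essential way: it guarantees that the linearization at $u_k+\Phi_{t,l}$ has no hidden kernel coming from $u_k$ itself.

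First, I would set up the ansatz $u = u_k + \Phi_{t,l} + \xi$ where
\[
\Phi_{t,l}(y)=\sum_{j=1}^n \left(U_{p_j^{+}}(y)+U_{p_j^{-}}(y)\right),
\]
$(t,l)\in \mathscr{T}_k$, and $\xi\in X_s\cap H^1(\R^N)$ is a small correction. Because the new centres $p_j^{\pm}$ lie in the $(y_4,y_5,y_6)$-block at distance $t\sim n\ln n$ while $u_k$ concentrates in the $(y_1,y_2,y_3)$-block at distance $r_k\sim k\ln k$, once $n\gg k$ the overlap between $u_k$ and each $U_{p_j^{\pm}}$ is exponentially small in $n$. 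The approximate kernel in $X_s$ of
\[
\mathcal{L}_{t,l}\xi := -\Delta\xi+V(|y|)\xi - p\bigl(u_k+\Phi_{t,l}\bigr)^{p-1}\xi
\]
is, up to $o(1)$ errors, the two-dimensional span of $Z_1=\partial_t \Phi_{t,l}$ and $Z_2=\partial_l \Phi_{t,l}$: all other natural kernel directions are frozen by the evenness and the cyclic rotation in the $(y_4,y_5)$-plane built into $X_s$, and the kernel modelled on $u_k$ is excluded by Theorem~\ref{th11}.

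The core step is the a priori estimate $\|\xi\|_{**}\le C\|\mathcal{L}_{t,l}\xi\|_{***}$ for $\xi\in X_s$ orthogonal in a suitable weighted sense to $Z_1,Z_2$, where the weighted norms are the natural analogues of \eqref{norm} with centres at the $p_j^{\pm}$'s and an exponential tail that uniformly dominates $u_k$ off its support. I would argue by contradiction and concentration analysis: a violating sequence rescales either (i) around some $p_j^{\pm}$, producing a bounded solution of $-\Delta\eta+\eta=pU^{p-1}\eta$ on $\R^N$ that is killed by the $X_s$-symmetry and the orthogonality to $Z_1,Z_2$; or (ii) in the region where $u_k$ lives, yielding a nontrivial $\eta\in H_s\cap H^1(\R^N)$ with $L_k\eta=0$, which is forbidden by Theorem~\ref{th11}. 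This simultaneous handling of two very different concentration scales, with $n$ chosen after $k$, is the main technical obstacle.

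With invertibility in hand, a standard contraction in $\|\cdot\|_{**}$ produces a unique $\xi=\xi_{t,l}\in X_s$ solving the projected nonlinear equation, with $\|\xi_{t,l}\|_{**}\to 0$ uniformly in $(t,l)\in\mathscr{T}_k$ as $n\to\infty$. Setting $I(u)=\tfrac12\int(|\nabla u|^2+Vu^2)-\tfrac{1}{p+1}\int|u|^{p+1}$, the separation estimate together with $({\bf V_1})$--$({\bf V_2})$ gives the expansion
\[
F_n(t,l):=I\bigl(u_k+\Phi_{t,l}+\xi_{t,l}\bigr) = I(u_k) + J_n(t,l) + o\!\left(n^{-m}\right),
\]
where $J_n$ is, up to an additive constant, the one-tower reduced energy from the proof of Theorem~A with $(k,r,h)$ replaced by $(n,t,l)$. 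The interior critical point of $J_n$ in $\mathscr{T}_k$ established in \cite{DM} persists under the $C^1$-small perturbation, yielding $(t_n,l_n)\in \mathscr{T}_k$ and hence $u_{k,n}$; positivity follows from the maximum principle since $\|\xi_{t_n,l_n}\|_\infty\to 0$, and the $H^1$ decay of $\xi_n$ is inherited from the weighted $\|\cdot\|_{**}$-decay.
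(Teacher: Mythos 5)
Your plan is sound and captures the two essential ingredients — the Lyapunov--Schmidt reduction glued onto the fixed background $u_k$, and the use of Theorem~\ref{th11} to rule out hidden kernel coming from $u_k$ in the concentration-compactness step — but the functional-analytic framework you choose differs from the paper's. You propose carrying out the second reduction in a weighted pointwise norm $\|\cdot\|_{**}$ adapted simultaneously to the new centres $p_j^{\pm}$ and to the spikes of $u_k$, with a companion norm $\|\cdot\|_{***}$ for the right-hand side. The paper instead performs the entire second reduction in the plain energy norm $H^1(\mathbb{R}^N)$: Lemma~4.1 normalizes $\|\xi_n\|_{H^1}^2 = n$, estimates the Lagrange multipliers $c_{n\ell}$ via the constraint orthogonality, passes to the weak limit of $n^{-1/2}\xi_n$, shows the limit $\xi$ solves $L_k\xi = 0$ (the $Y_{t,l}$-contribution vanishes since $\xi_n(\cdot + p_{n,j}^{\pm}) \to 0$ in $H^1_{\mathrm{loc}}$ by the $\mathbb{F}_n$-orthogonality), and kills $\xi$ by Theorem~\ref{th11}; the contradiction then comes from testing the linearized equation against $\xi_n$ itself. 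What the $H^1$ framework buys is that one never has to design a single weight which uniformly dominates both exponential tails $\sum e^{-\tau|y-p_j^{\pm}|}$ and the $k$ spikes of $u_k$ sitting at a completely different scale $r_k \sim k\ln k$ — a delicate matching you acknowledge as ``the main technical obstacle.'' What your weighted approach would buy, if carried out, is a pointwise $\|\xi_n\|_{L^\infty}\to 0$ for free, which makes the positivity of $u_{k,n}$ by the maximum principle immediate, whereas the $H^1$ route needs an extra elliptic-regularity bootstrap for that last point. Your step (ii) of the concentration dichotomy is exactly where the paper applies Theorem~\ref{th11}, and your energy expansion $F_n = I(u_k) + J_n(t,l) + o(n^{-m})$ matches Proposition~4.6 of the paper; so the remaining steps agree.
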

	\medskip
	
	\begin{remark} Note that in the construction of Theorem A, the correction term is goes to zero in the sense that
\[
		\int_{\R^N} \bigl(|D \omega_k |^2+\omega_k^2\bigr)\to 0.
		\]
In order to prove its non-degeneracy, we need the point-wise estimate. So we have to revisit the proof of Theorem A such that the correction term goes to zero  under the norm  $||\cdot ||_*.$ 
\end{remark}

	\begin{remark}
		We would like to mention that, in general, the idea and method we developed here can be applied to construct continuously newer solutions to \eqref{equation}. For example,  we can prove the non-degeneracy of new double-tower solutions $u_{k,n}$ which are constructed in {Theorem \ref{th1.2}} and apply it as $N\ge 9$, to seek for newer solutions of the form
		\begin{equation}
			u\approx u_{k,n} +\sum_{j=1}^n \left(U_{q_j^{+}}+U_{q_j^{-}}\right),
		\end{equation}
		with the points $q_j^{+}$ and $q_j^{-}$ are defined in $\R^N$ as
		\begin{equation*}
			q_{j}^{+}=t'\Bigl(0, 0, 0,0,0,0,\sqrt{1-l'^2}\cos\frac{2(j-1)\pi}n, \sqrt{1-l'^2}\sin\frac{2(j-1)\pi}n,l',{\bf 0}\Bigr),\quad j=1,\cdots,n',
		\end{equation*}
		\begin{equation*}
			q_{j}^{-}=t'\Bigl(0, 0, 0,0,0,0,\sqrt{1-l'^2}\cos\frac{2(j-1)\pi}n, \sqrt{1-l'^2}\sin\frac{2(j-1)\pi}n,-l',{\bf 0}\Bigr),\quad j=1,\cdots,n',
		\end{equation*}
		where ${\bf 0}$ is the zero vector in $\R^{N-9}$. And so on.
	\end{remark}
	
	The paper is organized as follows. In Section~2, we will revisit the proof of { Theorem~A} in order to obtain a point-wise estimate for the correction function under the norm we defined in \eqref{norm}. Such estimate is necessary in the proof of { Theorem~\ref{th11}}. Also, we give the energy expansion of the solution, which is essential in constructing the new solutions in Section~4. In Section~3, we using the local Pohozaev identities to obtain the non-degeneracy of the solutions. Section~4 is devoted to the construction of new type  solutions to \eqref{equation}. We put the establishment of the local Pohozaev identities and some basic estimates in Appendices \ref{appendixA}-\ref{appendixB}.

	\section{Revisit the existence of double-tower solutions}\label{sec2}
	To get the non-degeneracy of double-tower solutions, we need point-wise estimate for the correction function. So we first  briefly revisit the existence of double-tower solutions for problem \eqref{equation} under the norm in \eqref{norm}, and then we calculate the expansion for the energy functional. We use the same notations as in Section 1.
	
	We consider the linearized problem:
	\begin{align}\label{lin}
		\begin{cases}
			-\Delta {\phi}+V(|y|)\phi-p{W_{r,h}^{p-1}}\phi =f+\sum\limits_{j=1}^k\sum\limits_{\ell=1}^2 c_{k\ell} \Big( U_{x^{+}_{j}}^{p-1}\overline{\mathbb{Z}}_{\ell j}+ U_{x^{-}_{j}}^{p-1}\underline{\mathbb{Z}}_{\ell j}\Big)
			\;\;
			\text{in}\;
			\R^N,
			\\[2mm]
			\phi\in \mathbb{E}_k,
		\end{cases}
	\end{align}
	for some constants $c_{k\ell}, \ell=1,2 $, where the functions $\overline{\mathbb{Z}}_{\ell j}$ and $\underline{\mathbb{Z}}_{\ell j}$ are given by
	\begin{align*}
		\overline{\mathbb{Z}}_{1j}=\frac{\partial U_{x^{+}_{j}}}{\partial r},
		\qquad
		\underline{\mathbb{Z}}_{1j}=\frac{\partial U_{x^{-}_{j}}}{\partial r},
		\qquad
		\overline{\mathbb{Z}}_{2j}=\frac{\partial U_{x^{+}_{j}}}{\partial h},
		\qquad
		\underline{\mathbb{Z}}_{2j}=\frac{\partial U_{x^{-}_{j}}}{\partial h}.
	\end{align*}
	for $ j= 1, \cdots, k $. Moreover the function $\phi$ belongs to the set $\mathbb{E}_k$  which is given by
	\begin{align}\label{SpaceE}
		\mathbb{E}_k= \Big\{\phi:  \phi\in H_s \cap {C(\R^N)}, \quad \big<U_{x^{+}_{j}}^{p-1}  \overline{\mathbb{Z}}_{\ell j} ,\phi \big> = \big<U_{x^{-}_{j}}^{p-1}  \underline{\mathbb{Z}}_{\ell j} ,\phi \big> = 0,  \ j= 1, \cdots, k, \ \ell= 1,2\Big\},
	\end{align}
	where $\big<u,v\big> = \displaystyle\int_{\R^N}{u v} .$
	\medskip
	
	\begin{lemma}\label{lem2.1}
		Suppose that $\phi_{k}$ solves \eqref{lin} for $f=f_{k}$. If $\| f_k\|_* \to 0$  as $k\to +\infty$, so does  $\|\phi_{k}\|_{{*}}$.
	\end{lemma}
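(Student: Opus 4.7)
The plan is a standard contradiction/blow-up argument. Suppose by contradiction that there exist sequences $f_k$ with $\|f_k\|_*\to 0$ and $\phi_k$ solving \eqref{lin} with constants $c_{k\ell}$, but $\|\phi_k\|_*\not\to 0$. Normalizing, I may assume $\|\phi_k\|_*=1$.

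First I would estimate the multipliers $c_{k\ell}$. Testing \eqref{lin} against $\overline{\mathbb{Z}}_{\ell j}$ (resp.\ $\underline{\mathbb{Z}}_{\ell j}$) and exploiting the orthogonality in $\mathbb{E}_k$, together with the fact that $\overline{\mathbb{Z}}_{\ell j}$ nearly solves the linearized equation at $U_{x_j^+}$ (with error governed by $V-1$ and the interaction with other bubbles), I expect to obtain estimates of the form $|c_{k\ell}|\le C(\|f_k\|_* + o(1)\|\phi_k\|_*)$, using the bubble-interaction decay estimates which are standard in this context and are collected in the appendix referenced in the excerpt.

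Next I would set up a pointwise barrier. Writing $\Psi(y):=\sum_{j=1}^k\bigl(e^{-\tau|y-x_j^+|}+e^{-\tau|y-x_j^-|}\bigr)$ for small $\tau\in(0,1)$, a direct computation gives
\begin{equation*}
-\Delta\Psi+\tfrac12 V(|y|)\Psi\ \ge\ 0\quad\text{outside }\ \bigcup_j\bigl(B_R(x_j^+)\cup B_R(x_j^-)\bigr)
\end{equation*}
for $R$ large enough, while $pW_{r,h}^{p-1}\ll V$ in that region. A maximum-principle comparison applied to the equation satisfied by $\phi_k$ (after absorbing the $c_{k\ell}$-terms, which are already $o(1)$) then yields
\begin{equation*}
\|\phi_k\|_*\ \le\ C\bigl(\|f_k\|_* + o(1) + \max_{j,\,\epsilon\in\{+,-\}}\|\phi_k\|_{L^\infty(B_R(x_j^\epsilon))}\bigr).
\end{equation*}
By the symmetry enforced by $H_s$, all of the $2k$ local $L^\infty$-norms coincide, so the right-hand side reduces to $C\|\phi_k\|_{L^\infty(B_R(x_1^+))}+o(1)$. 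Hence for a contradiction it suffices to show this local norm tends to zero.

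For this I perform the standard blow-up: set $\tilde\phi_k(z)=\phi_k(z+x_1^+)$. Then $|\tilde\phi_k|\le 1$ on $B_R(0)$, and $\tilde\phi_k$ satisfies
\begin{equation*}
-\Delta\tilde\phi_k + V(|z+x_1^+|)\tilde\phi_k - pW_{r,h}^{p-1}(z+x_1^+)\tilde\phi_k \ =\ \tilde f_k + \text{multiplier terms}.
\end{equation*}
Since $|x_1^+|\to\infty$ gives $V(|z+x_1^+|)\to 1$ uniformly on compact sets, and the interaction with all other bubbles decays in $L^\infty_{\mathrm{loc}}$ (their centers are at distance at least of order $\ln k$ from $x_1^+$), elliptic regularity gives a subsequential limit $\tilde\phi_\infty\in L^\infty\cap H^1_{\mathrm{loc}}$ solving
\begin{equation*}
-\Delta\tilde\phi_\infty + \tilde\phi_\infty - pU^{p-1}\tilde\phi_\infty=0\quad\text{in }\R^N.
\end{equation*}
By non-degeneracy, $\tilde\phi_\infty=\sum_{i=1}^N a_i\partial_{y_i}U$. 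The orthogonality conditions in \eqref{SpaceE} pass to the limit and kill the components along $\partial_r U$ and $\partial_h U$, while the $H_s$-symmetries (evenness in the transverse variables and the $2\pi/k$-rotation invariance) force all remaining coefficients to vanish. Thus $\tilde\phi_\infty\equiv 0$, contradicting that the local $L^\infty$-norm was bounded away from zero.

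The main obstacle I anticipate is the barrier step: because adjacent centers sit at distance only of order $\ln k$, the sum $\Psi$ is not small in the gaps between bubbles, and the bubble-tower interactions (especially between $x_j^+$ and $x_j^-$, whose separation $2rh\sim\ln k$ is the shortest in the configuration) must be carefully tracked so that the maximum principle still yields the needed $\|\phi_k\|_*\le C\|\phi_k\|_{L^\infty(\cup B_R(x_j^\pm))}+o(1)$ bound. Everything else is standard once that estimate is in hand, with the role of $H_s$ being essential to reduce the $2k$ local estimates to a single one and to kill the rotational kernel in the limit.
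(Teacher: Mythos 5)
Your proposal is correct and follows essentially the same strategy as the paper: contradiction with $\|\phi_k\|_*=1$, estimate the multipliers $c_{k\ell}$ by testing against $\overline{\mathbb{Z}}_{q1}$, establish a weighted pointwise bound that reduces the $*$-norm to the local $L^\infty$-norms near the bubbles, and kill those by a blow-up argument via the non-degeneracy of $U$ and the orthogonality/symmetry constraints. The one technical variation is that you obtain the pointwise bound by a maximum-principle comparison against the supersolution $\Psi=\sum_j\bigl(e^{-\tau|y-x_j^+|}+e^{-\tau|y-x_j^-|}\bigr)$, whereas the paper uses the Green's-function integral representation $\phi(x)=\int G(y,x)\bigl(pW_{r,h}^{p-1}\phi+f+\sum c_{k\ell}\cdots\bigr)\,dy$; the two are equivalent here (your worry about $\Psi$ in the gaps is unnecessary, since what matters is the operator inequality $(-\Delta+V-pW^{p-1})\Psi\ge c\Psi$ outside the balls, not smallness of $\Psi$), and both yield the same inequality \eqref{100-27-2}. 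Your blow-up step actually spells out what the paper leaves as a one-line assertion (``Since $\phi\in\mathbb{E}_k$, $\|\phi\|_{L^\infty(B_R(x_j^+))}\to 0$''); one small imprecision is that the remaining $\partial_{y_2},\partial_{y_4},\dots,\partial_{y_N}$-components are killed by the evenness in $y_2,y_4,\dots,y_N$ alone (the $2\pi/k$-rotation symmetry is not what is used at the fixed center $x_1^+$), and you should note that the bounded limit $\tilde\phi_\infty$ automatically decays and hence lies in $H^1(\R^N)$, so that the non-degeneracy of $U$ applies.
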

	\begin{proof}
		
		We prove it by contradiction. Suppose that there exists a sequence of  $({r_k}, {h_k})\in {{\mathscr S}_k}$, and $(\phi_k,c_{k\ell}),\ \ell=1,2$ solving \eqref{lin} with  $f=f_k,r= {r_k}, h= {h_k}$, such that  $\Vert f _{k}\Vert_{{*}}\to 0 $, and  $\|\phi_k\|_{{*}}\ge c'>0$ as $k \to + \infty$. Without loss of generality, we may assume that  $\|\phi_k\|_{{*}}=1$. For simplicity, we drop the subscript $k$.
		
		Using the Green's function $G(y, x)$ of $-\Delta+V(|y|)$ in $\mathbb{R}^N$, we have
		
		\begin{equation}\label{2-l21}
			\phi(x) =\int_{\mathbb R^N} G(y, x) \bigl(p W_{r,h}^{p-1} \phi +f +\sum\limits_{j=1}^k\sum\limits_{\ell=1}^2 c_{k\ell} \bigg( U_{x^{+}_{j}}^{p-1}\overline{\mathbb{Z}}_{\ell j}+ U_{x^{-}_{j}}^{p-1}\underline{\mathbb{Z}}_{\ell j}\Big)\bigg),
		\end{equation}
		and
		\begin{equation*}
			0< G(y, x) \le  C e^{-|y-x|},
		\end{equation*}
		as $|y-x| \to +\infty,$
		\begin{equation*}
			0< G(y, x) \le  \frac{C}{|y-x|^{N-2}},
		\end{equation*}
		as $|y-x| \to 0.$
		
		By  calculation, we have for $\tau>0$ small,
		\begin{align*}
			&\bigg| \int_{\mathbb R^N} G(y, x) \big( pW_{r,h}^{p-1} \phi +f\big)\bigg|\\
			\le& C \|\phi\|_*  \int_{\mathbb R^N} G(y, x) \bigg( \sum_{j=1}^k  \big(e^{-|y-{x^{+}_{j}}|}+e^{-|y-{x^{-}_{j}}|}\big) \bigg)^{p-1}\bigg( \sum_{j=1}^k  \big(e^{-\tau|y-{x^{+}_{j}}|}+e^{-\tau|y-{x^{-}_{j}}|}\big)\bigg)\\
			&+C\|f\|_* \bigg(\sum_{j=1}^k \int_{\mathbb R^N} G(y, x) \big(e^{ -\tau|y-{x^{+}_{j}}|}+e^{ -\tau|y-{x^{-}_{j}}|}\big)\bigg)\ \\
			\le& C\bigg(
			\|\phi\|_* \sum_{j=1}^k   \big(e^{-2\tau|x-{x^{+}_{j}}|}+e^{-2\tau|x-{x^{-}_{j}}|}\big)+\|f\|_* \sum_{j=1}^k  \big( e^{{-\tau|x-{x^{+}_{j}}|}}+e^{{-\tau|x-{x^{-}_{j}}|}}\big)\bigg).
		\end{align*}
		Moreover,
		\begin{align*}
			\bigg| \int_{\mathbb R^N} G(y, x)  \sum_{j=1}^k  \bigg( U_{x^{+}_{j}}^{p-1}\overline{\mathbb{Z}}_{1 j}+ U_{x^{-}_{j}}^{p-1}\underline{\mathbb{Z}}_{1 j}\Big)
			\bigg|\le C \sum_{j=1}^k  \big(e^{-|x-{x^{+}_{j}}|}+e^{-|x-{x^{-}_{j}}|}\big).
		\end{align*}
		and
		\begin{align*}
			\bigg| \int_{\mathbb R^N} G(y, x)  \sum_{j=1}^k  \bigg( U_{x^{+}_{j}}^{p-1}\overline{\mathbb{Z}}_{2 j}+ U_{x^{-}_{j}}^{p-1}\underline{\mathbb{Z}}_{2 j}\Big)
			\bigg|\le C r \sum_{j=1}^k  \big(e^{-|x-{x^{+}_{j}}|}+e^{-|x-{x^{-}_{j}}|}\big).
		\end{align*}
		That is,
		\begin{align}
			\bigg| \int_{\mathbb R^N} G(y, x)  \sum_{j=1}^k  \bigg( U_{x^{+}_{j}}^{p-1}\overline{\mathbb{Z}}_{\ell j}+ U_{x^{-}_{j}}^{p-1}\underline{\mathbb{Z}}_{\ell j}\Big)
			\bigg|\le C (1+\delta_{\ell 2}r) \sum_{j=1}^k  \big(e^{-|x-{x^{+}_{j}}|}+e^{-|x-{x^{-}_{j}}|}\big).
		\end{align}
		
		On the other hand, according to
		\begin{equation}\label{equationofcl}
			\bigg<-\Delta {\phi}+V(|y|)\phi -p
			W_{r,h}^{p-1}{\phi} , \overline{\mathbb{Z}}_{q 1}
			\bigg> = \bigg<f + \sum_{j=1}^k\sum_{\ell=1}^2 \Big(\, {c_\ell}U_{x^{+}_{j}}^{p-1}\overline{\mathbb{Z}}_{\ell j}+{c_\ell}U_{x^{-}_{j}}^{p-1}\underline{\mathbb{Z}}_{\ell j}\,\Big), \overline{\mathbb{Z}}_{q 1}\bigg>.
		\end{equation}
		We can prove that $c_\ell\to 0$ as $k \to \infty$. In fact, we have
		\begin{align*}
			\bigg|\bigg<&-\Delta {\phi}+V(|y|)\phi -pW_{r,h}^{p-1}{\phi} , \overline{\mathbb{Z}}_{q 1}\bigg>\bigg|\nonumber\\
			&\le C \|\phi\|_*\int_{\R^N}{\sum_{j=1}^k \big(e^{ -\tau|y-{x^{+}_{j}}|}+e^{ -\tau|y-{x^{-}_{j}}|}\big)}\left((V(|y|)-1)\overline{\mathbb{Z}}_{q 1}-p \bigg(W_{r,h}^{p-1}- U_{x^{+}_{1}}^{p-1} \bigg)\overline{\mathbb{Z}}_{q 1} \right)\nonumber\\
			&\le C (1+\delta_{q 2}r) \|\phi\|_* \bigg(\frac{1}{ r^m }+e^{-\frac{(1-\tau)r}{2}}+ \sum_{j=1}^k e^{-\tau|{x^{+}_{1}-{x^{+}_{j}}|} }  + e^{-\tau|{x^{+}_{1}-{x^{-}_{1}}|} }
			\bigg)\\
&\quad\times\left(\sum_{j=1}^k e^{-\tau|{x^{+}_{1}-{x^{+}_{j}}|} }  + e^{-\tau|{x^{+}_{1}-{x^{-}_{1}}|} }  \right)\\
			&\le C (1+\delta_{q 2}r) \|\phi\|_* \frac{1}{k^{\tau m}}
			\left(\sum_{j=1}^k e^{-\tau|{x^{+}_{1}-{x^{+}_{j}}|} }  + e^{-\tau|{x^{+}_{1}-{x^{-}_{1}}|} }  \right).
		\end{align*}
		And
		\begin{align*}
			\bigg|\bigg<&f , \overline{\mathbb{Z}}_{q 1}\bigg>\bigg|\le C  (1+\delta_{q 2}r) \|f\|_* \left(\sum_{j=1}^k e^{-\tau|{x^{+}_{1}-{x^{+}_{j}}|} }  + e^{-\tau|{x^{+}_{1}-{x^{-}_{1}}|} }  \right).
		\end{align*}
		Noting that
		\begin{eqnarray*}
			\big< U_{x^{+}_{1}}^{p-1}\overline{\mathbb{Z}}_{\ell 1}  , \overline{\mathbb{Z}}_{q 1} \big> =\left\{\begin{array}{lll} o_k(1),\qquad &\text{if}\quad \ell \neq q,
				\\[2mm]
				\space\\[2mm]
				\bar{c} (1+\delta_{q2} r^2),\qquad &\text{if} \quad\ell=q. \end{array}\right.
		\end{eqnarray*}
		Then we have
		
		\begin{align} \label{estimate cl}
			|c_{\ell}|=  \left(\frac{1+r\delta_{\ell2}}{1+r^2\delta_{\ell2} } +o_k(1)\right) O_k\Bigl(\frac 1{{k}^{\tau m}}\|{\phi}\|_{{*}}+\|f\|_{{*}} \Bigr)= o_k\left(\frac{1}{1+r}\right), \quad {\mbox {as}} \quad k \to \infty.
		\end{align}
		
		Therefore, it holds that
		
		\begin{align}\label{100-27-2}
			&\quad |\phi(x)|\bigg( \sum_{j=1}^k  \big(e^{-\tau|x-{x^{+}_{j}}|}+e^{-\tau|x-{x^{-}_{j}}|}\big)\bigg)^{-1}  \nonumber \\
			&\le (C\|f \|_* + o_k(1)) + C\|\phi \|_* \bigg(\sum_{j=1}^k \big(e^{-\tau|x-{x^{+}_{j}}|}+e^{-\tau|x-{x^{-}_{j}}|}\big)\bigg)^{-1}\sum_{j=1}^k   \big(e^{-2\tau|x-{x^{+}_{j}}|}+e^{-2\tau|x-{x^{-}_{j}}|}\big).
		\end{align}
		
		Since $\phi\in \mathbb{E}_k$, we have $\| \phi\|_{L^\infty(B_R(x^{+}_{j}))}\to 0$  for any $R>0$. Thus, from \eqref{100-27-2} we get that $\|\phi\|_*\to 0$. This is a contradiction.
		
	\end{proof}

	We can directly derive from Lemma \ref{lem2.1} that linearized problem \eqref{lin} has the following existence, uniqueness results:
	\begin{proposition}\label{pro2.2}
		There exist  $k_0>0  $ and a constant  $C>0 $ such
		that for all  $k\ge k_0 $ and all  $f_k\in L^{\infty}(\R^N) $, problem
		$(\ref{lin}) $ has a unique solution  $\phi_k\equiv {\bf {\Phi}}_k(f_k) $. Besides,
		\begin{equation}\label{Le}
			\Vert \phi_k\Vert_{{*}}\leq C\|f_k\|_{{*}},\qquad
			|c_{k\ell}|\leq  \frac{C} {1+\delta_{\ell 2} r}\|f_k\|_{{*}}, \quad \ell=1,2.
		\end{equation}
	\end{proposition}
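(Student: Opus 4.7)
The plan is to derive Proposition \ref{pro2.2} from Lemma \ref{lem2.1} via the Fredholm alternative together with two standard contradiction arguments. First, I would recast \eqref{lin} in a Hilbert space setting. Define
$$\mathbb{H}_k := \Big\{\phi \in H_s \cap H^1(\mathbb{R}^N) : \big\langle U_{x_j^+}^{p-1}\overline{\mathbb{Z}}_{\ell j}, \phi\big\rangle = \big\langle U_{x_j^-}^{p-1}\underline{\mathbb{Z}}_{\ell j}, \phi\big\rangle = 0, \ 1\le j\le k, \ \ell = 1, 2\Big\},$$
equipped with the inner product $\langle u,v\rangle_V := \int_{\mathbb{R}^N}(\nabla u\cdot\nabla v + V(|y|)uv)$, which is equivalent to the standard $H^1$ inner product since $V$ is bounded above and below by positive constants. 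By the Riesz representation theorem, the bilinear form $(\phi,\psi)\mapsto \int p W_{r,h}^{p-1}\phi\psi$ defines a bounded self-adjoint operator $T_k$ on $\mathbb{H}_k$, and since $W_{r,h}^{p-1}$ decays exponentially at infinity, a standard localization-plus-Rellich argument shows that $T_k$ is compact. After projecting the source onto $\mathbb{H}_k$ (which absorbs the Lagrange multipliers $c_{k\ell}$), \eqref{lin} becomes $(\mathrm{Id}-T_k)\phi = F_k$ in $\mathbb{H}_k$.

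Uniqueness I would extract from Lemma \ref{lem2.1} by contradiction. Suppose there were arbitrarily large $k$ for which $\mathrm{Id}-T_k$ had a nontrivial kernel; then one can pick $\phi_k$ in that kernel, normalized so that $\|\phi_k\|_* = 1$. This $\phi_k$ solves \eqref{lin} with $f_k\equiv 0$ and some multipliers $c_{k\ell}$, so Lemma \ref{lem2.1} applied to this sequence yields $\|\phi_k\|_*\to 0$, contradicting the normalization. Substituting $\phi_k\equiv 0$ back into the multiplier identity \eqref{equationofcl} and invoking the block-diagonal non-degeneracy
$$\big\langle U_{x_1^+}^{p-1}\overline{\mathbb{Z}}_{\ell 1}, \overline{\mathbb{Z}}_{q 1}\big\rangle = \bar{c}\,(1 + \delta_{q2}r^2)\,\delta_{\ell q} + o_k(1),$$
already established in the proof of Lemma \ref{lem2.1}, forces $c_{k\ell}=0$. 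Injectivity of $\mathrm{Id}-T_k$ then yields surjectivity by the Fredholm alternative, so the unique solution $\phi_k = \mathbf{\Phi}_k(f_k)$ exists for each $f_k$.

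Finally, the quantitative bound $\|\phi_k\|_*\le C\|f_k\|_*$ follows by another contradiction: if no such $C$ existed, rescaling would produce a sequence with $\|\phi_k\|_*=1$ and $\|f_k\|_*\to 0$, again contradicting Lemma \ref{lem2.1}. With this estimate available, substituting $\|\phi\|_*\le C\|f\|_*$ into the chain of inequalities leading to \eqref{estimate cl} absorbs the $\frac{1}{k^{\tau m}}\|\phi\|_*$ term into $\|f_k\|_*$ and produces $|c_{k\ell}|\le C(1+\delta_{\ell 2}r)^{-1}\|f_k\|_*$. Since Lemma \ref{lem2.1} has already done the essential analytic work, I do not expect a genuine obstacle; the one point deserving careful (but routine) verification is the compactness of $T_k$ on $\mathbb{H}_k$, which reduces to cutting off outside a ball of radius $R$, controlling the exterior contribution by the exponential decay of $W_{r,h}^{p-1}$, and applying Rellich--Kondrachov on the interior.
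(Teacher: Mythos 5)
Your argument is correct and matches the standard Lyapunov--Schmidt mechanism that the paper implicitly invokes when it asserts Proposition~\ref{pro2.2} is ``directly derived'' from Lemma~\ref{lem2.1}: the uniform a priori bound $\|\phi_k\|_*\le C\|f_k\|_*$ follows by the blow-up contradiction you describe, existence by the Fredholm alternative once injectivity is in hand (with compactness of $T_k$ coming from the exponential decay of $W_{r,h}^{p-1}$ plus Rellich on bounded sets), and the multiplier estimate by feeding $\|\phi_k\|_*\le C\|f_k\|_*$ back into the computation leading to \eqref{estimate cl}. The paper omits these routine steps, and your proposal is the canonical way to fill them in.
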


	Now, we rewrite problem \eqref{lin} as
	\begin{align}\label{linn}
		\begin{cases}
			L \omega_k =  l_k  +  R(\omega_k)+ \sum\limits_{j=1}^k\sum\limits_{\ell=1}^2 c_{k\ell} \Big( U_{x^{+}_{j}}^{p-1}\overline{\mathbb{Z}}_{\ell j}+ U_{x^{-}_{j}}^{p-1}\underline{\mathbb{Z}}_{\ell j}\Big),\\[2mm]
			\omega_k \in  \mathbb{E}_k,
		\end{cases}
	\end{align}
	where
	\begin{equation*}
		L \omega_k := -\Delta \omega_k+V(|y|) \omega_k-pW^{p-1}_{r,h}\omega_k,
	\end{equation*}
	
	\begin{equation*}
		l_k:=-\sum_{j=1}^k (V(|y|)-1) (U_{x^{+}_{j}}+U_{x^{-}_{j}}) +\big( W_{r,h}^{p}-\sum_{j=1}^k \big(U_{x^{+}_{j}}^p+U_{x^{-}_{j}}^p\big)\big),
	\end{equation*}
	and
	\begin{equation*}
		R(\omega_k):=(W_{r,h}+\omega_k)^p  -W_{r,h}^p  -pW_{r,h}^{p-1} \omega_k.
	\end{equation*}
	
	For $l_k$, we have the following estimate.
	\begin{lemma}\label{lem2.3}
		Suppose that  $N\ge 3$, and $ V(|y|)$ satisfies $({\bf V_1})$, $(r,h) \in{{\mathscr S}_k}$. There exists an integer  $k_0>0$ large enough, such that for any $k \ge k_0, $
		\[
		\|l_k\|_* \le  \frac{C }{k^{\min \{ \frac p2-\tau, 1 \}m}}.
		\]
		
	\end{lemma}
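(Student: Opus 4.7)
I would decompose $l_k = V_k + I_k$ with
\[V_k := -\sum_{j=1}^k(V(|y|)-1)\bigl(U_{x_j^+}+U_{x_j^-}\bigr),\qquad I_k := W_{r,h}^p - \sum_{j=1}^k\bigl(U_{x_j^+}^p+U_{x_j^-}^p\bigr),\]
and estimate each in the $\|\cdot\|_*$ norm separately. Both pieces inherit the $H_s$-symmetries of the configuration, and the weight $\omega(y):=\sum_j(e^{-\tau|y-x_j^+|}+e^{-\tau|y-x_j^-|})$ is $H_s$-invariant as well, so it suffices to bound $|l_k|/\omega$ on the Voronoi cell $\Omega:=\{y:|y-x_1^+|\le|y-x_i^\pm|\ \text{ for all }(i,\pm)\ne(1,+)\}$. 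On $\Omega$, $\omega(y)\ge e^{-\tau|y-x_1^+|}$, so the task reduces to bounding $|l_k(y)|\,e^{\tau|y-x_1^+|}$ on $\Omega$.

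For $V_k$: split $\Omega$ at the radius $|y|=r/2$. When $|y|<r/2$, every distance $|y-x_j^\pm|$ exceeds $r/2$, so since $V$ is bounded one has $|V_k|\le Cke^{-r/2}$; divided by $e^{-\tau|y-x_1^+|}\ge e^{-3\tau r/2}$ and using $r\sim k\ln k$, this ratio is super-polynomially small in $k$. When $|y|\ge r/2$, $({\bf V_1})$ gives $|V(|y|)-1|\le C|y|^{-m}\le Cr^{-m}$; combined with the separation-of-variables estimate (below) applied to $e^{\tau|y-x_1^+|}\bigl(e^{-|y-x_j^+|}+e^{-|y-x_j^-|}\bigr)$, one concludes $|V_k(y)|\,e^{\tau|y-x_1^+|}\le Cr^{-m}\le C/(k\ln k)^m$, stronger than the claimed bound.

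For $I_k$: apply the elementary inequality
\[\bigl|(a+b)^p-a^p-b^p\bigr|\le C\,a^\alpha b^\beta\qquad(a,b\ge0,\ \alpha+\beta=p),\]
valid with $(\alpha,\beta)=(p-1,1)$ when $p\ge 2$ and with $(\alpha,\beta)=(p/2,p/2)$ when $1<p<2$. Iterating it over the $2k$ bubbles bounds $|I_k(y)|$ on $\Omega$ (where $U_{x_1^+}$ is the dominant bubble) by a sum of cross terms $C\,U_{x_1^+}^\alpha(y)\,U_{x_j^\pm}^\beta(y)$ over $(j,\pm)\ne(1,+)$. The decisive estimate is the triangle-inequality splitting
\[(\alpha-\tau)|y-x_1^+|+\beta|y-x_j^\pm|\ge\min\{\alpha-\tau,\beta\}\,|x_1^+-x_j^\pm|,\]
which, after multiplying by $e^{\tau|y-x_1^+|}$, gives
\[U_{x_1^+}^\alpha(y)\,U_{x_j^\pm}^\beta(y)\,e^{\tau|y-x_1^+|}\le C\,e^{-\tau|y-x_j^\pm|}\,e^{-\min\{\alpha-\tau,\beta\}|x_1^+-x_j^\pm|}.\]
Since adjacent bubble separations scale as $m\ln k$ and the remaining separations form a convergent geometric sum (the nearest neighbours dominate), the total contribution is bounded by $Ck^{-\sigma m}$ with $\sigma=\min\{\alpha-\tau,\beta\}$, which equals $1$ (up to $\tau$) when $p\ge 2$ and $p/2-\tau$ when $1<p<2$. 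Thus $\|I_k\|_*\le Ck^{-\min\{p/2-\tau,\,1\}m}$.

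The main technical obstacle lies in the subcritical range $1<p<2$: one cannot linearise $(a+b)^p-a^p-b^p$ in the smaller bubble, and is forced to use the weaker H\"older-type bound $(ab)^{p/2}$, which is precisely what produces the factor $p/2$ (rather than $1$) in the final exponent. Combining with the $V_k$ bound gives $\|l_k\|_*\le C/k^{\min\{p/2-\tau,\,1\}m}$, as claimed.
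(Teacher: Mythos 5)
Your proposal is correct and follows essentially the same route as the paper: the same decomposition $l_k=V_k+I_k$, the same two-regime bound on $|V(|y|)-1|U_{x_j^\pm}$ split at $|y|\sim r/2$, the same elementary inequality $|(a+b)^p-a^p-b^p|\le Ca^\alpha b^\beta$ with $(\alpha,\beta)=(p-1,1)$ for $p\ge 2$ and $(p/2,p/2)$ for $1<p<2$, and the same nearest-neighbour dominance using $|x_2^+-x_1^+|=m\ln k(1+o(1))$ and $|x_1^+-x_1^-|=(m+2)\ln k(1+o(1))$. One small bookkeeping slip: in the display you want to compare against $\omega(y)\ge e^{-\tau|y-x_1^+|}$ (the term carried by the index you are localizing at), not extract a factor $e^{-\tau|y-x_j^\pm|}$ on the right-hand side; as written, the triangle-inequality step actually gives exponent $\min\{\alpha-\tau,\beta-\tau\}$ rather than $\min\{\alpha-\tau,\beta\}$, which for $p>2$ is $1-\tau$ instead of $1$. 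Dividing by $e^{-\tau|y-x_1^+|}$ instead fixes this and recovers the paper's exponent, so the discrepancy is only in the write-up, not the method.
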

	
	\begin{proof}
		
		For any $y\in \Omega_1^{+}$, noting that if $y \le \displaystyle\frac{1}{2}|{x^{+}_{1}}|$,
		\begin{equation*}
			\frac{C}{1+  |y|^m} e^{-|y-{x^{+}_{1}}|(1-\tau)}\le \frac{C}{ |{x^{+}_{1}}|^m}.
		\end{equation*}
		Else if $y > \displaystyle\frac{1}{2}|{x^{+}_{1}}|$,
		\begin{equation*}
			\frac{C}{1+  |y|^m} e^{-|y-{x^{+}_{1}}|(1-\tau)}\le C e^{-|{x^{+}_{1}}|(\frac{1}{2}-\tau)} \le \frac{C}{ |{x^{+}_{1}}|^m}.
		\end{equation*}
		Thus, we can give the estimate of the first term of $l_k$:
		\[
\begin{array}{ll}
		\bigg| \sum_{i=1}^k (V(|y|)-1) (U_{x^{+}_{i}}+  U_{x^{-}_{i}})\bigg|&\displaystyle\le  \frac{C}{1+  |y|^m} \sum_{i=1}^k \big(e^{-|y-{x^{+}_{i}}|}+ e^{-|y-{x^{-}_{i}}|} \big)\\&\displaystyle\le  \frac{C}{   r^m} \sum_{i=1}^k \big(e^{-\tau |y-{x^{+}_{i}}|}+  e^{-\tau |y-{x^{-}_{i}}|} \big).\end{array}
		\]
		
		Meanwhile, for the second term of $l_k$, if $p>2$, it holds
		
		\[
		\begin{split}
			\bigg| W_{r,h}^{p}-\sum_{i=1}^k \big(U_{x^{+}_{i}}^p+U_{x^{-}_{i}}^p\big)\bigg|\le & C \left(\sum_{i=2}^k U_{x^{+}_{1}}^{p-1} U_{x^{+}_{i}}+U_{x^{+}_{1}}^{p-1} U_{x^{-}_{1}}\right)\\ \le &C e^{-(p-2) |y-{x^{+}_{1}}|}  \left(\sum_{i=2}^k e^{- |{x^{+}_{i}}-{x^{+}_{1}}|}+ e^{- |{x^{+}_{1}}-{x^{-}_{1}}|}\right)\\
			\le & C e^{-\tau |y-{x^{+}_{1}}|}  \left(\sum_{i=2}^k e^{- |{x^{+}_{i}}-{x^{+}_{1}}|}+ e^{- |{x^{+}_{1}}-{x^{-}_{1}}|}\right),
		\end{split}
		\]
		while if $p\in (1, 2]$,
		\[
		\begin{split}
			\bigg| W_{r,h}^{p}-\sum_{i=1}^k \big(U_{x^{+}_{i}}^p+U_{x^{-}_{i}}^p\big)\bigg|\le& C \sum_{i=2}^k U_{x^{+}_{1}}^{\frac p2} U_{x^{+}_{i}}^{\frac p2}+U_{x^{+}_{1}}^{\frac p2} U_{x^{-}_{1}}^{\frac p2}\\ \le & C e^{-\tau  |y-{x^{+}_{1}}|}  \left(\sum_{i=2}^k e^{-(\frac p2-\tau) |{x^{+}_{i}}-{x^{+}_{1}}|}+ e^{-(\frac p2-\tau) |{x^{+}_{1}}-{x^{-}_{1}}|}\right).
		\end{split}
		\]
		
		Noting that $(r,h) \in{{\mathscr S}_k}$, thus,
		
		$$|x^{+}_2-x^{+}_1|=2r(1-h^2)^{\frac{1}{2}} \sin (\frac{\pi}{k}) = \frac{2\pi r}{k} (1+o(1)) =m \ln k (1+o(1))$$
		and
		
		$$|x^{+}_1-x^{-}_1|=2rh = (m+2) \ln k (1+o(1)),$$
		which leads to
		\begin{align*}
			\|l_k\|_* \le & \frac{C }{r^m}+ Ce^{- \min \{ \frac p2-\tau, 1 \} (|x^{+}_2-x^{+}_1|+|x^{+}_1-x^{-}_1|)}\\
			\le & \frac{C }{r^m}+ \frac{C }{k^{\min \{ \frac p2-\tau, 1 \}m}} \le \frac{C }{k^{\min \{ \frac p2-\tau, 1 \}m}}.
		\end{align*}

	\end{proof}
	On the other hand, we can easily get that
	\begin{equation}
		\|R(\omega_k)\|_* \le  \|\omega_k\|_*^{(1+\tau)}.
	\end{equation}
	Using Lemma \ref{lem2.1}, Proposition \ref{pro2.2},  Lemma \ref{lem2.3} and the contraction mapping theorem,  we can obtain the solvability theory for the linearized problem \eqref{linn}.
	
	\begin{proposition} \label{pro2.4}
		Suppose that  $N\ge 3$, and $ V(|y|)$ satisfies $({\bf V_1})$, $(r,h) \in{{\mathscr S}_k}$. There exists an integer  $k_0>0$ large enough, such that for any $k \ge k_0, $ problem \eqref{linn} has a unique solution $\omega_k$ which satisfies
		\begin{equation}\label{11-25-4}
			\|\omega_k\|_*\le C \|l_k\|_* \le  \frac{C }{k^{\min \{ \frac p2-\tau, 1 \}m}}.
		\end{equation}
		and
		\begin{align}\label{estimateforc}
			|c_{k\ell}|\leq  \frac{C} {1+\delta_{\ell 2} r} \le  \frac{C }{k^{\min \{ \frac p2-\tau, 1 \}m}},   \quad{for} ~ \ell=1,2.
		\end{align}
		
		Moreover,  $\omega_k$ can be recognized as a $C^1$ map from $ \mathscr {S}_k$ to $\mathbb{E}_k$, which we denote as $\omega_{r,h}$.
	\end{proposition}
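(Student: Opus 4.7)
The plan is to recast the nonlinear problem \eqref{linn} as a fixed point equation and apply the contraction mapping theorem, using the linear solvability result in Proposition \ref{pro2.2} together with the error estimate in Lemma \ref{lem2.3}. Let $\mathbf{\Phi}_k$ denote the bounded linear solution operator from Proposition \ref{pro2.2}; then \eqref{linn} is equivalent to finding $\omega_k \in \mathbb{E}_k$ solving
\begin{equation*}
\omega_k = \mathcal{T}(\omega_k) := \mathbf{\Phi}_k\bigl(l_k + R(\omega_k)\bigr),
\end{equation*}
with the multipliers $c_{k\ell}$ produced automatically by the linear solver. Set
\begin{equation*}
\mathcal{N}_k := \Bigl\{\omega \in \mathbb{E}_k : \|\omega\|_* \le C_*\|l_k\|_*\Bigr\},
\end{equation*}
with $C_*$ larger than the constant from Proposition \ref{pro2.2}.

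First I would verify that $\mathcal{T}$ maps $\mathcal{N}_k$ into itself. By Proposition \ref{pro2.2}, $\|\mathcal{T}(\omega)\|_* \le C\bigl(\|l_k\|_* + \|R(\omega)\|_*\bigr)$, and the pointwise quadratic bound $\|R(\omega)\|_* \le \|\omega\|_*^{1+\tau}$ already recorded in the excerpt yields
\begin{equation*}
\|\mathcal{T}(\omega)\|_* \le C\|l_k\|_* + C(C_*\|l_k\|_*)^{1+\tau}.
\end{equation*}
Since Lemma \ref{lem2.3} gives $\|l_k\|_* \le C/k^{\min\{p/2-\tau,1\}m}\to 0$, the second term is of lower order for $k$ large, and the bound $C_*\|l_k\|_*$ is preserved.

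Next I would check the contraction property. For $\omega_1,\omega_2 \in \mathcal{N}_k$, write
\begin{equation*}
R(\omega_1)-R(\omega_2) = p\int_0^1\!\!\Bigl[(W_{r,h}+\omega_2+t(\omega_1-\omega_2))^{p-1}-W_{r,h}^{p-1}\Bigr]\,dt\,(\omega_1-\omega_2),
\end{equation*}
so that the factor in brackets vanishes as $\|\omega_i\|_*\to 0$; one obtains $\|R(\omega_1)-R(\omega_2)\|_* \le o_k(1)\|\omega_1-\omega_2\|_*$. Composing with $\mathbf{\Phi}_k$ gives a genuine contraction for $k$ large, and Banach's fixed point theorem yields a unique $\omega_k \in \mathcal{N}_k$. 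The bound \eqref{11-25-4} is the defining constraint of $\mathcal{N}_k$ together with Lemma \ref{lem2.3}, while \eqref{estimateforc} is read off from the $c_{k\ell}$-estimate in \eqref{Le} applied to $f = l_k + R(\omega_k)$, since the quadratic part is absorbed by the linear one.

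For the $C^1$ dependence of $\omega_{r,h}$ on $(r,h) \in \mathscr{S}_k$, I would appeal to the implicit function theorem in the Banach space $(\mathbb{E}_k,\|\cdot\|_*)\times \mathbb{R}^2$ applied to the map
\begin{equation*}
F(\omega,r,h) := \omega - \mathbf{\Phi}_{r,h}\bigl(l_k(r,h) + R(\omega)\bigr),
\end{equation*}
since $W_{r,h}$, and hence $l_k$ and $R$, depend smoothly on $(r,h)$, and the partial Fr\'echet derivative $\partial_\omega F$ at the solution is $I - \mathbf{\Phi}_{r,h}\circ (\text{small Lipschitz})$, which is invertible by the same contraction estimate. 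The main obstacle, and the only place where serious care is needed, is ensuring that the Lipschitz estimate of $R$ really holds under the norm $\|\cdot\|_*$ uniformly in $(r,h)\in \mathscr{S}_k$; this is where the specific decay profile of $W_{r,h}$ (captured in the weight of $\|\cdot\|_*$) and the smallness of $\|\omega_i\|_*$ given by \eqref{11-25-4} play the decisive role.
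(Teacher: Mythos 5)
Your proposal is correct and takes essentially the same approach as the paper: the paper dispatches Proposition~\ref{pro2.4} in a single sentence, citing Lemma~\ref{lem2.1}, Proposition~\ref{pro2.2}, Lemma~\ref{lem2.3}, and the contraction mapping theorem, and your fixed-point argument on the ball $\mathcal{N}_k$ (with the $C^1$ dependence obtained via the implicit function theorem) is precisely the standard elaboration of that sketch.
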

	
	Next, we give an expansion of the energy functional and its partial derivative by the standard arguments. For conciseness, we omit the proof which is quite similar but more accurate than that in \cite{DM}.
	
	\begin{proposition}\label{pro2.5}
		Let $ \omega_{r,h}$ be a function obtained in Proposition \ref{pro2.4}, $(r,h) \in{{\mathscr S}_k}$, and
		\begin{align*}
			F(r,h):= I(W_{r, h}+\omega_{r,h}).
		\end{align*}
		Suppose that $N\ge 3 $ and $V(|y|)$ satisfies $({\bf V_1})$,  then we have the following expansion as $k \to \infty$:
		\begin{equation}
			\begin{aligned}
				F(r,h) &\,=\, k \Big(\frac{A_1}{r^m} + A_2  - 2B_1e^{- 2 \pi \sqrt{1-h^2 } \frac{r}{k} }\left(  2 \pi \sqrt{1-h^2 } \frac{r}{k} \right)^{-\frac{N-1}{2}}  -B_1 e^{-2rh} \left( 2rh \right)^{-\frac{N-1}{2}}  \Big)
				\\
				& \quad+ k\left( O_k\bigg(\frac{1}{r^{m+\gamma}}\bigg)+o_k\left(e^{- 2 \pi \sqrt{1-h^2 } \frac{r}{k} }\left(   2 \pi \sqrt{1-h^2 } \frac{r}{k} \right)^{-\frac{N-1}{2}}\right) \right)
				\\
				& \,=\,k \Big(\frac{A_1}{r^m} + A_2 -2 (B_1+o(1))U(|{x^{+}_{2}}-{x^{+}_{1}}|)-(B_1+o(1))U(|{x^{+}_{1}}-{x^{-}_{1}}|)+ O_k(\frac{1}{r^{m+\gamma}})\Big),
			\end{aligned}
		\end{equation}

		\begin{equation}
			\begin{aligned}\label{Fr}
				\frac{\partial F(r,h)}{\partial r} &\,=\, k \Big(-\frac{m A_1}{r^{m+1}} + \frac{4 B_1\pi}{k}\sqrt{1-h^2 } e^{- 2 \pi \sqrt{1-h^2 } \frac{r}{k} }\left(   2 \pi \sqrt{1-h^2 } \frac{r}{k} \right)^{-\frac{N-1}{2}}  \Big)
				\\
				&\quad+ k \left( O_k\left(\frac{1}{r^{m+1+\gamma}}\right)+o_k\left(\frac{e^{-2 \pi \sqrt{1-h^2 } \frac{r}{k} }}{k} \left(   2 \pi \sqrt{1-h^2 } \frac{r}{k} \right)^{-\frac{N-1}{2}} \right) \right),
			\end{aligned}
		\end{equation}
		and
		\begin{equation}
			\begin{aligned} \label{Fh}
				\frac{\partial F(r,h)}{\partial h}
				&\,=\,   k \bigg(- \frac{4 B_1\pi rh}{k\sqrt{1-h^2}} e^{- 2 \pi \sqrt{1-h^2 } \frac{r}{k} }\left(   2 \pi \sqrt{1-h^2 } \frac{r}{k} \right)^{-\frac{N-1}{2}}  \Big) \\
				&\quad+ 2B_1 r e^{-2rh} \left( 2rh \right)^{-\frac{N-1}{2}} +o_k\left(\frac{\ln k}{k} e^{-2 \pi \sqrt{1-h^2 } \frac{r}{k}} \left(   2 \pi \sqrt{1-h^2 } \frac{r}{k} \right)^{-\frac{N-1}{2}} \right)  \Bigg)
			\end{aligned}
		\end{equation}
		where  $A_1, A_2, B_1$ are  positive constants that
		\begin{equation}\label{AAB}
			A_1 = a \int_{  \mathbb{R}^N}  U^2,   \quad     A_2 =   \big(1- \frac{2}{p+1} \big) \int_{   \mathbb{R}^N}     U^{p+1},  \quad     B_1=    C_0 \int_{  \mathbb{R}^N}  U^p e^{-y_1}.
		\end{equation}
	\end{proposition}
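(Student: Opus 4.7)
The plan is a standard Lyapunov--Schmidt energy expansion, sharpened from \cite{DM} so as to match the $*$-norm control from Proposition~\ref{pro2.4}. Writing $F(r,h) = I(W_{r,h} + \omega_{r,h})$ and Taylor expanding in $\omega_{r,h}$,
$$
F(r,h) = I(W_{r,h}) + \langle I'(W_{r,h}), \omega_{r,h}\rangle + \tfrac12\langle I''(W_{r,h})\omega_{r,h}, \omega_{r,h}\rangle + O\bigl(\|\omega_{r,h}\|_*^{2+\tau}\bigr).
$$
Using the equation for $\omega_{r,h}$ together with the orthogonality built into $\mathbb{E}_k$, the first-order correction equals $\langle l_k + R(\omega_{r,h}), \omega_{r,h}\rangle$ and is thus at most $\|l_k\|_*\|\omega_{r,h}\|_* + \|\omega_{r,h}\|_*^{2+\tau}$; with the bounds of Proposition~\ref{pro2.4} and Lemma~\ref{lem2.3}, everything beyond $I(W_{r,h})$ is $O(k^{-2\min\{p/2-\tau,1\}m})$, which fits inside the claimed remainder thanks to $m > \max\{4/(p-1),4\}$. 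So the task reduces to expanding $I(W_{r,h})$.

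Multiplying $-\Delta U_{x_j^\pm} + U_{x_j^\pm} = U_{x_j^\pm}^p$ by $W_{r,h}$ and summing gives $\int(|\nabla W_{r,h}|^2 + W_{r,h}^2) = \sum_{j,\pm}\int U_{x_j^\pm}^p W_{r,h}$, so
$$
I(W_{r,h}) = \tfrac12\sum_{j,\pm}\int U_{x_j^\pm}^p W_{r,h} + \tfrac12\int (V-1)\,W_{r,h}^2 - \tfrac{1}{p+1}\int W_{r,h}^{p+1}.
$$
Expanding pointwise $W_{r,h}^{p+1} = \sum U_{x_j^\pm}^{p+1} + (p+1)\sum_{(j,\pm)} U_{x_j^\pm}^p\sum_{(i,\pm')\ne(j,\pm)} U_{x_i^{\pm'}} + (\text{quadratic remainder})$ and combining with the first sum yields the self-energy $2k(\tfrac12 - \tfrac{1}{p+1})\int U^{p+1} = kA_2$ plus the interaction sum $-\tfrac12\sum_{(j,\pm)}\sum_{(i,\pm')\ne(j,\pm)}\int U_{x_j^\pm}^p U_{x_i^{\pm'}}$. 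The classical asymptotic
$$
\int U(y)^p\, U(y-z)\,dy = (C_0 + o(1))\,|z|^{-(N-1)/2}\,e^{-|z|}\!\int U(y)^p e^{-y_1}\,dy, \qquad |z|\to\infty,
$$
together with the nearest-neighbour distances $d_{\mathrm{ring}} := |x_1^+ - x_2^+| = \tfrac{2\pi r\sqrt{1-h^2}}{k}(1+o(1))$ and $d_{\mathrm{vert}} := |x_1^+ - x_1^-| = 2rh$ (each bubble has two same-ring nearest neighbours plus one directly across), produces exactly the stated $-2kB_1 e^{-d_{\mathrm{ring}}} d_{\mathrm{ring}}^{-(N-1)/2}$ and $-kB_1 e^{-d_{\mathrm{vert}}} d_{\mathrm{vert}}^{-(N-1)/2}$ terms. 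Since $|x_j^\pm| = r$, Taylor expanding $V(|y|)-1$ around each centre and using the exponential localization of $U^2$ gives $\int(V-1)W_{r,h}^2 = 2k(a_1/r^m)\int U^2\,(1 + O(1/r))$, i.e.\ $kA_1/r^m$ with $A_1 = a_1\int U^2$; higher-order Taylor coefficients of $V$ and non-nearest-neighbour interactions fold into $kO_k(1/r^{m+\gamma})$.

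For $\partial_r F$ and $\partial_h F$, one differentiates the expansion, using the $C^1$-dependence of $\omega_{r,h}$ on $(r,h)$ from Proposition~\ref{pro2.4} to keep the error terms of the declared order. The leading derivatives come from the explicit main terms: $\partial_r(kA_1/r^m) = -kmA_1/r^{m+1}$, $\partial_r d_{\mathrm{ring}} = 2\pi\sqrt{1-h^2}/k$, so the ring-interaction provides the main $\partial_r F$, while $\partial_h d_{\mathrm{vert}} = 2r$, so the vertical-interaction provides the main $\partial_h F$; the ``wrong-variable'' cross derivatives pick up an extra factor $h \sim 1/k$ or work at a strictly faster-decaying exponential scale, and are absorbed by the displayed $o_k$ errors. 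The main obstacle is the bookkeeping: verifying that $\|\omega_{r,h}\|_*^2$ really fits inside $k^{-m-\gamma}$ (this is where $m > \max\{4/(p-1),4\}$ is used), and showing that every non-nearest-neighbour interaction is dominated by the two displayed exponential terms, which follows since in $\mathscr{S}_k$ every extra ``hop'' costs at least a factor $k^{-m}$.
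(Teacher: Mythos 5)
The paper gives no proof of Proposition~\ref{pro2.5}; it only remarks that the argument is ``quite similar but more accurate than that in \cite{DM}.'' Your overall plan is indeed that standard argument: Taylor-expand $I(W_{r,h}+\omega_{r,h})$ around $W_{r,h}$, use the $\|\cdot\|_*$-estimates from Proposition~\ref{pro2.4} and Lemma~\ref{lem2.3} to push everything beyond $I(W_{r,h})$ into the $kO_k(r^{-m-\gamma})$ remainder, then split $I(W_{r,h})$ into self-energy, interaction, and potential pieces via the classical asymptotic $\int U_x^pU_y\approx B_1|x-y|^{-(N-1)/2}e^{-|x-y|}$. The expansion of $F(r,h)$ and of $\partial_rF$ that this produces is correct, and matches the stated formulas.

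However, your treatment of $\partial_hF$ contains a genuine error. You assert that only the vertical interaction ($\partial_h d_{\mathrm{vert}}=2r$) supplies the main term of $\partial_hF$, and that the ``wrong-variable'' derivative of the ring interaction ``picks up an extra factor $h\sim 1/k$\ldots and is absorbed by the displayed $o_k$ errors.'' This is false, and the mistake is visible in the proposition itself: the formula \eqref{Fh} displays \emph{two} main terms, one from each interaction. Concretely, using \eqref{B1}--\eqref{B13} one has $e^{-d_{\mathrm{vert}}}d_{\mathrm{vert}}^{-(N-1)/2}\sim k^{-2}\,e^{-d_{\mathrm{ring}}}d_{\mathrm{ring}}^{-(N-1)/2}$ on $\mathscr S_k$. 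The ring-interaction term $-2kB_1e^{-d_{\mathrm{ring}}}d_{\mathrm{ring}}^{-(N-1)/2}$ differentiated in $h$ (via $\partial_h d_{\mathrm{ring}}=-\tfrac{2\pi rh}{k\sqrt{1-h^2}}$) has size $\sim rh\,e^{-d_{\mathrm{ring}}}d_{\mathrm{ring}}^{-(N-1)/2}\sim (r/k)\,e^{-d_{\mathrm{ring}}}d_{\mathrm{ring}}^{-(N-1)/2}$, while the vertical-interaction term $-kB_1e^{-d_{\mathrm{vert}}}d_{\mathrm{vert}}^{-(N-1)/2}$ differentiated in $h$ has size $\sim kr\,e^{-d_{\mathrm{vert}}}d_{\mathrm{vert}}^{-(N-1)/2}\sim (r/k)\,e^{-d_{\mathrm{ring}}}d_{\mathrm{ring}}^{-(N-1)/2}$. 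These are the \emph{same order}, and they have opposite signs; their competition is exactly what pins down $h_k\sim \pi(m+2)/(mk)$. Dropping the ring-cross term destroys the sign-change argument for $\partial_hF$ in $\mathscr S_k$ and hence the existence of the maximum point. (Your reasoning is fine for $\partial_rF$: there the vertical cross-derivative does gain an extra $h\sim 1/k$ relative to the ring term, i.e.\ it is of order $k^{-2}$ times the main term, so it is genuinely absorbed into $o_k$.) You should redo the $\partial_hF$ bookkeeping keeping both interaction contributions; the rest of the proposal is sound.
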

	Since the main term of $F(r,h)$ has a maximum point
	\begin{equation*}
		(\tilde{r_k},\tilde{h_k})=\left( \left(\frac{m}{2\pi}+o(1)\right)k\ln k, \left(\frac{\pi(m+2)}{m}+o(1)\right) \frac{1}{k}\right),
	\end{equation*} which is in the interior of ${{\mathscr S}_k}$, and for $\hat{\sigma}>0$ small, we have
	
	\begin{align}
		&\frac{\partial F(r,h)}{\partial r}\bigg(\left(\frac{m}{2\pi}-\hat{\sigma}\right)k\ln k, h\bigg)>0,\quad \frac{\partial F(r,h)}{\partial r}\bigg(\left(\frac{m}{2\pi}+\hat{\sigma}\right)k\ln k, h\bigg)<0, \quad \nonumber\\ &\text{for any}\ h \in\left(\left(\frac{\pi(m+2)}{m} -\hat{\sigma}\right)\frac{1}{k},\left(\frac{\pi(m+2)}{m} +\hat{\sigma}\right)\frac{1}{k}\right),  \nonumber\\
		&\frac{\partial F(r,h)}{\partial h}\bigg(r, \left(\frac{\pi(m+2)}{m} -\hat{\sigma}\right)\frac{1}{k}\bigg)>0,\quad \frac{\partial F(r,h)}{\partial h}\bigg(r, \left(\frac{\pi(m+2)}{m} +\hat{\sigma}\right)\frac{1}{k}\bigg)<0,\nonumber\\ &\text{for any}\ r \in\left(\left(\frac{m}{2\pi}-\hat{\sigma}\right)k\ln k,\left(\frac{m}{2\pi}-\hat{\sigma}\right)k\ln k\right).
	\end{align}
	Thus for the function $F(r,h)$, we can find a maximum point $(r_k,h_k)$ in the interior of ${{\mathscr S}_k}$.
	
	Therefore,
	we can prove that \eqref{equation} has a solution $u_k$
	of the form
	\[
	u_k = W_{r_k,h_k}+\omega_k,
	\]
	where  $\omega_k\in  \mathbb{E}_k \cap  H^1(\R^N)$,   $(r_k,h_k) \in {{\mathscr S}_k}$ and as $k\to +\infty$,
	
	\begin{equation}\label{1-28-2}
		\|\omega_k\|_* \le  \frac{C }{k^{\min \{ \frac p2-\tau, 1 \}m}}.
	\end{equation}
	
	\medskip
	
	\section{Non-degeneracy of the solutions}
	In the following of the paper, we always assume that $V(|y|)$ satisfies $({\bf V_1})$ and $({\bf V_2})$. Let $u_k$ be a solution of
	\begin{equation}\label{1-27-11}
		-\Delta u + V(|y|)u=u^p\quad \text{in} \ \R^N,
	\end{equation}
	constructed in Theorem~A.
	In the following, we will use local Pohozaev identities to prove Theorem \ref{th11}, arguing by contradiction.

	Suppose that there are $k_n\to +\infty$,  satisfying $\xi_n\in H_s$,
	and
	\begin{equation}\label{10-13-12}
		L_{k_n}\xi_n =0 \quad \text{in} \ \R^N,
	\end{equation}
	while $\xi_n \ne 0$. Without loss of generality, we may assume $\|\xi_n\|_*=1$. Define
	\begin{equation}\label{1-10-4}
		\tilde \xi_{n} (y)= \xi_n
		(y+ x_{k_n, 1}).
	\end{equation}

	\begin{lemma}\label{lem3.1}
		
		It holds
		
		\begin{equation}\label{10-9-4}
			\tilde \xi_{n}  \to  b_1\frac{\partial U}{\partial y_1}+b_3\frac{\partial U}{\partial y_3},
		\end{equation}
		uniformly in $C^1(B_R(0))$ for any $R>0$, where $b_1,b_3$ are  some constants.
		
	\end{lemma}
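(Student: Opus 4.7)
The plan is a standard concentration/blow-up argument around one bubble, combined with the non-degeneracy of $U$ and the symmetry constraints built into the function space $H_s$. More precisely, I expect to pass to a subsequential limit of $\tilde\xi_n$ solving the linearized equation at $U$, conclude via non-degeneracy of $U$ that the limit is a linear combination of all translations $\partial U/\partial y_i$, and then use evenness in the coordinates that vanish at the shift center $x_{k_n,1}^{+}$ to kill all but the $y_1$ and $y_3$ components.

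The first step is to rewrite \eqref{10-13-12} in the translated variable: $\tilde\xi_n$ satisfies
\[
-\Delta \tilde\xi_n + V(|y + x_{k_n,1}^{+}|)\,\tilde\xi_n - p\, u_{k_n}(y + x_{k_n,1}^{+})^{p-1} \tilde\xi_n = 0.
\]
Since $|x_{k_n,1}^{+}| \sim r_{k_n} \to \infty$, assumption $({\bf V_1})$ gives $V(|y+x_{k_n,1}^{+}|) \to 1$ in $C^0_{\mathrm{loc}}$. For the nonlinear coefficient, I would use the decomposition $u_{k_n} = W_{r_{k_n},h_{k_n}} + \omega_{k_n}$ from Proposition~\ref{pro2.4}: in the shifted frame the bubble $U_{x_{k_n,1}^{+}}$ becomes $U$, while for every other center $x_{k_n,j}^{\pm}\neq x_{k_n,1}^{+}$ we have $|x_{k_n,j}^{\pm}-x_{k_n,1}^{+}|\to\infty$, so on any fixed $B_R(0)$ those bubbles tend to zero exponentially. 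Together with $\|\omega_{k_n}\|_{*}\to 0$ from \eqref{1-28-2}, this gives $u_{k_n}(\,\cdot+x_{k_n,1}^{+})\to U$ in $C^0_{\mathrm{loc}}$. The assumption $\|\xi_n\|_{*}=1$ translates, for $y$ in a bounded set, into $|\tilde\xi_n(y)|\le e^{-\tau|y|}+o(1)$, since the non-principal contributions $e^{-\tau|y+x_{k_n,1}^{+}-x_{k_n,j}^{\pm}|}$ sum to $O(k_n^{-\tau m})$. Standard $W^{2,q}_{\mathrm{loc}}$-elliptic estimates upgrade this bound to a uniform $C^{1,\alpha}_{\mathrm{loc}}$ bound; passing to a subsequence, $\tilde\xi_n\to\xi_0$ in $C^1_{\mathrm{loc}}$, with $|\xi_0(y)|\le Ce^{-\tau|y|}$ and
\[
-\Delta\xi_0 + \xi_0 - pU^{p-1}\xi_0 = 0 \quad \text{in } \mathbb{R}^N.
\]
In particular $\xi_0\in H^1(\mathbb{R}^N)$, so by the non-degeneracy of $U$ we get $\xi_0=\sum_{i=1}^{N} b_i\,\partial U/\partial y_i$.

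It remains to eliminate every $b_i$ except $b_1$ and $b_3$. This uses that the center $x_{k_n,1}^{+}=r_{k_n}(\sqrt{1-h_{k_n}^{2}},0,h_{k_n},\mathbf{0})$ has zero components in $y_2,y_4,\ldots,y_N$: hence the evenness of $\xi_n$ in each of these coordinates, as demanded by $H_s$, is preserved by the translation, so $\tilde\xi_n$ and therefore $\xi_0$ are even in $y_2,y_4,\ldots,y_N$. Since $\partial U/\partial y_i$ is odd in $y_i$ and even in the other coordinates, evenness in $y_i$ forces $b_i=0$ for $i\in\{2,4,5,\ldots,N\}$, leaving exactly $\xi_0=b_1\,\partial U/\partial y_1+b_3\,\partial U/\partial y_3$.

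The main obstacle I anticipate is the rigorous justification of the passage to the limit in the nonlinear term, specifically making the bound $u_{k_n}(\,\cdot+x_{k_n,1}^{+})^{p-1}\to U^{p-1}$ in $C^0_{\mathrm{loc}}$ quantitative: one must carefully combine the point-wise decay of $W_{r,h}$ away from all bubble centers with the estimate $\|\omega_k\|_{*}\le C k^{-\min\{p/2-\tau,1\}m}$ from \eqref{1-28-2} to absorb the error terms on compact sets without losing control on $\tilde\xi_n$. This is precisely the reason the authors had to revisit the construction under the stronger point-wise norm $\|\cdot\|_{*}$ in Section~\ref{sec2}, rather than the weaker $H^1$-smallness originally established in \cite{DM}.
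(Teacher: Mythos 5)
Your proof is correct and follows essentially the same route as the paper's (very terse) argument: pass to a local $C^1$ limit, identify the limiting equation as the linearization at $U$, invoke non-degeneracy of $U$, and eliminate the components $b_2,b_4,\ldots,b_N$ by the evenness inherited from $H_s$ (preserved since the shift center has zero coordinates in those directions). The additional details you supply — the quantitative convergence $u_{k_n}(\cdot+x^+_{k_n,1})^{p-1}\to U^{p-1}$ on compacts and the exponential bound ensuring $\xi_0\in H^1$ so that non-degeneracy applies — are exactly the points the paper leaves implicit, so this is a faithful, somewhat more complete version of the same proof.
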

	
	\begin{proof}
		
		In view of
		$|\tilde \xi_{n}|\le C$, we may assume that $\tilde \xi_{n}\to \xi$ in
		$ C_{loc}(\mathbb R^N)$.   Then $\xi$ satisfies
		
		\begin{equation}\label{3-8-4}
			-\Delta \xi+ \xi  = p U^{p-1}\xi, \quad \text{in}\; \mathbb R^N.
		\end{equation}
		This gives
		
		\begin{equation}\label{4-8-4}
			\xi =\sum_{i=1}^{N} b_{i}\frac{\partial U}{\partial y_i}.
		\end{equation}
		Since $\xi$ is even in $y_j$ for $j=2,4,5,\cdots,N$, it holds that $b_2=b_4=b_5\cdots=b_N=0$.
		
	\end{proof}
	
	According to  \eqref{10-9-4}, we shall decompose
	\begin{equation}\label{xi*}
		\xi_{n} (y)= b_{1n}  \sum_{j=1}^k \bigg(\frac{\partial U_{x^{+}_{j}}}{\partial r}+\frac{\partial U_{x^{-}_{j}}}{\partial r}\bigg)+ \frac{b_{3n}}{r}  \sum_{j=1}^k \bigg(\frac{\partial U_{x^{+}_{j}}}{\partial h}+\frac{\partial U_{x^{-}_{j}}}{\partial h}\bigg)+\xi_n^*,
	\end{equation}
	where
	\[
	\xi_n^*\in \mathbb{E}_k=\Big\{\phi:  \phi\in H_s \cap {C(\R^N)}, \quad \big<U_{x^{+}_{j}}^{p-1}  \overline{\mathbb{Z}}_{\ell j} ,\phi \big> = \big<U_{x^{+}_{j}}^{p-1}  \underline{\mathbb{Z}}_{\ell j} ,\phi \big> = 0,  \ j= 1, \cdots, k, \ \ell= 1,2\Big\}.
	\]
	It follows from Lemma~\ref{lem3.1} that $b_{1n}$ and $b_{3n}$ are bounded.

	\begin{lemma}\label{lem3.2}
		
		It holds
		\[
		\|\xi_n^*\|_*\le \frac{C }{k^{\min \{ \frac p2-\tau, 1 \}m}}.
		\]

	\end{lemma}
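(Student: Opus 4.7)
The plan is to regard $\xi_n^*$ as the output of the invertible linear machinery from Proposition~\ref{pro2.2}, and then to bound the associated source term in the $\|\cdot\|_*$-norm using the same techniques as in Lemma~\ref{lem2.3}. Setting $L:=-\Delta+V(|y|)-pW_{r_k,h_k}^{p-1}$ and
\[
\xi_n^{**}:=b_{1n}\sum_{j=1}^{k_n}\Bigl(\partial_r U_{x_j^+}+\partial_r U_{x_j^-}\Bigr)+\frac{b_{3n}}{r_k}\sum_{j=1}^{k_n}\Bigl(\partial_h U_{x_j^+}+\partial_h U_{x_j^-}\Bigr),
\]
so that $\xi_n=\xi_n^{**}+\xi_n^*$, and using $L_{k_n}\xi_n=0$ together with $L=L_{k_n}+p(u_k^{p-1}-W_{r_k,h_k}^{p-1})$, one obtains the identity
\[
L\xi_n^*=p\bigl(u_k^{p-1}-W_{r_k,h_k}^{p-1}\bigr)\xi_n-L\xi_n^{**}=:g_n.
\]
Since $\xi_n^*\in\mathbb{E}_k$, the pair $(\xi_n^*,0)$ is a solution of the system in Proposition~\ref{pro2.2} with data $f=g_n$; by uniqueness and the a priori bound therein, one obtains $\|\xi_n^*\|_*\le C\|g_n\|_*$, so it suffices to show $\|g_n\|_*\le Ck_n^{-\min\{p/2-\tau,1\}m}$.

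To estimate $L\xi_n^{**}$, I would use the fact that $(-\Delta+1-pU_{x_j^\pm}^{p-1})\partial_r U_{x_j^\pm}=0$ (and analogously for $\partial_h$), which reduces the computation to
\[
L(\partial_r U_{x_j^\pm})=(V(|y|)-1)\partial_r U_{x_j^\pm}+p\bigl(U_{x_j^\pm}^{p-1}-W_{r_k,h_k}^{p-1}\bigr)\partial_r U_{x_j^\pm},
\]
with an analogous identity for $\partial_h$. The first summand contributes $O(r_k^{-m})$ to $\|\cdot\|_*$ by the same splitting of $\{|y|\le|x_1^+|/2\}\cup\{|y|>|x_1^+|/2\}$ used in Lemma~\ref{lem2.3}, while the second is an inter-bubble interaction term of exactly the same type as $W_{r,h}^p-\sum(U_{x_j^+}^p+U_{x_j^-}^p)$ in Lemma~\ref{lem2.3}, giving $O(k_n^{-\min\{p/2-\tau,1\}m})$. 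The rescaling by $r_k^{-1}$ in front of the $\partial_h$ modes is precisely chosen because $|\partial_h x_j^\pm|\sim r_k$, so $r_k^{-1}\partial_h U_{x_j^\pm}$ has the same $\|\cdot\|_*$-size as $\partial_r U_{x_j^\pm}$; together with the boundedness of $b_{1n},b_{3n}$ from Lemma~\ref{lem3.1}, this yields $\|L\xi_n^{**}\|_*\le Ck_n^{-\min\{p/2-\tau,1\}m}$.

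For the remaining piece $p(u_k^{p-1}-W_{r_k,h_k}^{p-1})\xi_n$, I would use the pointwise bound
\[
\bigl|u_k^{p-1}-W_{r_k,h_k}^{p-1}\bigr|\le C\bigl(W_{r_k,h_k}^{p-2}|\omega_k|+|\omega_k|^{p-1}\bigr)\quad(p>2),
\]
and $|u_k^{p-1}-W_{r_k,h_k}^{p-1}|\le C|\omega_k|^{p-1}$ when $1<p\le 2$. Combined with $\|\omega_k\|_*\le Ck_n^{-\min\{p/2-\tau,1\}m}$ from \eqref{1-28-2} and $\|\xi_n\|_*=1$, matters reduce to absorbing products like $W_{r_k,h_k}^{p-2}\bigl(\sum_j e^{-\tau|y-x_j^\pm|}\bigr)^2$ into the master weight $\sum_j(e^{-\tau|y-x_j^+|}+e^{-\tau|y-x_j^-|})$. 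The main obstacle is exactly this absorption in the subquadratic range $p\in(1,2]$, where $t\mapsto t^{p-1}$ fails to be Lipschitz at $0$; I would handle it by the concavity inequality $|a^{p-1}-b^{p-1}|\le C|a-b|^{p-1}$ together with the same careful bookkeeping of exponential weights that drives the $p\le 2$ portion of Lemma~\ref{lem2.3}. Adding the two contributions then yields the claimed bound on $\|\xi_n^*\|_*$.
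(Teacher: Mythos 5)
Your decomposition $L\xi_n^*=p(u_k^{p-1}-W_{r_k,h_k}^{p-1})\xi_n-L\xi_n^{**}$ is correct and the plan of feeding this into Proposition~\ref{pro2.2} is sound, and the treatment of $-L\xi_n^{**}$ mirrors Lemma~\ref{lem2.3} exactly as you describe. The gap is in the coupling term $p(u_k^{p-1}-W_{r_k,h_k}^{p-1})\xi_n$ for $1<p\le 2$. Estimating it via the concavity inequality $|u_k^{p-1}-W^{p-1}|\le C|\omega_k|^{p-1}$ and $\|\xi_n\|_*=1$ gives at best
\[
\bigl\|p(u_k^{p-1}-W_{r_k,h_k}^{p-1})\xi_n\bigr\|_*\le C\sup_y|\omega_k|^{p-1}\le C\|\omega_k\|_*^{p-1}\sim k^{-(p-1)(\frac p2-\tau)m},
\]
which is strictly weaker than the required $k^{-(\frac p2-\tau)m}$ because $p-1<1$. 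No bookkeeping of weights fixes this: $\xi_n$ only carries the slow weight $\sum_j e^{-\tau|y-x_j^\pm|}$, so there is no exponential factor left to compensate the loss from raising $\omega_k$ to the power $p-1$. Note, too, that $g_n$ depends on $\xi_n^*$ itself, so a bound on $\|g_n\|_*$ in terms of $\|\xi_n\|_*$ alone cannot directly close the argument.

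The repair, and the essential difference from the paper, is to avoid placing $\xi_n^*$ on the right-hand side with the weak weight. Either split $\xi_n=\xi_n^{**}+\xi_n^*$ inside the coupling term: $p(u_k^{p-1}-W^{p-1})\xi_n^{**}$ carries the full decay $e^{-(1-\sigma)|y-x_j^\pm|}$, so near each $x_j^\pm$ one uses the mean value theorem (not concavity) to get $O(\|\omega_k\|_*)$, while far away the kernel modes are exponentially small; and the piece $p(u_k^{p-1}-W^{p-1})\xi_n^*$ has operator norm $\lesssim\|\omega_k\|_*^{p-1}=o(1)$ and is absorbed to the left in $\|\xi_n^*\|_*\le C\|g_n\|_*$. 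Or, as the paper does, work directly with $L_{k_n}=-\Delta+V-pu_k^{p-1}$: since $L_{k_n}\xi_n=0$, one gets $L_{k_n}\xi_n^*=-L_{k_n}\xi_n^{**}$ whose right side consists only of kernel modes multiplied by $(V-1)$ or $(u_k^{p-1}-U_{x_j^\pm}^{p-1})$, all of which inherit the full exponential decay of $\partial_{r,h}U_{x_j^\pm}$; one then uses a coercivity estimate $\|L_{k_n}\phi\|_*\ge\rho\|\phi\|_*$ on $\mathbb{E}_k$ (as in Lemma~\ref{lem2.1}), which only needs $p(u_k^{p-1}-W^{p-1})$ to be $o(1)$ in operator norm, not polynomially small.
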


	\begin{proof}
		
		We have
		
		\[
		\begin{split}
			L_{k_n} \xi^*_n = & -b_{1n}  \bigl( V(|y|) -1\bigr) \sum_{j=1}^k \bigg(\frac{\partial U_{x^{+}_{j}}}{\partial r}+\frac{\partial U_{x^{-}_{j}}}{\partial r}\bigg)-b_{3n} r^{-1}  \bigl(  V(|y|) -1\bigr) \sum_{j=1}^k \bigg(\frac{\partial U_{x^{+}_{j}}}{\partial h}+\frac{\partial U_{x^{-}_{j}}}{\partial h}\bigg)\\
			&+
			b_{1n}  p  \sum_{j=1}^k \bigl(  u_{k_n}^{p-1}
			-U_{x^{+}_{j}}^{p-1}\bigr)  \frac{\partial U_{x^{+}_{j}}}{\partial r}+b_{3n} r^{-1}  p  \sum_{j=1}^k \bigl(  u_{k_n}^{p-1}
			-U_{x^{+}_{j}}^{p-1}\bigr)  \frac{\partial U_{x^{+}_{j}}}{\partial h}\\
			&+b_{1n}  p  \sum_{j=1}^k \bigl(  u_{k_n}^{p-1}
			-U_{x^{-}_{j}}^{p-1}\bigr)  \frac{\partial U_{x^{-}_{j}}}{\partial r}+b_{3n} r^{-1}  p  \sum_{j=1}^k \bigl(  u_{k_n}^{p-1}
			-U_{x^{-}_{j}}^{p-1}\bigr)  \frac{\partial U_{x^{-}_{j}}}{\partial h}.
		\end{split}
		\]
		
		Similar to the proof of Lemma~\ref{lem2.3}, using \eqref{1-28-2},
		we can  prove
		
		\[
		\bigg\|\bigl(  V(|y|) -1\bigr) \sum_{j=1}^k \bigg(\frac{\partial U_{x^{+}_{j}}}{\partial r}+\frac{\partial U_{x^{-}_{j}}}{\partial r}\bigg) \bigg\|_*\le \frac{C}{ r^m},
		\]
		and
		\[
		\bigg\|\bigl(  V(|y|) -1\bigr)r^{-1} \sum_{j=1}^k \bigg(\frac{\partial U_{x^{+}_{j}}}{\partial h}+\frac{\partial U_{x^{-}_{j}}}{\partial h}\bigg) \bigg\|_*\le \frac{C}{ r^m}.
		\]
		Meanwhile,
		\[
		\bigg\| \sum_{j=1}^k \bigl(  u_{k_n}^{p-1}
		-U_{x^{+}_{j}}^{p-1}\bigr)  \frac{\partial U_{x^{+}_{j}}}{\partial r}+\sum_{j=1}^k \bigl(  u_{k_n}^{p-1}
		-U_{x^{-}_{j}}^{p-1}\bigr)  \frac{\partial U_{x^{-}_{j}}}{\partial r}
		\bigg\|_*\le \frac{C}{ r^m} +Ce^{- \min\{\frac p2-\tau, 1\}(|x_2^+-x_1^+|+|x_1^+-x_1^-|)},
		\]
		and
		\[
		\bigg\| r^{-1}\sum_{j=1}^k \bigl(  u_{k_n}^{p-1}
		-U_{x^{+}_{j}}^{p-1}\bigr)  \frac{\partial U_{x^{+}_{j}}}{\partial h}+r^{-1}\sum_{j=1}^k \bigl(  u_{k_n}^{p-1}
		-U_{x^{-}_{j}}^{p-1}\bigr)  \frac{\partial U_{x^{-}_{j}}}{\partial h}
		\bigg\|_*\le \frac{C}{ r^m} +Ce^{- \min\{\frac p2-\tau, 1\}(|x_2^+-x_1^+|+|x_1^+-x_1^-|)}.
		\]

		Moreover, from  $\xi_n^*\in \mathbb E_k$, similar to Lemma~\ref{lem2.1}, we can prove that there exists $\rho>0$, independent of $k_n$, such that
		
		\[
		\|  L_{k_n} \xi^*_n\|_*\ge  \rho \|  \xi^*_n\|_*.
		\]
		Thus,
		\[
		\|\xi_n^*\|_*\le \frac{C}{ |x^{+}_{1}|^m}+  C e^{- \min\{\frac p2-\tau, 1\}|x^{+}_{2}-x^{+}_{1}|}.
		\]
		
	\end{proof}

	\begin{lemma}\label{lem3.3}
		
		It holds
		
		\begin{equation}\label{1-1-4-12}
			\tilde \xi_{n}\to 0,
		\end{equation}
		uniformly in $C^1(B_R(0))$ for any $R>0$.
		
	\end{lemma}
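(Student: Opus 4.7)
The plan is to apply two local Pohozaev identities at the bubble $x_1^+$, in the directions $y_1$ and $y_3$ that generate the parameter deformations $r$ and $h$ of the tower, and extract from them a non-degenerate linear system for $(b_{1n},b_{3n})$ that forces these coefficients to zero. Combined with the decomposition \eqref{xi*}, Lemma~\ref{lem3.2}, and the pointwise limits
\[
\partial_r U_{x_1^+}(\,\cdot\,+x_1^+)\longrightarrow -\partial_{y_1}U,\qquad r^{-1}\partial_h U_{x_1^+}(\,\cdot\,+x_1^+)\longrightarrow -\partial_{y_3}U\quad\text{in }C^1_{\mathrm{loc}},
\]
this will give $\tilde\xi_n\to 0$.

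Testing $L_{k_n}\xi_n=0$ against $\partial_{y_i}u_k$ on a bounded domain $\Omega\subset\R^N$ and using the commutator $L_{k_n}(\partial_{y_i}u_k)=-\partial_{y_i}V\cdot u_k$ obtained by differentiating \eqref{1-27-11}, Green's second identity (as in Appendix~\ref{appendixA}) gives the local Pohozaev relation
\[
\int_\Omega\partial_{y_i}V\,u_k\xi_n\,dy=\int_{\partial\Omega}\bigl(\xi_n\,\partial_\nu\partial_{y_i}u_k-\partial_{y_i}u_k\,\partial_\nu\xi_n\bigr)\,dS,\qquad i=1,3.
\]
I would choose $\Omega=B_\delta(x_1^+)$ for a fixed small $\delta>0$. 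Plugging \eqref{xi*} into the left-hand side and Taylor-expanding $\partial_{y_i}V$ at $y=x_1^+$ using $({\bf V_1}),({\bf V_2})$, the leading $V'(r)$-term vanishes by the parity $\int U\partial_{z_\ell}U\,dz=0$, and the first-order Taylor correction combined with $\int z_\ell U\partial_{z_\ell}U\,dz=-\tfrac12\int U^2$ produces the diagonal contributions $\tfrac{V''(r)}{2}\bigl(\int U^2\bigr)b_{1n}$ and $\tfrac{V'(r)}{2r}\bigl(\int U^2\bigr)b_{3n}$, both of exact order $r^{-(m+2)}$ with nonzero limit coefficients coming from $({\bf V_1}),({\bf V_2})$; the cross terms vanish by $z_1$- or $z_3$-parity. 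The $\xi_n^*$ contribution is $o(r^{-(m+2)})$ by Lemma~\ref{lem3.2} together with $\partial_{y_i}V=O(r^{-(m+1)})$.

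For the right-hand side boundary flux on $\partial B_\delta(x_1^+)$, denoting by $\xi_0:=-b_{1n}\partial_{z_1}U-b_{3n}\partial_{z_3}U$ the leading part of $\xi_n$ in the shifted coordinates $z=y-x_1^+$, the leading integrand $\xi_0\partial_\nu\partial_{z_i}U-\partial_{z_i}U\,\partial_\nu\xi_0$ vanishes identically by Green's identity applied to the two kernel elements $\xi_0,\partial_{z_i}U$ of $L_0:=-\Delta+1-pU^{p-1}$; the remaining corrections, coming from $u_k-U_{x_1^+}$ (adjacent bubbles plus $\omega_k$) and from $\xi_n-\xi_0$ (other-bubble contributions plus $\xi_n^*$), are controlled using the sharp pointwise estimate \eqref{1-28-2}, the exponential decay of neighbouring bubbles, and the orthogonality defining $\mathbb E_k$, yielding a boundary flux of strictly smaller order than $r^{-(m+2)}$. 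Combining the two Pohozaev identities and rescaling by $r^{-(m+2)}$ then yields $\alpha_1 b_{1n}=o(1)$ and $\alpha_3 b_{3n}=o(1)$ with $\alpha_1,\alpha_3\ne 0$, whence $b_{1n},b_{3n}\to 0$.

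\textbf{Main obstacle.} The delicate step is the estimate of the boundary flux: its naively dominant part vanishes only because $\xi_0$ and $\partial_{z_i}U$ both belong to the kernel of the limiting operator $L_0$, and one must then carefully control the next-order corrections using both the point-wise $\|\cdot\|_*$-bound for $\omega_k$ from Section~\ref{sec2} (which is exactly why the authors had to revisit the existence theorem in the $*$-norm) and the orthogonality built into $\mathbb E_k$. Equally delicate is the vanishing of the leading $V'(r)$-term on the LHS, which forces one to push the Taylor expansion of $\partial_{y_i}V$ to the $V''$-level and to exploit $z$-parity together with the fact that $x_1^+$ lies far from the origin of $\R^N$.
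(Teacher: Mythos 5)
Your overall framework---the local Pohozaev identities at $x_1^+$ in the directions $y_1,y_3$, the decomposition \eqref{xi*}, the reduction to a linear system for $(b_{1n},b_{3n})$---matches the paper's. Your Pohozaev identity is also algebraically equivalent to \eqref{2-20-3}. The gap is in the claim that ``the boundary flux is of strictly smaller order than $r^{-(m+2)}$.'' This is false, and it inverts the actual mechanism of the proof. The boundary flux is the \emph{dominant} term, not the subdominant one.

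Concretely: after the leading kernel--kernel cancellation you correctly identify, the next contribution to the boundary flux comes from pairing $\xi_0$ with the nearest-neighbour bubbles $U_{x_2^+},U_{x_k^+},U_{x_1^-}$ inside $\partial_{y_i}u_k$ (and symmetrically $\partial_{y_i}U_{x_1^+}$ with the nearest-neighbour pieces of $\xi_n$). Converting these boundary integrals back to bulk integrals via Green's identity produces precisely the interaction quantities that the paper computes in \eqref{10-27-4}--\eqref{60-27-4} and \eqref{10-27-41}--\eqref{60-27-67}, namely terms of size $\frac{U(|x^+_2-x^+_1|)}{k^2}$ for the $y_1$-direction and $U(|x^+_1-x^-_1|)$ for the $y_3$-direction, each multiplying $b_{1n}$ and $b_{3n}$ linearly with coefficients $D_1,\dots,D_4$. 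By \eqref{B1}--\eqref{B13} and \eqref{40},
\[
\frac{U(|x^+_2-x^+_1|)}{k^2}\ \sim\ U(|x^+_1-x^-_1|)\ \sim\ \frac{r_k}{k}\cdot\frac{1}{r_k^{m+2}}\ \gg\ \frac{1}{r_k^{m+2}},
\]
so these boundary contributions strictly dominate the interior integral $\int u\xi\,\partial_{y_i}V\sim r_k^{-(m+2)}$. Your purported conclusion ``$\alpha_i b_{in}=o(1)$ with $\alpha_i\ne 0$ coming from $V''$ and $V'/r$'' would therefore be swamped by a larger unaccounted boundary term, and the system would not close.

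The paper's proof runs the logic in the opposite direction: it computes the leading coefficients of $b_{1n},b_{3n}$ in the boundary flux (this is the bulk of the work in Lemma~\ref{lem3.3} and Lemma~\ref{lemB.2}), observes that the interior $\partial V$-integral is $o$ of those leading coefficients, and concludes that the boundary-flux coefficients of $b_{1n},b_{3n}$ must themselves be $o(1)$, forcing $b_{1n},b_{3n}\to 0$. In particular, the potential's second-order expansion $({\bf V_1})$--$({\bf V_2})$ is used only to control the size of the interior integral (to verify it is negligible), not to produce the non-degenerate $2\times 2$ system you describe; that system comes from the bubble-interaction constants $D_1,\dots,D_4$ in the boundary flux. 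Your proposal correctly notes the two delicate points (the kernel cancellation in the flux and the $V'$-parity cancellation in the interior), but it misidentifies which of the two surviving quantities wins, and consequently it extracts the linear constraint from the wrong side of the Pohozaev identity.
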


	\begin{proof}
		
		We apply the identities in Lemma~\ref{l0-14-12}  in the domain  $\Omega=B_{\frac12 |x^{+}_{2}-x^{+}_{1}|}(x^{+}_{1})$.  For simplicity, we drop the subscript $n$.
		
		For $j=1,3$, we define the following bilinear form

		\begin{equation}\label{1-20-3}
			\begin{split}
				I_j(u, \xi, \Omega)= & - \int_{\partial \Omega}\frac{\partial u}{\partial \nu} \frac{\partial \xi}{\partial y_j}- \int_{\partial \Omega}\frac{\partial \xi}{\partial \nu} \frac{\partial u}{\partial y_j}+
				\int_{\partial \Omega} \bigl\langle \nabla u, \nabla \xi\bigr\rangle \nu_j+\int_{\partial \Omega} u \xi \nu_j,
			\end{split}
		\end{equation}
		where $\nu$ is the outward unit normal of $\partial\Omega$.
		Then it follows Lemma~\ref{l0-14-12} that
		
		\begin{equation}\label{2-20-3}
			I_j(u, \xi,  \Omega) = \int_\Omega u\xi \frac{\partial V }{\partial y_j}-\int_{\partial \Omega} \bigl( V(|y|)-1\bigr)u \xi \nu_j+\int_{\partial \Omega} u^p \xi \nu_j.
		\end{equation}
		Combining \eqref{1-27-11} and \eqref{10-13-12}, we can obtain
		\begin{equation}\label{3-20-3}
			\begin{split}
				I_j(u, \xi,  \Omega) = &\int_\Omega \bigl( -\Delta u +   u - u^p\bigr) \frac{\partial \xi}{\partial y_j} +
				\int_\Omega \bigl( -\Delta \xi  +  \xi -p u^{p-1} \xi \bigr)\frac{\partial u }{\partial y_j}+\int_{\partial \Omega} u^p \xi \nu_j.
			\end{split}
		\end{equation}
		
		Noting that
		
		\[
		u_{k_n}=\sum_{j=1}^k  \bigg(U_{x^{+}_{j}}(y)+  U_{x^{-}_{j}}(y)   \bigg)+\omega_{k_n},
		\]
		\begin{equation*}
			\xi_{n} (y)= b_{1n}\sum_{j=1}^k  \bigg(\frac{\partial U_{x^{+}_{j}}(y)}{\partial r}+   \frac{\partial U_{x^{-}_{j}}(y)}{\partial r} \bigg)+  b_{3n}r^{-1}\sum_{j=1}^k  \bigg(\frac{\partial U_{x^{+}_{j}}(y)}{\partial h}+   \frac{\partial U_{x^{-}_{j}}(y)}{\partial h} \bigg)+\xi_n^*,
		\end{equation*}
		
		\iffalse
		For $\varsigma \in \{+,-\}$, we have
		
		\begin{equation}\label{3-27-4}
			\begin{split}
				&\frac{\partial U_{x^{\varsigma}_{j}}}{\partial r}= \frac{\partial U\bigl(y- ( r\sqrt{1-h^2}\cos\frac{2(j-1)\pi}k, r\sqrt{1-h^2}\sin\frac{2(j-1)\pi}k,\varsigma rh, 0)\bigr)}{\partial r}\\
				=&  -U'(|y- x^{\varsigma}_{j}|) \bigg\langle \frac{y-x^{\varsigma}_{j}}{|y-x^{\varsigma}_{j}|}, \big( \sqrt{1-h^2}\cos\frac{2(j-1)\pi}k, \sqrt{1-h^2}\sin\frac{2(j-1)\pi}k,\varsigma h, 0\big)\bigg\rangle,
			\end{split}
		\end{equation}
		For $\varsigma \in \{+,-\}$, we have
		\begin{equation}\label{3-27-5}
			\begin{split}
				&\frac{\partial U_{x^{\varsigma}_{j}}}{\partial h}= \frac{\partial U\bigl(y- ( r\sqrt{1-h^2}\cos\frac{2(j-1)\pi}k, r\sqrt{1-h^2}\sin\frac{2(j-1)\pi}k,\varsigma h, 0)\bigr)}{\partial h}\\
				=&  -U'(|y- x^{\varsigma}_{j}|) \bigg\langle \frac{y-x^{\varsigma}_{j}}{|y-x^{\varsigma}_{j}|}, \big( \frac{-rh}{\sqrt{1-h^2}}\cos\frac{2(j-1)\pi}k, \frac{-rh}{\sqrt{1-h^2}}\sin\frac{2(j-1)\pi}k,\varsigma r, 0\big)\bigg\rangle.
			\end{split}
		\end{equation}
		\fi
		
		and
		\[
		-\Delta \frac{\partial U_{x^{\varsigma}_{j}}}{\partial r}  +  \frac{\partial U_{x^{\varsigma}_{j}}}{\partial r} -p U_{x^{\varsigma}_{j}}^{p-1}\frac{\partial U_{x^{\varsigma}_{j}}}{\partial r}=0,
		\]
		
		\[
		-\Delta \frac{\partial U_{x^{\varsigma}_{j}}}{\partial h}  +  \frac{\partial U_{x^{\varsigma}_{j}}}{\partial h} -p U_{x^{\varsigma}_{j}}^{p-1}\frac{\partial U_{x^{\varsigma}_{j}}}{\partial h}=0.
		\]
		By direct computation, we can  check  that for $j\ge 2$,  $i\ne j$ and $\varsigma \in \{+,-\}$,
		
		\begin{equation}\label{1-27-4}
			\int_\Omega U_{x^{\varsigma}_{j}}^{p-1}\frac{\partial U_{x^{\varsigma}_{j}}}{\partial r}\frac{\partial U_{x^{\varsigma}_{i}} }{\partial y_1}= O\Bigl( e^{-(\frac {p+1}2 -\sigma) |x^{+}_{2}-x^{+}_{1}|}+e^{-(\frac {p+1}2 -\sigma) |x^{+}_{1}-x^{-}_{1}|}\Bigr),
		\end{equation}
		and
		\begin{equation}\label{2-27-4}
			\int_\Omega U_{x^{\varsigma}_{j}}^{p-1}\frac{\partial U_{x^{\varsigma}_{i}}}{\partial r}\frac{\partial U_{x^{\varsigma}_{j}} }{\partial y_1}= O\Bigl( e^{-(\frac {p+1}2 -\sigma) |x^{+}_{2}-x^{+}_{1}|}+e^{-(\frac {p+1}2 -\sigma) |x^{+}_{1}-x^{-}_{1}|}\Bigr),
		\end{equation}
		where $\sigma>0$ is any fixed small constant.
		Since
		\begin{equation*}
			\frac{\partial U_{x^{+}_{1}}}{\partial r}=-\sqrt{1-h^2}\frac{\partial U_{x^{+}_{1}}}{\partial y_1}-h\frac{\partial U_{x^{+}_{1}}}{\partial y_3},
		\end{equation*}
		\begin{equation*}
			\frac{\partial U_{x^{-}_{1}}}{\partial r}=-\sqrt{1-h^2}\frac{\partial U_{x^{-}_{1}}}{\partial y_1}+h\frac{\partial U_{x^{-}_{1}}}{\partial y_3},
		\end{equation*}
		then combining \eqref{3-20-3}--\eqref{2-27-4} and Lemma \ref{lemB.2}, we obtain
		
		\begin{equation}\label{10-27-4}
			\begin{split}
				& I_1\left(\sum_{j=1}^k \bigg(U_{x^{+}_{j}}(y)+  U_{x^{-}_{j}}(y)   \bigg) ,  \sum_{i=1}^k \bigg(\frac{\partial U_{x^{+}_{i}}(y)}{\partial r}+   \frac{\partial U_{x^{-}_{i}}(y)}{\partial r} \bigg),  \Omega\right)
				\\
				=&p \int_\Omega U_{x^{+}_{1}}^{p-1}\frac{\partial U_{x^{+}_{1}}}{\partial r}\Bigl( \frac{\partial U_{x^{+}_{2}} }{\partial y_1} +\frac{\partial U_{x^{+}_{k}} }{\partial y_1}+\frac{\partial U_{x^{-}_{1}} }{\partial y_1}\Bigr)
				-p \int_\Omega U_{x^{+}_{1}}^{p-1}\Bigl(\frac{\partial U_{x^{+}_{2}}}{\partial r}+  \frac{\partial U_{x^{+}_{k}}}{\partial r} +\frac{\partial U_{x^{-}_{1}} }{\partial r}\Bigr)
				\frac{\partial U_{x^{+}_{1}} }{\partial y_1}\\
				&+O\bigg(e^{-(\frac {p+1}2 -\sigma) |x^{+}_{2}-x^{+}_{1}|}+e^{-(\frac {p+1}2 -\sigma) |x^{+}_{1}-x^{-}_{1}|}\bigg)\\
				=&  \int_{\mathbb R^N} U_{x^{+}_{1}}^{p}\frac{\partial }{\partial y_1}\left(\sqrt{1-h^2} \Bigg(\frac{\partial U_{x^{+}_{2}} }{\partial y_1}+\frac{\partial U_{x^{+}_{k}} }{\partial y_1}\bigg)+\bigg(\frac{\partial U_{x^{+}_{2}}}{\partial r}+\frac{\partial U_{x^{+}_{k}}}{\partial r}\bigg)\right)\\
				& +\int_{\mathbb R^N} U_{x^{+}_{1}}^{p}\frac{\partial }{\partial y_3}\left(h\Bigg(\frac{\partial U_{x^{+}_{2}} }{\partial y_1}+\frac{\partial U_{x^{+}_{k}} }{\partial y_1}+\frac{\partial U_{x^{-}_{1}} }{\partial y_1}\bigg)\right) +O\bigg(e^{-(\frac {p+1}2 -\sigma) |x^{+}_{2}-x^{+}_{1}|}+e^{-(\frac {p+1}2 -\sigma) |x^{+}_{1}-x^{-}_{1}|}\bigg)\\
				= &\bigl( D_{11}+o(1)\bigr)\frac{\partial }{\partial y_1} \left(\sqrt{1-h^2} \Bigg(\frac{\partial U_{x^{+}_{2}} }{\partial y_1}+\frac{\partial U_{x^{+}_{k}} }{\partial y_1}\bigg)+\bigg(\frac{\partial U_{x^{+}_{2}}}{\partial r}+\frac{\partial U_{x^{+}_{k}}}{\partial r}\bigg)\right)\Bigg|_{ y= {x^{+}_{1}}}\\
				&+\bigl( D_{12}+o(1)\bigr) \frac{\partial }{\partial y_3}\left(h\Bigg(\frac{\partial U_{x^{+}_{2}} }{\partial y_1}+\frac{\partial U_{x^{+}_{k}} }{\partial y_1}+\frac{\partial U_{x^{-}_{1}} }{\partial y_1}\bigg)\right)\Bigg|_{ y= {x^{+}_{1}}}
				+O\bigl( e^{-(\frac {p+1}2 -\sigma)|x^{+}_{2}-x^{+}_{1}|}\bigr),
			\end{split}
		\end{equation}
		where $D_{11},D_{12}$ are positive constants.
		
		For $\varsigma \in \{+,-\}$, we have
		
		\begin{equation}\label{3-27-4}
			\begin{split}
				&\frac{\partial U_{x^{\varsigma}_{j}}}{\partial r}= \frac{\partial U\bigl(y- ( r\sqrt{1-h^2}\cos\frac{2(j-1)\pi}k, r\sqrt{1-h^2}\sin\frac{2(j-1)\pi}k,\varsigma rh, 0)\bigr)}{\partial r}\\
				=&  -U'(|y- x^{\varsigma}_{j}|) \bigg\langle \frac{y-x^{\varsigma}_{j}}{|y-x^{\varsigma}_{j}|}, \big( \sqrt{1-h^2}\cos\frac{2(j-1)\pi}k, \sqrt{1-h^2}\sin\frac{2(j-1)\pi}k,\varsigma h, 0\big)\bigg\rangle,
			\end{split}
		\end{equation}
		then
		\[
		\begin{split}
			&\quad \sqrt{1-h^2}\frac{\partial U_{x^{+}_{2}} }{\partial y_1}+\frac{\partial U_{x^{+}_{2}}}{\partial r}\\
&=  U'(|y- {x^{+}_{2}}|) \bigg\langle \frac{y-{x^{+}_{2}}}{|y-{x^{+}_{2}}|},\sqrt{1-h^2} \big((1,0,0,0)-( \cos\frac{2\pi}k, \sin\frac{2\pi}k, \frac{h}{\sqrt{1-h^2}},0)\big)\bigg\rangle.
		\end{split}
		\]
		Therefore,

		\[
		\begin{split}
			& \frac{\partial }{\partial y_1}\Bigl( \sqrt{1-h^2}\frac{\partial U_{x^{+}_{2}} }{\partial y_1}+\frac{\partial U_{x^{+}_{2}}}{\partial r}\Bigr)
			\\
			= &  U''(|y- x^{+}_{2}|)\frac{(y-x^{+}_{2})_1}{|y-x^{+}_{2}|} \bigg\langle \frac{y-x^{+}_{2}}{|y-x^{+}_{2}|}, \sqrt{1-h^2} \big((1,0,0,0)-( \cos\frac{2\pi}k, \sin\frac{2\pi}k, \frac{h}{\sqrt{1-h^2}},0)\big)\bigg\rangle\\
			&+U'(|y- x^{+}_{2}|) \bigg\langle \frac{(1,0,0,0)}{|y-x^{+}_{2}|}- (y-x^{+}_{2})\frac{(y-x^{+}_{2})_1}{|y-x^{+}_{2}|^3},\\
 &\sqrt{1-h^2}\big((1,0,0,0)-( \cos\frac{2\pi}k, \sin\frac{2\pi}k, \frac{h}{\sqrt{1-h^2}},0)\big)\bigg\rangle.
		\end{split}
		\]
		Noting that
		\[
		(1, 0,0,0)-( \cos\frac{2\pi}k, \sin\frac{2\pi}k, \frac{h}{\sqrt{1-h^2}},0)= \bigl( \frac{2\pi^2}{k^2}+O\bigl(\frac1{k^4}\bigr), -\frac{2\pi}k+O\bigl(\frac1{k^2}\bigr),-\frac{h}{\sqrt{1-h^2}}, 0\bigr),
		\]
		\[
		|x^{+}_{2}-x^{+}_{1}|=2r_k\sqrt{1-h^2}\sin\frac{\pi}k= \frac {2\pi \sqrt{1-h^2}r_k}k+O\bigl( \frac{ r_k}{k^3}\bigr),
		\]
		\[
		(x^{+}_{2}-x^{+}_{1})_1= r_k\sqrt{1-h^2} \bigl(1- \cos\frac{2\pi}k\bigr)= \sqrt{1-h^2}\frac{2\pi^2 r_k}{k^2}+O\bigl( \frac{ r_k}{k^4}\bigr),
		\]
		\[
		(x^{+}_{2}-x^{+}_{1})_2= -r_k\sqrt{1-h^2} \sin \frac{2\pi}k =- \frac {2\pi \sqrt{1-h^2}r_k}{k}+O\bigl( \frac{r_k}{k^3}\bigr),
		\]
		and
		\[
		(x^{+}_{2}-x^{+}_{1})_3= 0.
		\]
		So we obtain
		
		\[
		\begin{split}
			& \frac{\partial }{\partial y_1}\Bigl(\sqrt{1-h^2} \frac{\partial U_{x^{+}_{2}} }{\partial y_1}+\frac{\partial U_{x^{+}_{2}}}{\partial r}\Bigr)\Bigr|_{y=x^{+}_{1}}
			\\
			= &  U''(|x^{+}_{2}-x^{+}_{1}|)\frac{(x^{+}_{2}-x^{+}_{1})_1}{|x^{+}_{2}-x^{+}_{1}|}  \frac{(x^{+}_{2}-x^{+}_{1})_2}{|x^{+}_{2}-x^{+}_{1}|}\bigl(-\sqrt{1-h^2} \sin\frac{2\pi}k\bigr)\\
			& +U'(|x^{+}_{2}-x^{+}_{1}|) \Bigl( \frac{1}{|x^{+}_{2}-x^{+}_{1}|}  \sqrt{1-h^2}\bigl( 1-\cos\frac{2\pi}k\bigr)  + \frac{(x^{+}_{2}-x^{+}_{1})_2(x^{+}_{2}-x^{+}_{1})_1}{|x^{+}_{2}-x^{+}_{1}|^3}\sqrt{1-h^2} \sin\frac{2\pi}k\Bigr)\\
&+O\bigl(\frac1{k^3}\bigr)\\
			=&  U(|x^{+}_{2}-x^{+}_{1}|)\sqrt{1-h^2}\frac{(x^{+}_{2}-x^{+}_{1})_1}{|x^{+}_{2}-x^{+}_{1}|}\Bigl(\sin\frac{2\pi}k- \frac1{r_k }+ \frac{\sqrt{1-h^2}}{|x^{+}_{2}-x^{+}_{1}|} \sin\frac{2\pi}k
			\Bigr)\Bigl( 1+o(1)\Bigr)\\
			=& \frac{ 2\pi^2 \sqrt{1-h^2}}{k^2}U(|x^{+}_{2}-x^{+}_{1}|)\Bigl( 1+o(1)\Bigr).
		\end{split}
		\]
		And,
		\[
		\begin{split}
			& \frac{\partial }{\partial y_1}\Bigl(\sqrt{1-h^2} \frac{\partial U_{x^{+}_{k}} }{\partial y_1}+\frac{\partial U_{x^{+}_{k}}}{\partial r}\Bigr)\Bigr|_{y=x^{+}_{1}}= \frac{ 2\pi^2 \sqrt{1-h^2}}{k^2}U(|x^{+}_{k}-x^{+}_{1}|)\Bigl( 1+o(1)\Bigr).
		\end{split}
		\]
		
		On the other hand,
		\begin{equation}
			\begin{split}
				\frac{\partial }{\partial y_3}&\left(h\Bigg(\frac{\partial U_{x^{+}_{2}} }{\partial y_1}+\frac{\partial U_{x^{+}_{k}} }{\partial y_1}+\frac{\partial U_{x^{-}_{1}} }{\partial y_1}\bigg)\right)\Bigg|_{y=x^{+}_{1}}\\
				=&h \left( U''(|y- {x^{+}_{2}}|)  \frac{(y- {x^{+}_{2}})_1(y- {x^{+}_{2}})_3}{|y- {x^{+}_{2}}|^2}-U'(|y- {x^{+}_{2}}|)  \frac{(y- {x^{+}_{2}})_1(y- {x^{+}_{2}})_3}{|y- {x^{+}_{2}}|^3}\right)\Bigg|_{y=x^{+}_{1}}\\
				&+h \left( U''(|y- {x^{+}_{k}}|)  \frac{(y- {x^{+}_{k}})_1(y- {x^{+}_{k}})_3}{|y- {x^{+}_{k}}|^2}-U'(|y- {x^{+}_{k}}|)  \frac{(y- {x^{+}_{k}})_1(y- {x^{+}_{k}})_3}{|y- {x^{+}_{k}}|^3}\right)\Bigg|_{y=x^{+}_{1}}\\
				&+h \left( U''(|y- {x^{-}_{1}}|)  \frac{(y-{x^{-}_{1}})_1(y-{x^{-}_{1}})_3}{|y-{x^{-}_{1}}|^2}-U'(|y- {x^{-}_{1}}|)  \frac{(y-{x^{-}_{1}})_1(y-{x^{-}_{1}})_3}{|y-{x^{-}_{1}}|^3}\right)\Bigg|_{y=x^{+}_{1}}\\
				=&0.
			\end{split}
		\end{equation}
		So we have proved
		\begin{equation}\label{60-27-4}
			\begin{split}
				I_1&\left(\sum_{j=1}^k \bigg(U_{x^{+}_{j}}(y)+  U_{x^{-}_{j}}(y)   \bigg) ,  \sum_{i=1}^k \bigg(\frac{\partial U_{x^{+}_{i}}(y)}{\partial r}+   \frac{\partial U_{x^{-}_{i}}(y)}{\partial r} \bigg),  \Omega\right)
				\\
				& = \bigl( D_1+o(1)\bigr)  \frac{ \sqrt{1-h^2}}{k^2}U(|x^{+}_{2}-x^{+}_{1}|)
				+O\bigl( e^{-(\frac {p+1}2 -\sigma)|x^{+}_{2}-x^{+}_{1}|}\bigr),
			\end{split}
		\end{equation}
		where $D_1=2\pi^2D_{11}$.

		Similar to \eqref{1-27-4} and \eqref{2-27-4}, we can  check  that for $j\ge 2$,  $i\ne j$ and $\varsigma \in \{+,-\}$,
		
		\begin{equation}\label{h4}
			\int_\Omega U_{x^{\varsigma}_{j}}^{p-1}\frac{\partial U_{x^{\varsigma}_{j}}}{\partial h}\frac{\partial U_{x^{\varsigma}_{i}} }{\partial y_1}= O\Bigl(r_k e^{-(\frac {p+1}2 -\sigma) |x^{+}_{2}-x^{+}_{1}|}+r_k e^{-(\frac {p+1}2 -\sigma) |x^{+}_{1}-x^{-}_{1}|}\Bigr),
		\end{equation}
		and
		\begin{equation}\label{h5}
			\int_\Omega U_{x^{\varsigma}_{j}}^{p-1}\frac{\partial U_{x^{\varsigma}_{i}}}{\partial h}\frac{\partial U_{x^{\varsigma}_{j}} }{\partial y_1}= O\Bigl(r_k e^{-(\frac {p+1}2 -\sigma) |x^{+}_{2}-x^{+}_{1}|}+r_k e^{-(\frac {p+1}2 -\sigma) |x^{+}_{1}-x^{-}_{1}|}\Bigr).
		\end{equation}
		And,
		\begin{equation*}
			\frac{\partial U_{x^{+}_{1}}}{\partial h}=\frac{rh}{\sqrt{1-h^2}}\frac{\partial U_{x^{+}_{1}}}{\partial y_1}-r\frac{\partial U_{x^{+}_{1}}}{\partial y_3},
		\end{equation*}
		\begin{equation*}
			\frac{\partial U_{x^{-}_{1}}}{\partial h}=\frac{rh}{\sqrt{1-h^2}}\frac{\partial U_{x^{-}_{1}}}{\partial y_1}+r\frac{\partial U_{x^{-}_{1}}}{\partial y_3}.
		\end{equation*}

		Combining \eqref{3-20-3}, \eqref{h4}--\eqref{h5} and Lemma \ref{lemB.2}, we obtain
		
		\begin{equation}\label{10-27-41}
			\begin{split}
				& I_1\left(\sum_{j=1}^k \bigg(U_{x^{+}_{j}}(y)+  U_{x^{-}_{j}}(y)   \bigg) ,  \sum_{i=1}^k \bigg(\frac{\partial U_{x^{+}_{i}}(y)}{\partial h}+   \frac{\partial U_{x^{-}_{i}}(y)}{\partial h} \bigg),  \Omega\right)
				\\
				=&p \int_\Omega U_{x^{+}_{1}}^{p-1}\frac{\partial U_{x^{+}_{1}}}{\partial h}\Bigl( \frac{\partial U_{x^{+}_{2}} }{\partial y_1} +\frac{\partial U_{x^{+}_{k}} }{\partial y_1}+\frac{\partial U_{x^{-}_{1}} }{\partial y_1}\Bigr)
				-p \int_\Omega U_{x^{+}_{1}}^{p-1}\Bigl(\frac{\partial U_{x^{+}_{2}}}{\partial h}+  \frac{\partial U_{x^{+}_{k}}}{\partial h} +\frac{\partial U_{x^{-}_{1}} }{\partial h}\Bigr)
				\frac{\partial U_{x^{+}_{1}} }{\partial y_1}\\
				&+O\bigg(r_k e^{-(\frac {p+1}2 -\sigma) |x^{+}_{2}-x^{+}_{1}|}+ r_k e^{-(\frac {p+1}2 -\sigma) |x^{+}_{1}-x^{-}_{1}|}\bigg)\\
				=&  \int_{\mathbb R^N} U_{x^{+}_{1}}^{p}\frac{\partial }{\partial y_1}\left(-\frac{rh}{\sqrt{1-h^2}} \Bigg(\frac{\partial U_{x^{+}_{2}} }{\partial y_1}+\frac{\partial U_{x^{+}_{k}} }{\partial y_1}+\frac{\partial U_{x^{-}_{1}}}{\partial y_1}\bigg)+\bigg(\frac{\partial U_{x^{+}_{2}}}{\partial h}+\frac{\partial U_{x^{+}_{k}}}{\partial h}+\frac{\partial U_{x^{-}_{1}}}{\partial h}\bigg)\right)\\&+\int_{\mathbb R^N} U_{x^{+}_{1}}^{p}\frac{\partial }{\partial y_3}\left(r\Bigg(\frac{\partial U_{x^{+}_{2}} }{\partial y_1}+\frac{\partial U_{x^{+}_{k}} }{\partial y_1}+\frac{\partial U_{x^{-}_{1}}}{\partial y_1}\bigg)\right)
				+O\bigg(r_k e^{-(\frac {p+1}2 -\sigma) |x^{+}_{2}-x^{+}_{1}|}+r_k e^{-(\frac {p+1}2 -\sigma) |x^{+}_{1}-x^{-}_{1}|}\bigg)\\
				= &\bigl( D_{21}+o(1)\bigr)\frac{\partial }{\partial y_1} \left(-\frac{rh}{\sqrt{1-h^2}} \Bigg(\frac{\partial U_{x^{+}_{2}} }{\partial y_1}+\frac{\partial U_{x^{+}_{k}} }{\partial y_1}+\frac{\partial U_{x^{-}_{1}}}{\partial y_1}\bigg)+\bigg(\frac{\partial U_{x^{+}_{2}}}{\partial h}+\frac{\partial U_{x^{+}_{k}}}{\partial h}+\frac{\partial U_{x^{-}_{1}}}{\partial h}\bigg)\right)\Bigg|_{ y= {x^{+}_{1}}}\\
				&+\bigl( D_{22}+o(1)\bigr) \frac{\partial }{\partial y_3}\left(r \Bigg(\frac{\partial U_{x^{+}_{2}} }{\partial y_1}+\frac{\partial U_{x^{+}_{k}} }{\partial y_1}+\frac{\partial U_{x^{-}_{1}} }{\partial y_1}\bigg)\right)\Bigg|_{ y= {x^{+}_{1}}}
				+O\bigl(r_k e^{-(\frac {p+1}2 -\sigma)|x^{+}_{2}-x^{+}_{1}|}\bigr),
			\end{split}
		\end{equation}
		where $D_{21},D_{22}$ are positive constants.
		
		For $\varsigma \in \{+,-\}$, we have
		\begin{equation}\label{3-27-5}
			\begin{split}
				&\frac{\partial U_{x^{\varsigma}_{j}}}{\partial h}= \frac{\partial U\bigl(y- ( r\sqrt{1-h^2}\cos\frac{2(j-1)\pi}k, r\sqrt{1-h^2}\sin\frac{2(j-1)\pi}k,\varsigma h, 0)\bigr)}{\partial h}\\
				=&  -U'(|y- x^{\varsigma}_{j}|) \bigg\langle \frac{y-x^{\varsigma}_{j}}{|y-x^{\varsigma}_{j}|}, \big( \frac{-rh}{\sqrt{1-h^2}}\cos\frac{2(j-1)\pi}k, \frac{-rh}{\sqrt{1-h^2}}\sin\frac{2(j-1)\pi}k,\varsigma r, 0\big)\bigg\rangle,
			\end{split}
		\end{equation}
		then
		
		\[
		\begin{split}
			&\quad-\frac{rh}{\sqrt{1-h^2}}\frac{\partial U_{x^{+}_{2}} }{\partial y_1}+\frac{\partial U_{x^{+}_{2}}}{\partial h}\\
			&=  U'(|y- {x^{+}_{2}}|) \times\bigg\langle \frac{y-{x^{+}_{2}}}{|y-{x^{+}_{2}}|},-\frac{rh}{\sqrt{1-h^2}}\big((1,0,0,0)-( \cos\frac{2\pi}k, \sin\frac{2\pi}k, -\frac{\sqrt{1-h^2}}{h},0)\big)\bigg\rangle.
		\end{split}
		\]
		Therefore,

		\[
		\begin{split}
			& \frac{\partial }{\partial y_1}\Bigl( -\frac{rh}{\sqrt{1-h^2}}\frac{\partial U_{x^{+}_{2}} }{\partial y_1}+\frac{\partial U_{x^{+}_{2}}}{\partial h}\Bigr)
			\\
			= &  U''(|y- x^{+}_{2}|)\frac{(y-x^{+}_{2})_1}{|y-x^{+}_{2}|} \bigg\langle \frac{y-x^{+}_{2}}{|y-x^{+}_{2}|}, -\frac{rh}{\sqrt{1-h^2}}\big((1,0,0,0)-( \cos\frac{2\pi}k, \sin\frac{2\pi}k, -\frac{\sqrt{1-h^2}}{h},0)\big)\bigg\rangle\\
			&+U'(|y- x^{+}_{2}|) \\&\times\bigg\langle \frac{(1,0,0,0)}{|y-x^{+}_{2}|}- (y-x^{+}_{2})\frac{(y-x^{+}_{2})_1}{|y-x^{+}_{2}|^3}, -\frac{rh}{\sqrt{1-h^2}}\big((1,0,0,0)-( \cos\frac{2\pi}k, \sin\frac{2\pi}k, -\frac{\sqrt{1-h^2}}{h},0)\big)\bigg\rangle.
		\end{split}
		\]
		\iffalse
		We have
		
		\[
		(1, 0,0,0)-( \cos\frac{2\pi}k, \sin\frac{2\pi}k,-\frac{\sqrt{1-h^2}}{h},0)= \bigl( \frac{2\pi^2}{k^2}+O\bigl(\frac1{k^4}\bigr), -\frac{2\pi}k+O\bigl(\frac1{k^2}\bigr),\frac{\sqrt{1-h^2}}{h}, 0\bigr).
		\]

		Noting that
		
		\[
		|x^{+}_{2}-x^{+}_{1}|=2r_k\sqrt{1-h^2}\sin\frac{\pi}k= \frac {2\pi \sqrt{1-h^2}r_k}k+O\bigl( \frac{ r_k}{k^3}\bigr),
		\]
		
		\[
		(x^{+}_{2}-x^{+}_{1})_1= r_k\sqrt{1-h^2} \bigl(1- \cos\frac{2\pi}k\bigr)= \sqrt{1-h^2}\frac{2\pi^2 r_k}{k^2}+O\bigl( \frac{ r_k}{k^4}\bigr),
		\]
		\[
		(x^{+}_{2}-x^{+}_{1})_2= -r_k\sqrt{1-h^2} \sin \frac{2\pi}k =- \frac {2\pi \sqrt{1-h^2}r_k}{k}+O\bigl( \frac{r_k}{k^3}\bigr),
		\]
		and
		\[
		(x^{+}_{2}-x^{+}_{1})_3= 0.
		\]
		
		\fi
		So we get that
		
		\[
		\begin{split}
			& \frac{\partial }{\partial y_1}\Bigl(-\frac{rh}{\sqrt{1-h^2}}\frac{\partial U_{x^{+}_{2}} }{\partial y_1}+\frac{\partial U_{x^{+}_{2}}}{\partial h}\Bigr)\Bigr|_{y=x^{+}_{1}}
			\\
			= &  U''(|x^{+}_{2}-x^{+}_{1}|)\frac{(x^{+}_{2}-x^{+}_{1})_1}{|x^{+}_{2}-x^{+}_{1}|}  \frac{(x^{+}_{2}-x^{+}_{1})_2}{|x^{+}_{2}-x^{+}_{1}|}\bigl(\frac{rh}{\sqrt{1-h^2}} \sin\frac{2\pi}k\bigr)\\
			&  -\frac{rh}{\sqrt{1-h^2}}U'(|x^{+}_{2}-x^{+}_{1}|) \Bigl( \frac{1}{|x^{+}_{2}-x^{+}_{1}|}  \bigl( 1-\cos\frac{2\pi}k\bigr)  + \frac{(x^{+}_{2}-x^{+}_{1})_2(x^{+}_{2}-x^{+}_{1})_1}{|x^{+}_{2}-x^{+}_{1}|^3} \sin\frac{2\pi}k\Bigr)+O\bigl(\frac1{k^3}\bigr)\\
			=& -\frac{ 2\pi^2 rh}{k^2 \sqrt{1-h^2}}U(|x^{+}_{2}-x^{+}_{1}|)\Bigl( 1+o(1)\Bigr).
		\end{split}
		\]
		Similarly,
		\[
		\begin{split}
			& \frac{\partial }{\partial y_1}\Bigl(-\frac{rh}{\sqrt{1-h^2}}\frac{\partial U_{x^{+}_{k}} }{\partial y_1}+\frac{\partial U_{x^{+}_{k}}}{\partial h}\Bigr)\Bigr|_{y=x^{+}_{1}}= -\frac{ 2\pi^2 rh}{k^2 \sqrt{1-h^2}}U(|x^{+}_{k}-x^{+}_{1}|)\Bigl( 1+o(1)\Bigr),
		\end{split}
		\]
		and
		\[
		\begin{split}
			\frac{\partial }{\partial y_1}\Bigl(-\frac{rh}{\sqrt{1-h^2}}\frac{\partial U_{x^{-}_{1}} }{\partial y_1}+\frac{\partial U_{x^{-}_{1}}}{\partial h}\Bigr)\Bigr|_{y=x^{+}_{1}}= 0.
		\end{split}
		\]

		On the other hand,
		\begin{equation*}
			\frac{\partial }{\partial y_3}\left(r\Bigg(\frac{\partial U_{x^{+}_{2}} }{\partial y_1}+\frac{\partial U_{x^{+}_{k}} }{\partial y_1}+\frac{\partial U_{x^{-}_{1}} }{\partial y_1}\bigg)\right)\Bigg|_{y=x^{+}_{1}} =0.
		\end{equation*}
		So we have proved
		\begin{equation}\label{60-27-67}
			\begin{split}
				I_1&\left(\sum_{j=1}^k \bigg(U_{x^{+}_{j}}(y)+  U_{x^{-}_{j}}(y)   \bigg) ,  \sum_{i=1}^k \bigg(\frac{\partial U_{x^{+}_{i}}(y)}{\partial h}+   \frac{\partial U_{x^{-}_{i}}(y)}{\partial h} \bigg),  \Omega\right)
				\\
				& = -\bigl( D_2+o(1)\bigr)  \frac{rh}{k^2\sqrt{1-h^2}}U(|x^{+}_{2}-x^{+}_{1}|)
				+O\bigl(r_k e^{-(\frac {p+1}2 -\sigma)|x^{+}_{2}-x^{+}_{1}|}\bigr).
			\end{split}
		\end{equation}
		where $D_2=2\pi^2D_{21}$.
		
		Using \eqref{1-28-2}  and Lemma~\ref{lem3.2}, together with $L^p$-estimates for the elliptic equations,
		we can prove

		\[
		|\nabla \omega_{k_n}(y)|,\;\; |\nabla \xi_n^*(y)|\le \frac{C }{k^{\min \{ \frac p2-\tau, 1 \}m}},\quad \forall y\in\partial\Omega,
		\]
		from which, we see
		
		\begin{equation}\label{70-27-4}
			I_1\left(\omega_{k_n},  \sum_{i=1}^k \bigg(\frac{\partial U_{x^{+}_{j}}(y)}{\partial r}+   \frac{\partial U_{x^{-}_{j}}(y)}{\partial r} \bigg),  \Omega\right)= O\bigg( \frac{1 }{k^{\min \{ \frac p2-\tau, 1 \}m}} \bigg)e^{-\frac{1-\sigma}2
				|x^{+}_{2}-x^{+}_{1}|},
		\end{equation}
		
		\begin{equation}\label{70-27-41}
			I_1\left(\omega_{k_n},  \sum_{i=1}^k \bigg(\frac{\partial U_{x^{+}_{j}}(y)}{\partial h}+   \frac{\partial U_{x^{-}_{j}}(y)}{\partial h} \bigg),  \Omega\right)= O\bigg( \frac{1 }{k^{\min \{ \frac p2-\tau, 1 \}m}} \bigg)r_ke^{-\frac{1-\sigma}2
				|x^{+}_{2}-x^{+}_{1}|},
		\end{equation}
		and

		\begin{equation}\label{71-27-4}
			I_1\left(\sum_{j=1}^k \bigg(U_{x^{+}_{j}}(y)+  U_{x^{-}_{j}}(y)\bigg)  ,  \xi^*_n,  \Omega\right)= O\bigg( \frac{1 }{k^{\min \{ \frac p2-\tau, 1 \}m}} \bigg)e^{-\frac{1-\sigma}2
				|x^{+}_{2}-x^{+}_{1}|}.
		\end{equation}
		Thus,
		\begin{equation}\label{I1}
			\begin{split}
				I_1( u_{k_n} ,  \xi_n,  \Omega)
				=&b_{1n}\bigl( D_1+o(1)\bigr)  \frac{ \sqrt{1-h^2}}{k^2}U(|x^{+}_{2}-x^{+}_{1}|)-b_{3n}\bigl( D_2+o(1)\bigr)  \frac{h}{k^2\sqrt{1-h^2}}U(|x^{+}_{2}-x^{+}_{1}|)\\
				&+O\bigl( e^{-(\frac {p+1}2 -\sigma)|x^{+}_{2}-x^{+}_{1}|}\bigr)+ O\bigg( \frac{1 }{k^{\min \{ \frac p2-\tau, 1 \}m}} \bigg)e^{-\frac{1-\sigma}2|x^{+}_{2}-x^{+}_{1}|}.
			\end{split}
		\end{equation}

		It is easy to check that

		\begin{equation}\label{1-28-4}
			\int_{\partial \Omega} \bigl( V(|y|)-1\bigr)u \xi \nu_1=O\bigg(\frac1{r_k^m} e^{-(1-\sigma)|x^{+}_{2}-x^{+}_{1}|}\bigg),
		\end{equation}
		and
		
		\begin{equation}\label{2-28-4}
			\int_{\partial \Omega} u^p \xi \nu_1=O\bigl(e^{-(\frac{p+1}2-\sigma)|x^{+}_{2}-x^{+}_{1}|}\bigr).
		\end{equation}
		Combining \eqref{2-20-3}, and \eqref{I1}--\eqref{2-28-4}, we obtain
		
		\begin{equation}\label{3-28-4}
			\begin{split}
				&b_{1n}\bigl( D_1+o(1)\bigr)  \frac{ \sqrt{1-h^2}}{k^2}U(|x^{+}_{2}-x^{+}_{1}|)-b_{3n}\bigl( D_2+o(1)\bigr)  \frac{h}{k^2\sqrt{1-h^2}}U(|x^{+}_{2}-x^{+}_{1}|)\\
				&=\int_\Omega u\xi \frac{\partial V }{\partial y_1}+O\bigg( \frac{1 }{k^{\min \{ \frac p2-\tau, 1 \}m}} e^{-\frac{1-\sigma}2|x^{+}_{2}-x^{+}_{1}|}+e^{-(\frac {p+1}2 -\sigma)|x^{+}_{2}-x^{+}_{1}|}\bigg)\\
				&=\int_\Omega u\xi \frac{\partial V }{\partial y_1}+O\bigg( \frac{1 }{k^{({\frac{1-\sigma}2+\min \{ \frac p2-\tau, 1 \})m}}} \bigg).
			\end{split}
		\end{equation}

		\medskip
		
		For $I_3( u_{k_n} ,  \xi_n,  \Omega)$, we can similarly calculate that
		\begin{equation}\label{80-27-4}
			\begin{split}
				I_3&\left(\sum_{j=1}^k \bigg(U_{x^{+}_{j}}(y)+  U_{x^{-}_{j}}(y)   \bigg) ,  \sum_{i=1}^k \bigg(\frac{\partial U_{x^{+}_{i}}(y)}{\partial r}+   \frac{\partial U_{x^{-}_{i}}(y)}{\partial r} \bigg),  \Omega\right)
				\\
				& = \bigl( D_3+o(1)\bigr)h   U(|x^{+}_{1}-x^{-}_{1}|)
				+O\bigl( e^{-(\frac {p+1}2 -\sigma)|x^{+}_{1}-x^{-}_{1}|}\bigr),
			\end{split}
		\end{equation}
		
		and
		\begin{equation}\label{90-27-67}
			\begin{split}
				I_3&\left(\sum_{j=1}^k \bigg(U_{x^{+}_{j}}(y)+  U_{x^{-}_{j}}(y)   \bigg) ,  \sum_{i=1}^k \bigg(\frac{\partial U_{x^{+}_{i}}(y)}{\partial h}+   \frac{\partial U_{x^{-}_{i}}(y)}{\partial h} \bigg),  \Omega\right)
				\\
				& = \bigl( D_4+o(1)\bigr)  r U(|x^{+}_{1}-x^{-}_{1}|)
				+O\bigl( e^{-(\frac {p+1}2 -\sigma)|x^{+}_{1}-x^{-}_{1}|}\bigr).
			\end{split}
		\end{equation}
		where $D_3,D_4$ are positive constants.
		Then
		\begin{equation}\label{I3}
			\begin{split}
				I_3(u_{k_n} ,  \xi_n,  \Omega)
				=&b_{1n}\bigl( D_3+o(1)\bigr)  hU(|x^{+}_{1}-x^{-}_{1}|)+b_{3n}\bigl( D_4+o(1)\bigr)   U(|x^{+}_{1}-x^{-}_{1}|)\\
				&+O\bigl( e^{-(\frac {p+1}2 -\sigma)|x^{+}_{1}-x^{-}_{1}|}\bigr)+ O\bigg( \frac{1 }{k^{\min \{ \frac p2-\tau, 1 \}m}} \bigg)e^{-\frac{1-\sigma}2|x^{+}_{1}-x^{-}_{1}|}.
			\end{split}
		\end{equation}
		Thus,
		\begin{equation}\label{3-28-5}
			\begin{split}
				&b_{1n}\bigl( D_3+o(1)\bigr)  hU(|x^{+}_{1}-x^{-}_{1}|)+b_{3n}\bigl( D_4+o(1)\bigr)  U(|x^{+}_{1}-x^{-}_{1}|)\\
				&=\int_\Omega u\xi \frac{\partial V }{\partial y_3}+O\bigg( \frac{1 }{k^{\min \{ \frac p2-\tau, 1 \}m}} e^{-\frac{1-\sigma}2|x^{+}_{1}-x^{-}_{1}|}+e^{-(\frac {p+1}2 -\sigma)|x^{+}_{1}-x^{-}_{1}|}\bigg)\\
				&=\int_\Omega u\xi \frac{\partial V }{\partial y_3}+O\bigg( \frac{1 }{k^{({\frac{1-\sigma}2+\min \{ \frac p2-\tau, 1 \})m}}} \bigg).
			\end{split}
		\end{equation}

		Using \eqref{1-28-2}, \eqref{xi*}  and  Lemma~\ref{lem3.2}, we find

		\begin{equation}\label{1000-21-12}
			\begin{split}
				&\int_{ \Omega} u\xi V'(|y|) |y|^{-1} y_1
				\\
				=&\bigg(-\sqrt{1-h^2}b_{1n}+\frac{h}{\sqrt{1-h^2}}b_{3n}\bigg)\int_{ \mathbb R^N} U_{x^{+}_{1}}   \frac{\partial U_{x^{+}_{1}}}{\partial y_1} V'(|y|) |y|^{-1} y_1 \\ &-\bigg(h b_{1n}+b_{3n}\bigg)\int_{ \mathbb R^N} U_{x^{+}_{1}}   \frac{\partial U_{x^{+}_{1}}}{\partial y_3} V'(|y|) |y|^{-1} y_1+\frac1{r_k^{m+1}} O\Bigl(\frac{1}{ r_k^m}+   e^{- \min\{\frac p2-\sigma, 1\}|x^{+}_{2}-x^{+}_{1}|}\Bigr)\\
				=&\bigg(-\sqrt{1-h^2}b_{1n}+\frac{h}{\sqrt{1-h^2}}b_{3n}\bigg)\int_{ \mathbb R^N} U_{x^{+}_{1}}   \frac{\partial U_{x^{+}_{1}}}{\partial y_1} V'(|y|) |y|^{-1} y_1\\
 &+\frac1{r_k^{m+1}} O\Bigl(\frac{1}{ r_k^m}+   e^{- \min\{\frac p2-\sigma, 1\}|x^{+}_{2}-x^{+}_{1}|}\Bigr).
			\end{split}
		\end{equation}

		For any $m>1$, it holds
		
		\[
		\frac{ y_1+ x_{1, 1}^{+}} {|y+x^{+}_{1}|^\alpha} = \frac{  x_{1, 1}^{+}} {|x^{+}_{1}|^\alpha}- (\alpha(1-h^2)-1) \frac{ y_1} {|x^{+}_{1}|^\alpha}+O\bigl(\frac1{|x^{+}_{1}|^{\alpha+1}}\bigr).
		\]
		Thus,
		
		\[
		\int_{\Omega}U  \frac{\partial U}{\partial y_1}  \frac{ y_1+ x_{1, 1}^{+}} {|y+x^{+}_{1}|^\alpha}=- (\alpha(1-h^2)-1)\int_{\Omega}U  \frac{\partial U}{\partial y_1} \frac{ y_1} {|x^{+}_{1}|^\alpha}
		+O\bigl(\frac1{|x^{+}_{1}|^{\alpha+1}}\bigr).
		\]
		Therefore, 
		
		\begin{equation}\label{11-21-12}
			\begin{split}
				&\int_{ \mathbb R^N} U_{x^{+}_{1}}   \frac{\partial U_{x^{+}_{1}}}{\partial y_1} V'(|y|) |y|^{-1} y_1\\
				= & \int_{ \mathbb R^N+ x^{+}_{1} } U  \frac{\partial U}{\partial y_1}V'(|y+ x^{+}_{1}|) |y+ x^{+}_{1}|^{-1} (y_1+  x_{1,1})\\
				= & \int_{ \mathbb R^N } U  \frac{\partial U}{\partial y_1} \Bigl( -\frac{ m a_1 }{ |y+ x^{+}_{1}|^{m+1}} -\frac{ (m+1) a_2 } {|y+ x^{+}_{1}|^{m+2}} +
				O\bigl( \frac{ 1 }{ |y+ x^{+}_{1}|^{m+3}}\bigr)\Bigr)\frac{ y_1+  x_{1,1}}{|y+ x^{+}_{1}| }
				\\
				=&\frac {m\big((m+2)(1-h^2)-1\big)a_1}{r_{k}^{m+2}} \Bigl( \int_{\mathbb R^N}  U U'(|y|) |y|^{-1} y_1^2  +o(1)\Bigr).
			\end{split}
		\end{equation}

		Combining \eqref{3-28-4} and \eqref{11-21-12}, we obtain

		\begin{equation}\label{36}
			\begin{split}
				& \frac {m\big((m+2)(1-h^2)-1\big)a_1}{r_k^{m+2}}\bigg(-\sqrt{1-h^2}b_{1n}+\frac{h}{\sqrt{1-h^2}}b_{3n}\bigg) \Bigl( \int_{\mathbb R^N}  U U'(|y|) |y|^{-1} y_1^2   +o(1)\Bigr)
				\\
				=& b_{1n}\bigl( D_1+o(1)\bigr)  \frac{ \sqrt{1-h^2}}{k^2}U(|x^{+}_{2}-x^{+}_{1}|)-b_{3n}\bigl( D_2+o(1)\bigr)  \frac{h}{k^2\sqrt{1-h^2}}U(|x^{+}_{2}-x^{+}_{1}|)\\
				&+\frac1{r_k^{m+1}} O\Bigl(\frac{1}{ r_k^m}+   e^{- \min\{\frac p2-\sigma, 1\}|x^{+}_{2}-x^{+}_{1}|}\Bigr)
				+O\bigg( \frac{1 }{k^{({\frac{1-\sigma}2+\min \{ \frac p2-\tau, 1 \})m}}} \bigg).
			\end{split}
		\end{equation}
		On the other hand, from \eqref{3-28-5} we have

		\begin{equation}\label{37}
			\begin{split}
				& \frac {m\big(1-(m+2)h^{2}\big)a_1}{r_k^{m+2}}\bigg(h b_{1n}+b_{3n}\bigg) \Bigl( \int_{\mathbb R^N}  U U'(|y|) |y|^{-1} y_3^2   +o(1)\Bigr)
				\\
				=& b_{1n}\bigl( D_3+o(1)\bigr)  hU(|x^{+}_{1}-x^{-}_{1}|)+b_{3n}\bigl( D_4+o(1)\bigr)  U(|x^{+}_{1}-x^{-}_{1}|)\\
				&+\frac1{r_k^{m+1}} O\Bigl(\frac{1}{ r_k^m}+   e^{- \min\{\frac p2-\sigma, 1\}|x^{+}_{1}-x^{-}_{1}|}\Bigr)
				+O\bigg( \frac{1 }{k^{({\frac{1-\sigma}2+\min \{ \frac p2-\tau, 1 \})m}}} \bigg).
			\end{split}
		\end{equation}

		Since $m>\max\bigg\{\displaystyle\frac 4{p-1}, 4\bigg\}$, then
		
		\[
		\frac1{r_k^{m+1}} O\Bigl(\frac{1}{ r_k^m}+   e^{- \min\{\frac p2-\sigma, 1\}|x^{+}_{2}-x^{+}_{1}|}\Bigr)
		+O\bigg( \frac{1 }{k^{({\frac{1-\sigma}2+\min \{ \frac p2-\tau, 1 \})m}}} \bigg)=o\Bigl(  \frac{ 1}{r_k^{m+2}}  \Bigr).
		\]
		By \eqref{B1}--\eqref{B13},
		
		\begin{equation}\label{10-28-4}
			\frac{ k}{r_k^{m+1}}
			\sim U(|x^{+}_{2}-x^{+}_{1}|),\qquad\frac{ 1}{k r_k^{m+1}}
			\sim U(|x^{+}_{1}-x^{-}_{1}|).
		\end{equation}
		Hence,
		\begin{equation}\label{40}
			\frac{  U(|x^{+}_{2}-x^{+}_{1}|)}{k^2}\sim U(|x^{+}_{1}-x^{-}_{1}|)\sim \frac1k \frac{1 }{r_k^{m+1}}=\frac{r_k}k \frac{1 }{r_k^{m+2}}>>\frac{1 }{r_k^{m+2}}.
		\end{equation}
		Thus, from \eqref{36}--\eqref{40}, we get
		\begin{equation*}
			b_{1n}=b_{3n}= o(1).
		\end{equation*}
	\end{proof}
	
	\medskip
	
	\begin{proof} [{\bf Proof of Theorem~\ref{th11}}]
		
		Let  $G(y, x)$ be the Green's function of $-\Delta u + V(|y|) u$  in $\mathbb R^N$. We have

		\begin{equation}\label{2-28-11}
			\xi_k (y)= p \int_{\mathbb R^N}  G(z, y) u_k^{p-1}(z) \xi_k(z)\,dz.
		\end{equation}
		Then
		\[
		\begin{split}
			&\bigg|\int_{\mathbb R^N}  G(z, y) u_k^{p-1}(z) \xi_k(z)\,dz\bigg|\le  C\|\xi _k\|_{*} \int_{\mathbb R^N} G(z, y) u_k^{p-1}(z)\sum_{j=1}^k (e^{-\tau|z-x_{j}^{+}|}+e^{-\tau|z-x_{j}^{-}|})\,dz
			\\
			\le & C\|\xi _k\|_{*} \int_{\mathbb R^N} G(z, y)\bigl( W_{r,h}^{p-1}(z)+ |\omega|^{p-1}\bigr)\sum_{j=1}^k (e^{-\tau|z-x_{j}^{+}|}+e^{-\tau|z-x_{j}^{-}|})\,dz\\
			\le & C\|\xi _k\|_{*} \int_{\mathbb R^N} G(z, y) \Bigl(\sum_{j=1}^k (e^{- (p-1-\sigma+\tau)|z-x_{j}^{+}|}+e^{- (p-1-\sigma+\tau)|z-x_{j}^{-}|})\Bigr)\\&+C\|\xi _k\|_{*}\int_{\mathbb R^N} G(z, y)\|\omega\|_*^{p-1} \Bigl(\sum_{j=1}^k (e^{-(p\tau-\sigma) |z-x_{j}^{+}|}+e^{-(p\tau-\sigma) |z-x_{j}^{-}|})\Bigr)
			\\
			\le & C\|\xi_k\|_{*}\Bigl(\sum_{j=1}^k (e^{- (p-1-\sigma+\tau)|z-x_{j}^{+}|}+e^{- (p-1-\sigma+\tau)|z-x_{j}^{-}|})\Bigr)+C\|\xi_k\|_{*} \sum_{j=1}^k (e^{-(p\tau-\sigma)|y-x_{j}^{+}|}+e^{-(p\tau-\sigma)|y-x_{j}^{-}|}),
		\end{split}
		\]
		where $\sigma\in (0, \tau)$ is any fixed small constant. So we obtain

		\[
		\begin{split}
			&\frac{ |\xi_k(y)|}{ \sum_{j=1}^k (e^{-\tau|y-x_{j}^{+}|}+e^{-\tau|y-x_{j}^{-}|})}\\\le &C\|\xi_k\|_{*}\frac{\sum_{j=1}^k (e^{- (p-1-\sigma+\tau)|z-x_{j}^{+}|}+e^{- (p-1-\sigma+\tau)|z-x_{j}^{-}|})+ \sum_{j=1}^k (e^{-(p\tau-\sigma)|y-x_{j}^{+}|}+e^{-(p\tau-\sigma)|y-x_{j}^{-}|})}{ \sum_{j=1}^k (e^{-\tau|y-x_{j}^{+}|}+e^{-\tau|y-x_{j}^{-}|})}.
		\end{split}
		\]
		
		Since $\xi_k\to 0$ in $B_R(x_{j}^{+})$  and $\|\xi_k\|_*=1$, we know that $\frac{ |\xi_k(y)|}{ \sum_{j=1}^k (e^{-\tau|y-x_{j}^{+}|}+e^{-\tau|y-x_{j}^{-}|})}$
		attains its maximum in $\mathbb R^N \setminus \cup_{j=1}^k B_R(x_{j}^{+})$. Thus,
		
		\[
		\|\xi_k\|_* \le o(1)\|\xi_k\|_* .
		\]
		So $\|\xi_k\|_*\to 0$ as $k\to +\infty$. This is a contradiction to $\|\xi_k\|_*=1$.

	\end{proof}

	\medskip
	
	\section{Existence of new double-tower solutions}
	Let  $u_k$ be the  solutions constructed in Section \ref{sec2}, where $k>0$ is a large even integer. Since $k$ is even, $u_k$ is even in each
	$y_j$, $j=1,\cdots, N$.   Moreover,  $u_k$  is radial in $y''=(y_4, \cdots, y_N)$. Recall that
	\[
	p_{j}^{+}=t\Bigl(0, 0, 0,\sqrt{1-l^2}\cos\frac{2(j-1)\pi}n, \sqrt{1-l^2}\sin\frac{2(j-1)\pi}n,l,0\Bigr),\quad j=1,\cdots,n,
	\]
	\[
	p_{j}^{-}=t\Bigl(0, 0, 0,\sqrt{1-l^2} \cos\frac{2(j-1)\pi}n, \sqrt{1-l^2}\sin\frac{2(j-1)\pi}n,-l,0\Bigr),\quad j=1,\cdots,n,
	\]
	where   $(t,l) \in [t_0n\ln n, t_1n\ln n]\times[\displaystyle\frac{l_0}{n},\displaystyle\frac{l_1}{n}]$.
	And,
	\[
	\begin{split}
		X_s=\Bigg\{ u:  u\in H_s, u\;& \text{is even in} \;y_h, h=1,\cdots,N, \  \ \\
&u\left(y_1, y_2,y_3, t\sqrt{1-l^2}\cos\theta , t\sqrt{1-l^2}\sin\theta,y_6, y^*\right)\\
		& =
		u\left(y_1,y_2,y_3, t\sqrt{1-l^2}\cos\left(\theta+\frac{2\pi j}n\right) , t\sqrt{1-l^2}\sin\left(\theta+\frac{2\pi j}n\right),y_6, y^*\right)
		\Bigg\}.
	\end{split}
	\]
	Here $y^*=(y_7,\cdots,y_N) \in \R^{N-6}$.
	For simplicity, we denote $Y_{t,l}(y)=\sum_{j=1}^n (U_{p_{j}^{+}}+U_{p_{j}^{-}})(y).$
	
	Noting that both $u_k $  and $\sum_{j=1}^n (U_{p_{j}^{+}}+U_{p_{j}^{-}})$ belong to $ X_s$, while  $u_k$  and $\sum_{j=1}^n (U_{p_{j}^{+}}+U_{p_{j}^{-}})$ are separated  from each other. We aim to construct a solution for \eqref{equation} of the form
	\[
	u= u_k +Y_{t,l}(y) +\xi,
	\]
	where $\xi\in X_s$ is a small perturbed term.  Since the procedure to construct such solutions are similar to that in Section \ref{sec2}, we just sketch it.
	
	We consider the linearized problem:
	\begin{align}\label{linnn}
		\begin{cases}
			-\Delta {\xi}+V(|y|)\xi-p{(u_k+Y_{t,l}(y))^{p-1}}\xi =g+\sum\limits_{j=1}^n\sum\limits_{\ell=3}^4 c_{n\ell} \Big( U_{p_{j}^{+}}^{p-1}\overline{\mathbb{Z}}_{\ell j}+ U_{p_{j}^{-}}^{p-1}\underline{\mathbb{Z}}_{\ell j}\Big)
			\;\;
			\text{in}\;
			\R^N,
			\\[2mm]
			\xi\in \mathbb{F}_n,
		\end{cases}
	\end{align}
	for some constants $c_{n\ell} $, where the functions $\overline{\mathbb{Z}}_{\ell j}$ and $\underline{\mathbb{Z}}_{\ell j},\ell=3,4$ are given by
	\begin{align*}
		\overline{\mathbb{Z}}_{3j}=\frac{\partial U_{p_{j}^{+}}}{\partial t},
		\qquad
		\underline{\mathbb{Z}}_{3j}=\frac{\partial U_{p_{j}^{-}}}{\partial t},
		\qquad
		\overline{\mathbb{Z}}_{4j}=\frac{\partial U_{p_{j}^{+}}}{\partial l},
		\qquad
		\underline{\mathbb{Z}}_{4j}=\frac{\partial U_{p_{j}^{-}}}{\partial l}.
	\end{align*}
	for $ j= 1, \cdots, k $. Moreover the function $\xi$ belongs to the set $\mathbb{F}_k$ given by
	\begin{align}\label{SpaceF}
		\mathbb{F}_n= \Big\{\xi:  \xi\in X_s \cap {C(\R^N)}, \quad \big<U_{p_{j}^{+}}^{p-1}  \overline{\mathbb{Z}}_{\ell j} ,\xi \big> = \big<{U_{p_{j}^{-}}^{p-1}}  \underline{\mathbb{Z}}_{\ell j} ,\xi \big> = 0,  \ j= 1, \cdots, k, \ \ell= 3,4\Big\}.
	\end{align}

	\begin{lemma}\label{lem5.1}
		Suppose that $\xi_{n}$ solves \eqref{linnn} for $g=g_{n}$. If $\| g_n\|_{H^1(\R^N)} \to 0$  as $n\to +\infty$, so does  $\|\xi_{n}\|_{{H^1(\R^N)}}$.
	\end{lemma}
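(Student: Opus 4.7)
The plan is to argue by contradiction, paralleling the strategy of Lemma~\ref{lem2.1} and Lemma~\ref{lem3.3} but working in the $H^1(\R^N)$-norm and with the composite background $u_k + Y_{t,l}$. Assume there exist $(t_n, l_n) \in \mathscr{T}_n$ and $(\xi_n, c_{n\ell})$ solving \eqref{linnn} with $\|g_n\|_{H^1(\R^N)} \to 0$ while $\|\xi_n\|_{H^1(\R^N)} = 1$.

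\emph{Step 1 (Lagrange multipliers).} Testing \eqref{linnn} against $\overline{\mathbb{Z}}_{q j}$ and $\underline{\mathbb{Z}}_{q j}$ and using the near-diagonal structure of the Gram matrix (diagonal entries of order $1$ for $\ell = q = 3$ and of order $t^2$ for $\ell = q = 4$, with off-diagonal and cross-bubble entries exponentially small), combined with the decay of $V(|y|)-1$ at infinity, yields $|c_{n\ell}| = o(1)/(1 + \delta_{\ell 4} t)$ as $n \to \infty$, in direct analogy with the estimate \eqref{estimate cl} from the proof of Lemma~\ref{lem2.1}.

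\emph{Step 2 (Blow-up at $p_j^\pm$).} Put $\eta_n(y) := \xi_n(y + p_1^+)$; since $\|\eta_n\|_{H^1(\R^N)} = 1$, along a subsequence $\eta_n \rightharpoonup \eta^*$ weakly in $H^1_{\mathrm{loc}}$ and strongly in $L^2_{\mathrm{loc}}$. Because $|p_1^+| \to \infty$ and the distances from $p_1^+$ to the other bubbles of $Y_{t,l}$ and to all $x_j^\pm$ tend to infinity, on any fixed compact set $V(\cdot + p_1^+) \to 1$ and $(u_k + Y_{t,l})^{p-1}(\cdot + p_1^+) \to U^{p-1}$ uniformly; the bound on $c_{n\ell}$ from Step~1 makes the multiplier contribution vanish in the limit. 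Hence $\eta^*$ solves $-\Delta \eta^* + \eta^* = pU^{p-1}\eta^*$, so $\eta^* = \sum_{i=1}^N b_i \partial_{y_i} U$. The $X_s$-evenness in each $y_h$ with $(p_1^+)_h = 0$ (namely for $h \in \{1,2,3,5,7,\ldots,N\}$) forces $b_i = 0$ for $i \notin \{4,6\}$. The two orthogonality conditions $\langle U_{p_1^+}^{p-1} \overline{\mathbb{Z}}_{\ell 1}, \xi_n \rangle = 0$, $\ell = 3, 4$, pass to the limit to give a $2\times 2$ linear system in $(b_4, b_6)$ whose determinant is proportional to $-t/\sqrt{1-l^2} \neq 0$, so $b_4 = b_6 = 0$ and $\eta^* \equiv 0$. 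The same conclusion holds around each $p_j^\pm$, and around every $x_j^\pm$ the situation is simpler because $k$ is fixed: the global weak limit $\xi_\infty$ of $\xi_n$ lies in $H_s \cap H^1(\R^N)$ and satisfies $L_k \xi_\infty = 0$, hence $\xi_\infty = 0$ by Theorem~\ref{th11}.

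\emph{Step 3 (Energy identity and contradiction).} Testing \eqref{linnn} against $\xi_n$, the Lagrange-multiplier sum vanishes by $\xi_n \in \mathbb{F}_n$ and $|\int g_n \xi_n| \leq \|g_n\|_{H^1} \to 0$. One is left with
\[
1 = \|\xi_n\|_{H^1}^2 = \int_{\R^N} (1-V)\xi_n^2 + \int_{\R^N} p(u_k + Y_{t,l})^{p-1}\xi_n^2 + o(1).
\]
The first term is $o(1)$ because $V-1$ decays at infinity and the weak limit of $\xi_n$ vanishes. The second term splits into a bulk part over the complement of small balls around the $2(n+k)$ bubbles, which is $o(1)$ by the smallness of $(u_k+Y_{t,l})^{p-1}$ there, plus localized bubble parts. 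The pieces near $x_j^\pm$ vanish because $k$ is fixed and $\xi_n \to 0$ in $L^2_{\mathrm{loc}}$. The chief obstacle is the sum of the $2n$ contributions near $p_j^\pm$: the rotational symmetry of $X_s$ only gives $\|\xi_n\|_{L^2(B_R(p_j^+))}^2 \leq 1/n$, so a naive sum over $2n$ balls recovers merely an $O(1)$ bound. To overcome this one must quantify the blow-up vanishing from Step~2: the orthogonality of $\xi_n$ to $\overline{\mathbb{Z}}_{\ell j}, \underline{\mathbb{Z}}_{\ell j}$ together with the coercivity of $-\Delta + \mathbb{I} - pU^{p-1}$ on the orthogonal complement of $\mathrm{span}\{\partial_{y_i} U\}$ upgrades $\eta_n \to 0$ in $L^2_{\mathrm{loc}}$ to $\int_{B_R(0)} U^{p-1}\eta_n^2 = o(1)\,\|\eta_n\|_{H^1(B_{2R}(0))}^2$ uniformly in $n$, after which summing over disjoint $B_{2R}(p_j^\pm)$ yields $\sum_j \int_{B_R(p_j^\pm)} U(\cdot-p_j^\pm)^{p-1}\xi_n^2 \leq o(1)\,\|\xi_n\|_{H^1}^2 = o(1)$. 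The resulting identity $1 = o(1)$ is the desired contradiction, completing the proof.
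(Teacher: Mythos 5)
Your overall strategy (contradiction, blow-up around the bubbles, energy identity) is the right one and parallels the paper's proof, but the crucial quantitative step in Step 3 has a genuine gap, and the gap stems from the normalization you chose.

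The paper normalizes $\|\xi_n\|_{H^1}^2 = n$ (not $\|\xi_n\|_{H^1} = 1$). Under that normalization, by the $X_s$-symmetry the translates $\eta_n := \xi_n(\cdot + p_1^+)$ are \emph{bounded but not vanishing} in $H^1(B_R(0))$, so a weak limit $\eta$ exists and the problem reduces to showing $\eta = 0$, which follows from the orthogonality constraints in $\mathbb{F}_n$ together with the $X_s$-evenness once one passes to the limit in the translated equation. Strong $L^2_{\mathrm{loc}}$ convergence then gives $\int_{B_R(p_1^+)} U^{p-1}_{p_1^+}\xi_n^2 = o(1)$ per ball, hence $\int Y_{t,l}^{p-1}\xi_n^2 = o(n)$, and combined with the global weak limit $\frac{1}{\sqrt{n}}\xi_n \rightharpoonup 0$ (via Theorem~\ref{th11}) one obtains $\int (u_k+Y_{t,l})^{p-1}\xi_n^2 = o(n) = o(\|\xi_n\|^2)$, which contradicts the testing identity $\int |\nabla\xi_n|^2 + V\xi_n^2 = p\int (u_k+Y_{t,l})^{p-1}\xi_n^2 + o(n)$.

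With your unit normalization each translate has $\|\eta_n\|_{H^1(B_R)}^2 \lesssim 1/n$, so $\eta_n \to 0$ \emph{strongly} and the local weak limit is vacuously zero; the whole content you need is that the ratio $\int_{B_R}U^{p-1}\eta_n^2 / \|\eta_n\|^2_{H^1(B_{2R})}$ tends to zero. Your invocation of coercivity of $-\Delta + 1 - pU^{p-1}$ on $\operatorname{span}\{\partial_{y_i}U\}^{\perp}$ does \emph{not} deliver this: coercivity gives only the fixed-constant bound $\int U^{p-1}\phi^2 \le \tfrac{1-c}{p}\|\phi\|_{H^1}^2$, not an $o(1)$ factor. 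An $o(1)$ factor would require a genuine compactness argument for the renormalized sequence $\eta_n/\|\eta_n\|_{H^1(B_{2R})}$ — showing it converges weakly to zero by passing to the limit in the correspondingly rescaled equation and controlling the rescaled $g_n$ and multiplier terms — and as written your proof neither states nor carries out this step. Effectively one is forced back to the $\sqrt{n}$-rescaling, i.e.\ the paper's normalization $\|\xi_n\|^2 = n$, which is exactly what makes the passage to the limit clean. I would recommend adopting that normalization from the outset; as it stands, Step~3 does not close.
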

	\begin{proof}
		For simplicity, we will use  $\| u\|$ to denote $\|u \|_{H^1(\mathbb R^N)}$. We argue by contradiction.  Suppose that there are $p_{n,j}^{\varsigma}$,   $g_n$  and $\xi_n$, $\varsigma \in \{+,-\}$ satisfying \eqref{linnn},    $\|g_n\|\to 0$ and $\|\xi_n\|\ge c''>0$.
		We may assume  $\|\xi_n\|^2 =n$, then $\|g_n\|^2 = o(n)$.

		First, we estimate  $c_{n \ell}$.  We have
		\begin{equation}
			c_{n \ell} \sum_{j=1}^n \int_{\mathbb R^N}  \bigg( U_{p_{n,j}^{+}}^{p-1}\overline{\mathbb{Z}}_{\ell j}+U_{p_{n,j}^{-}}^{p-1}\overline{\mathbb{Z}}_{\ell j}\bigg) \overline{\mathbb{Z}}_{q 1}=  \bigl\langle -\Delta {\xi_n}+V(|y|)\xi_n-p{(u_k+Y_{t,l}(y))^{p-1}}\xi_n -g_n,   \overline{\mathbb{Z}}_{q 1}\bigr\rangle, \ell, q =3,4.
		\end{equation}
		Noting that
		\begin{equation*}
			\int_{\mathbb R^N}  -\Delta {\xi_n}\overline{\mathbb{Z}}_{q 1}+V(|y|)\xi_n\overline{\mathbb{Z}}_{q 1}-p U_{p_{n,1}^{+}}^{p-1}\xi_n\overline{\mathbb{Z}}_{q 1} =0,
		\end{equation*}
		and for $n$ large enough, we have
		\begin{align*}
			\int_{\mathbb R^N} \bigg( p{(u_k+Y_{t,l}(y))^{p-1}}-p U_{p_{n,1}^{+}}^{p-1} \bigg)\xi_n\overline{\mathbb{Z}}_{q 1}& =O\big(e^{-|p_{n,1}^{+}-p_{n,2}^{+}|+\tau}+e^{-|p_{n,1}^{+}-x_{1}^{+}|+\tau}\big)(1+\delta_{q4} t)\|\xi_n\|\\
			& =O\big(e^{-|p_{n,1}^{+}-p_{n,2}^{+}|+\tau}\big)(1+\delta_{q4} t) \|\xi_n\|.
		\end{align*}
		On the other hand,
		\begin{eqnarray*}
			\big< U_{p_{n,1}^{+}}^{p-1}\overline{\mathbb{Z}}_{\ell 1}  , \overline{\mathbb{Z}}_{q 1} \big> =\left\{\begin{array}{rcl}  o_k(1),\qquad \text{if}\quad \ell \neq q,
				&&\\[2mm]
				\space\\[2mm]
				\bar{c'} (1+\delta_{q4} t^2),\qquad \text{if} \quad\ell=q. \end{array}\right.
		\end{eqnarray*}
		Thus,
		\begin{equation}
			c_{n \ell}=\frac{1+\delta_{q4} t}{1+\delta_{q4} t^2}\left(\big(e^{-|p_{n,1}^{+}-p_{n,2}^{+}|+\tau}\big) \|\xi_n\|+\|\g_n\|\right)=o(\sqrt{n}).
		\end{equation}

		Meanwhile,  from $\xi_n \in \mathbb{F}_n$, it is standard to prove that
		\[ \xi_n( y+ p_{n, j}) \to  0,\quad \text{in}\;  H^1_{loc}(\mathbb R^N).
		\]
		Moreover, since  $\displaystyle\frac{1}{\sqrt n} \xi_n$ is bounded in $H^1(\mathbb R^N)$, we can assume that
		\[
		\frac{1}{\sqrt n} \xi_n\rightharpoonup \xi,\quad \text{weakly in }\; H^1_{loc}(\mathbb R^N),
		\]
		and
		\[
		\frac{1}{\sqrt n} \xi_n\to  \xi,\quad \text{strongly in }\; L^2_{loc}(\mathbb R^N).
		\]
		Thus, $\xi$ satisfies
		\[
		-\Delta \xi+V(|y|) \xi -pu_k^{p-1}\xi =0  \quad \text{in}\; \mathbb R^N.
		\]
		By  Theorem~\ref{th11}, $\xi=0$.  Therefore,
		\[
		\int_{\mathbb R^N}  \bigl(  u_k+Y_{t,l}\bigr)^{p-1}\xi_n^2=o(n).
		\]
		We also have
		\[
		\langle  g_n,   \xi_n\bigr\rangle= o(n),
		\]
		and
		\[
		\Bigl\langle   c_{n\ell} \Big( U_{p_{n,j}^{+}}^{p-1}\overline{\mathbb{Z}}_{\ell j}+ U_{p_{n,j}^{-}}^{p-1}\underline{\mathbb{Z}}_{\ell j}\Big), \xi_n \Bigr\rangle  =o(n).
		\]
		So we obtain
		\[
		\begin{split}
			& \int_{\mathbb R^N} |\nabla \xi_n|^2+V(|y|) | \xi_n|^2 \\
			=&  \bigg\langle p{(u_k+Y_{t,l}(y))^{p-1}}\xi_n +g_n+\sum\limits_{j=1}^n\sum\limits_{\ell=3}^4 c_{n\ell} \Big( U_{p_{n,j}^{+}}^{p-1}\overline{\mathbb{Z}}_{\ell j}+ U_{p_{n,j}^{-}}^{p-1}\underline{\mathbb{Z}}_{\ell j}\Big),  \xi_n\bigg\rangle \\
			=&
			o(n).
		\end{split}
		\]
		This is a contradiction.

	\end{proof}
	
	\medskip
	
	Deriving from Lemma \ref{lem5.1}, we can obtain the existence, uniqueness results of linearized problem \eqref{linnn}:
	\begin{proposition}\label{pro5.2}
		There exist  $n_0>0  $ and a constant  $C>0 $ such
		that for all  $n\ge n_0 $ and all  $g_n\in H^{1}(\R^N) $, problem
		$(\ref{linnn}) $ has a unique solution  $\xi_n\equiv {\bf \Xi}_n(g_n) $.
	\end{proposition}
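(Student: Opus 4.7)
The plan is to recast the linear problem \eqref{linnn} as an operator equation of the form $(I-\mathcal{T}_n)\xi=\tilde g_n$ on the closed subspace $\mathbb{F}_n\subset H^1(\R^N)$, with $\mathcal{T}_n$ compact, and then invoke the Fredholm alternative, whose injectivity hypothesis is supplied directly by Lemma~\ref{lem5.1}.

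First, I would endow $H^1(\R^N)$ with the equivalent inner product $(u,v)_V:=\int_{\R^N}(\nabla u\cdot\nabla v+V(|y|)uv)$ and note that $\mathbb{F}_n$ is a closed subspace of $X_s\cap H^1(\R^N)$, since the orthogonality conditions defining it are continuous linear constraints. Let $\Pi_n$ be the $(\cdot,\cdot)_V$-orthogonal projection onto $\mathbb{F}_n$. By the Riesz representation theorem I then define a bounded operator $\mathcal{T}_n:\mathbb{F}_n\to\mathbb{F}_n$ and an element $\tilde g_n\in\mathbb{F}_n$ by
\begin{equation*}
(\mathcal{T}_n\xi,\varphi)_V=p\int_{\R^N}(u_k+Y_{t,l})^{p-1}\xi\varphi,\qquad (\tilde g_n,\varphi)_V=\int_{\R^N}g_n\varphi,\qquad\forall\,\varphi\in\mathbb{F}_n.
\end{equation*}

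Next, I would check that $\mathcal{T}_n$ is compact. Since $u_k$ and $Y_{t,l}$ decay exponentially away from the finitely many bubble centers, the weight $(u_k+Y_{t,l})^{p-1}$ lies in $L^\infty(\R^N)\cap L^q(\R^N)$ for every $q\in[1,\infty]$. Given a bounded sequence $\xi_m$ in $\mathbb{F}_n$, the Rellich--Kondrachov theorem provides a subsequence converging strongly in $L^2_{\mathrm{loc}}$; the exponential decay of the weight makes the contribution from the complement of a large ball uniformly small, and a straightforward truncation argument upgrades this to norm convergence of $\mathcal{T}_n\xi_m$ in $H^1$. Testing \eqref{linnn} against elements of $\mathbb{F}_n$ and using that the Lagrange multiplier term $\sum c_{n\ell}\bigl(U_{p_j^+}^{p-1}\overline{\mathbb{Z}}_{\ell j}+U_{p_j^-}^{p-1}\underline{\mathbb{Z}}_{\ell j}\bigr)$ is, by construction, a linear combination of vectors whose $L^2$ pairing with $\mathbb{F}_n$ vanishes \emph{modulo} an invertible Gram matrix on the complementary finite-dimensional space, the problem \eqref{linnn} is equivalent to
\begin{equation*}
(I-\mathcal{T}_n)\xi=\tilde g_n\quad\text{in }\mathbb{F}_n,
\end{equation*}
with $c_{n\ell}$ uniquely recovered afterwards by solving a nondegenerate linear system against the $\overline{\mathbb{Z}}_{\ell j}, \underline{\mathbb{Z}}_{\ell j}$ (nondegeneracy of that system is verified as in the computation of the $c_{n\ell}$ in the proof of Lemma~\ref{lem5.1}).

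Finally, by the Fredholm alternative (since $\mathcal{T}_n$ is compact), $I-\mathcal{T}_n$ is invertible on $\mathbb{F}_n$ if and only if it is injective. Injectivity is precisely the assertion that the homogeneous version of \eqref{linnn} (with $g_n=0$) admits only the trivial solution in $\mathbb{F}_n$, which is the contrapositive form of Lemma~\ref{lem5.1}: if a nontrivial $\xi\in\mathbb{F}_n$ solved \eqref{linnn} with $g_n=0$, normalizing $\|\xi\|_{H^1(\R^N)}=1$ would contradict the conclusion $\|\xi\|_{H^1(\R^N)}\to 0$ of that lemma, provided $n\ge n_0$ for some large $n_0$. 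This choice of $n_0$ yields existence and uniqueness simultaneously, completing the proof. The only point requiring care is the verification that the Lagrange-multiplier reduction actually produces an equation on $\mathbb{F}_n$ of the stated form; no genuine obstacle arises, as the argument is standard once Theorem~\ref{th11} has been upgraded to the uniform estimate of Lemma~\ref{lem5.1}.
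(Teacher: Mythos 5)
The proposal is correct and takes essentially the same route the paper relies on: the paper states only that Proposition~\ref{pro5.2} is ``derived from Lemma~\ref{lem5.1},'' and your reduction to a Fredholm-alternative problem for $I-\mathcal{T}_n$ on $\mathbb{F}_n$ (with compactness of $\mathcal{T}_n$ coming from the exponential decay of $(u_k+Y_{t,l})^{p-1}$, and injectivity for $n\ge n_0$ coming from the a priori estimate of Lemma~\ref{lem5.1} applied to the homogeneous problem) is exactly the standard argument that those words implicitly invoke. Your handling of the Lagrange multipliers $c_{n\ell}$ and the observation that testing against $\mathbb{F}_n$ kills them while the remaining constraints form a nondegenerate finite-dimensional system is also the correct way to make the equivalence with \eqref{linnn} precise.
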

	
	We can rewrite problem \eqref{linnn} as
	\begin{align}\label{linnnn}
		\begin{cases}
			{\bf L_n} \xi_n =  {\bf l_n}  + {\bf R}(\xi_n)+ \sum\limits_{j=1}^n\sum\limits_{\ell=3}^4 c_{n\ell} \Big( U_{p_{j}^{+}}^{p-1}\overline{\mathbb{Z}}_{\ell j}+ U_{p_{j}^{-}}^{p-1}\underline{\mathbb{Z}}_{\ell j}\Big),\\[2mm]
			\xi_n  \in  \mathbb{F}_k,
		\end{cases}
	\end{align}
	where
	\begin{equation*}
		{\bf L_n} \xi_n :=-\Delta {\xi_n}+V(|y|)\xi_n-p{(u_k+Y_{t,l}(y))^{p-1}}\xi_n,
	\end{equation*}
	
	\begin{equation*}
		{\bf l_n} :=-\sum_{j=1}^k (V(|y|)-1) (U_{p^{+}_{j}}+U_{p^{-}_{j}}) +\bigg( (u_k+Y_{t,l}(y))^{p}-u_k^p-\sum_{j=1}^k \big(U_{p^{+}_{j}}^p+U_{p^{-}_{j}}^p\big)\bigg),
	\end{equation*}
	and
	\begin{equation*}
		{\bf R_n}(\xi_n):=(u_k+Y_{t,l}(y)+\xi_n)^p  -(u_k+Y_{t,l}(y))^{p}  -p{(u_k+Y_{t,l}(y))^{p-1}}\xi_n.
	\end{equation*}
	
	Next, we estimate ${\bf l_n}$ and ${\bf R}(\xi_n)$ in \eqref{linnnn}.
	
	\begin{lemma}\label{lem5.3}
		For all $(t,l) \in \mathscr {T}_n$, there is a small $\sigma>0$ that
		\begin{equation}\label{l_n}
			\| {\bf l_n}\| \le  \frac{C }{n^{\frac{m+1}{2}+\sigma}}.
		\end{equation}

	\end{lemma}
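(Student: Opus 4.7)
The strategy is to decompose ${\bf l}_n$ according to the origin of each contribution and estimate each piece separately. Write
\[
{\bf l}_n = A_1 + A_2 + A_3,
\]
where
\[
A_1 = -\sum_{j=1}^{n}(V(|y|)-1)\bigl(U_{p_j^+}+U_{p_j^-}\bigr),\qquad A_3 = Y_{t,l}^p - \sum_{j=1}^{n}\bigl(U_{p_j^+}^p + U_{p_j^-}^p\bigr),
\]
and
\[
A_2 = (u_k + Y_{t,l})^p - u_k^p - Y_{t,l}^p.
\]
Here $A_1$ is the potential error, $A_3$ the interaction inside the new tower, and $A_2$ the cross interaction between the original tower $u_k$ and the new one $Y_{t,l}$.

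For $A_1$, I would use assumption $({\bf V_1})$, which gives $|V(|y|)-1| \le C(1+|y|^m)^{-1}$. On the support of each bubble $U_{p_j^\pm}$ one has $|y|\sim |p_j^\pm|\sim t \sim n\ln n$, so pointwise $|A_1|(y) \le C (n\ln n)^{-m}\sum_j(U_{p_j^+}+U_{p_j^-})(y)$, and a parallel bound holds for $\nabla A_1$ using $({\bf V_2})$ and $|\nabla U|\le CU$ at infinity. Since the $2n$ bubbles have almost disjoint support and each $\|U\|_{H^1}$ is a universal constant,
\[
\|A_1\|_{H^1}^2 \le C n \cdot (n\ln n)^{-2m},\qquad \|A_1\|_{H^1} = O\bigl(n^{-m+\frac12}(\ln n)^{-m}\bigr),
\]
which is dominated by $n^{-(m+1)/2 - \sigma}$ for any $\sigma\in(0,(m-2)/2)$, an admissible range because $m>4$.

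For $A_3$, the dominant contributions near $p_1^+$ come from the two nearest horizontal neighbours $p_2^+,p_n^+$ and the vertical partner $p_1^-$. Using the pointwise inequalities $|(a+b)^p - a^p - b^p| \le C(a^{p-1}b + ab^{p-1})$ when $p\ge 2$ and $|(a+b)^p - a^p - b^p| \le Ca^{p/2}b^{p/2}$ when $1<p<2$, combined with the standard exponential-decay estimates on products of translated $U$'s and the distances
\[
|p_1^+-p_2^+| = 2t\sqrt{1-l^2}\sin(\pi/n) \sim m\ln n,\qquad |p_1^+-p_1^-| = 2tl \sim (m+2)\ln n,
\]
I obtain $\|A_3\|_{H^1} \le C n^{1/2} e^{-\delta\min(|p_1^+-p_2^+|,|p_1^+-p_1^-|)} = O(n^{1/2-\delta m})$ with $\delta$ equal to $1$ in the superquadratic case and to $p/2 - \eta$ in the subquadratic case for arbitrarily small $\eta>0$. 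Because $m>\max\{4/(p-1),4\}$, in either case $1/2-\delta m < -(m+1)/2$ with room to spare, which absorbs a small $\sigma>0$. The cross term $A_2$ is treated similarly but is much smaller: the geometric positions of $\{x_i^\pm\}$ and $\{p_j^\pm\}$ lie in orthogonal $3$-dimensional coordinate subspaces, so $|x_i^\pm - p_j^\pm| = \sqrt{|x_i^\pm|^2 + |p_j^\pm|^2} \gtrsim n\ln n + k\ln k$, giving exponentially fast decay.

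The main obstacle is the book-keeping in $A_3$ for $1<p<2$, where the cross terms $U_{p_i^+}^{p/2}U_{p_j^+}^{p/2}$ do not integrate as cleanly as in the superquadratic case, and one must exploit the precise logarithmic separation together with the lower bound $m>4/(p-1)$ to produce a strictly positive $\sigma$. Once the three pieces are summed the inequality $\|{\bf l}_n\|\le C n^{-(m+1)/2-\sigma}$ follows.
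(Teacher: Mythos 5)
Your proposal is correct and reaches the stated bound, but your route differs from the paper's in a meaningful organizational respect. You decompose ${\bf l}_n = A_1 + A_2 + A_3$ and estimate each piece directly in $L^2$ (or $H^1$), using the almost-disjointness of the $2n$ bubbles to convert a pointwise bound on each sector into a global $L^2$ bound with a clean $\sqrt{n}$ prefactor. The paper instead estimates the duality pairing $\langle {\bf l}_n, \varphi\rangle$ against a test function $\varphi$, introduces the sector decomposition $\R^N = \bigcup_j \widetilde\Omega_j^{\pm}$ explicitly, reduces by symmetry to a single sector, and then applies Cauchy--Schwarz sector-by-sector, splitting the nearby sectors $\bigcup_{i\le 4}\widetilde\Omega_i$ from the far ones to control the potential term. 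The two approaches yield the same conclusion, and your $A_1$ estimate is actually marginally sharper: you obtain $\|A_1\| = O(n^{1/2-m}(\ln n)^{-m})$, which only requires $m>2$, whereas the paper's cruder bound $C n t^{-(m-1/2)}$ (where $t \sim n\ln n$) needs $m>4$ --- both are of course covered by the hypothesis $m>\max\{4/(p-1),4\}$.

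One point worth spelling out, since you flag it as the main obstacle: for the subquadratic case $1<p<2$, the gradient of $A_3 = Y_{t,l}^p - \sum U^p$ does satisfy the same pointwise bound $|\nabla A_3| \lesssim U_{p_i}^{p/2}U_{p_j}^{p/2}$ as $A_3$ itself. Indeed, writing $\nabla A_3 = p\sum_i \bigl(Y^{p-1}-U_{p_i}^{p-1}\bigr)\nabla U_{p_i}$ and, say on the region where $U_{p_1}\ge U_{p_2}$, using the mean-value theorem for the $i=1$ term (yielding $\lesssim U_{p_1}^{p-2}U_{p_2}|\nabla U_{p_1}| \lesssim U_{p_1}^{p-1}U_{p_2}$) and the crude bound $Y^{p-1}\lesssim U_{p_1}^{p-1}$ for the $i=2$ term (yielding $\lesssim U_{p_1}^{p-1}U_{p_2}$), one checks $U_{p_1}^{p-1}U_{p_2} = U_{p_1}^{p/2}U_{p_2}^{p/2}\cdot(U_{p_2}/U_{p_1})^{1-p/2} \le U_{p_1}^{p/2}U_{p_2}^{p/2}$. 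So the $H^1$ bound indeed carries the exponent $\delta = p/2 - \eta$ you claim, and the hypothesis $m>4/(p-1)$ produces a strictly positive $\sigma$ exactly as you say.
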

	
	\begin{proof}
		Divide $\R^N$ into $2n$ parts:
		\begin{equation*}
			\widetilde \Omega_j=\Bigl\{ y=(y',y_4, y_5,y_6 ,y^*)\in\R^3\times \mathbb R^3\times \R^{N-6}:
			\Bigl\langle \frac {( y_4, y_5)}{|(y_4, y_5)|}, (\cos \frac{2(j-1)}{n},\sin \frac{2(j-1)}{n})\Bigr\rangle\ge \cos \frac{\pi}{n}\Bigr\},
		\end{equation*}
		and
		\begin{equation*}
			\widetilde \Omega_j^{+}=\Bigl\{ y:y=(y',y_4, y_5,y_6 ,y^*)\in\widetilde \Omega_j,y_6\ge 0 \Bigr\},
		\end{equation*}
		
		\begin{equation*}
			\widetilde \Omega_j^{-}=\Bigl\{ y:y=(y',y_4, y_5,y_6 ,y^*)\in\widetilde \Omega_j,y_6< 0 \Bigr\}.
		\end{equation*}
		
		Recall that
		\begin{equation*}
			\langle  {\bf l_n}, \varphi \rangle= -\int_{\R^N} \sum_{j=1}^k (V(|y|)-1) (U_{p^{+}_{j}}+U_{p^{-}_{j}}) \varphi+\int_{\R^N}\bigg( (u_k+Y_{t,l}(y))^{p}-u_k^p-\sum_{j=1}^k \big(U_{p^{+}_{j}}^p+U_{p^{-}_{j}}^p\big)\bigg)\varphi.
		\end{equation*}
		For $5\le j\le \displaystyle\frac{n}{2},y\in \widetilde \Omega_j^{+}$ , we have
		\begin{equation*}
			|y-p_1^{+}|\ge (4-\tau)\frac{t\pi}{n}.
		\end{equation*}
		Then
		\begin{align*}
			& \int_{\R^N} \sum_{j=1}^k (V(|y|)-1) (U_{p^{+}_{j}}+U_{p^{-}_{j}}) \varphi\\
			=&2n \int_{\R^N} (V(|y|)-1) U_{p^{+}_{1}} \varphi\\
			\le&C n \bigg(\int_{\cup^4_{i=1}\widetilde\Omega_i} \sum_{j=1}^k (V(|y|)-1) (U_{p^{+}_{j}}+U_{p^{-}_{j}}) \varphi+\int_{\cup^\frac{n}{2}_{i=5}\widetilde\Omega_i} \sum_{j=1}^k (V(|y|)-1) (U_{p^{+}_{j}}+U_{p^{-}_{j}}) \varphi\bigg)\\
			\le&C n \left(\frac{1}{t^{m-\frac{1}{2}}} +\frac{1}{t^{4m-2\tau'}}\right)||\varphi||_{L^2(\R^N)}
			\le C \frac{n}{t^{m-\frac{1}{2}}} ||\varphi||_{L^2(\R^N)}\\
			\le&C \frac{1}{n^{\frac{m+1}{2}+\sigma}} ||\varphi||.
		\end{align*}
		after fixing $\tau'>0$ small enough.
		
		On the other hand,
		\begin{align*}
			&\int_{\R^N}\bigg( (u_k+Y_{t,l}(y))^{p}-u_k^p-\sum_{j=1}^n \big(U_{p^{+}_{j}}^p+U_{p^{-}_{j}}^p\big)\bigg)\varphi\\
			\le&C(k)n\int_{\widetilde\Omega_1^{+}}\bigg( (u_k+Y_{t,l}(y))^{p}-u_k^p-Y_{t,l}(y)^{p}+Y_{t,l}(y)^{p}-\sum_{j=1}^n \big(U_{p^{+}_{j}}^p+U_{p^{-}_{j}}^p\big)\bigg)\varphi\\
			\le&C n \int_{\widetilde\Omega_1^{+}}\bigg( (u_k+Y_{t,l}(y))^{p}-u_k^p-Y_{t,l}(y)^{p}\bigg)+Cn\sum_{j=2}^n\int_{\widetilde\Omega_1^{+}} U_{p^{+}_{1}}^{p-1} U_{p^{+}_{j}} |\varphi|\\
			\le&C n \int_{\widetilde\Omega_1^{+}}\bigg( (u_k+Y_{t,l}(y))^{p}-u_k^p-Y_{t,l}(y)^{p}\bigg)+  Cn^{\frac{1}{2}}\sum_{j=2}^n \bigg( \int_{\widetilde\Omega_1^{+}}  U_{p^{+}_{1}}^{2p-2} U_{p^{+}_{j}}^2  \bigg)^{\frac{1}{2}}||\varphi||.
		\end{align*}
		If $p< 2$, then
		\begin{align*}
			\int_{\widetilde\Omega_1^{+}}\bigg( (u_k+Y_{t,l}(y))^{p}-u_k^p-Y_{t,l}(y)^{p}\bigg)
			\le C \int_{\widetilde\Omega_1^{+}} u_k^{\frac{p}{2}} Y_{t,l}(y)^{\frac{p}{2}}
			=O(e^{(-\frac{p}{2}+\sigma)|{p^{+}_{1}-{x^{+}_{1}}|}}),
		\end{align*}
		and
		\begin{align*}
			\sum_{j=2}^n \bigg( \int_{\widetilde\Omega_1^{+}}  U_{p^{+}_{1}}^{2p-2} U_{p^{+}_{j}}^2  \bigg)
			\le\sum_{j=2}^n \bigg( e^{(-\frac{p}{2}+\sigma)|{p^{+}_{j}-{p^{+}_{1}}}|}\bigg)\le\frac{C}{n^{(\frac{p}{2}-\sigma)m-\sigma'}}.
		\end{align*}
		If $p\ge 2$, then
		\begin{align*}
			\int_{\widetilde\Omega_1^{+}}\bigg( (u_k+Y_{t,l}(y))^{p}-u_k^p-Y_{t,l}(y)^{p}\bigg)
			\le C \int_{\widetilde\Omega_1^{+}} \bigg(u_k^{p-1} Y_{t,l}(y)+u_kY_{t,l}(y)^{p-1}\bigg)
			=O\big(e^{(-\frac{p}{2}+\sigma)|{p^{+}_{1}-{x^{+}_{1}}|}}\big),
		\end{align*}
		and for $\sigma'>0$ small enough,
		\begin{align*}
			\sum_{j=2}^n \bigg( \int_{\widetilde\Omega_1^{+}}  U_{p^{+}_{1}}^{2p-2} U_{p^{+}_{j}}^2  \bigg)
			\le\sum_{j=2}^n \bigg( e^{-|{p^{+}_{j}-{p^{+}_{1}}}|}\bigg)\le\frac{C}{n^{m-\sigma'}}.
		\end{align*}
		Thus, for $n>>k$, we have
		\begin{align*}
			&\int_{\R^N}\bigg( (u_k+Y_{t,l}(y))^{p}-u_k^p-\sum_{j=1}^n \big(U_{p^{+}_{j}}^p+U_{p^{-}_{j}}^p\big)\bigg)\varphi\\
			\le& \frac{C}{n^{\min\{\frac{p}{2}-\sigma,1\}m-\frac{1}{2}-\sigma'}}||\varphi||+O(n e^{(-\frac{p}{2}+\sigma)|{p^{+}_{1}-{x^{+}_{1}}|}})||\varphi|| \\
			\le&\frac{C}{n^{\min\{\frac{p}{2}-\sigma,1\}m-\frac{1}{2}-\sigma'}}||\varphi||.
		\end{align*}
		Noting that $m>\max\bigg\{\displaystyle\frac{4}{p-1},4\bigg\}$, then we have
		\begin{equation*}
			\min\bigg\{\frac{p}{2}-\sigma,1\bigg\}m-\frac{1}{2}-\sigma'>\frac{m+1}{2}.
		\end{equation*}
		Therefore,
		\begin{equation*}
			\bigg|\int_{\R^N}\bigg( (u_k+Y_{t,l}(y))^{p}-u_k^p-\sum_{j=1}^n \big(U_{p^{+}_{j}}^p+U_{p^{-}_{j}}^p\big)\bigg)\varphi \bigg|\le  C \frac{1}{n^{\frac{m+1}{2}+\sigma}} ||\varphi||.
		\end{equation*}
	\end{proof}

	\begin{lemma}\label{lem5.4}
		It holds
		\begin{equation}\label{R_n1}
			\|{\bf R_n}(\xi)\|\le C\|\xi\|^{\min\{p, 2\}}.
		\end{equation}
		Moreover,
		\begin{equation}\label{R_n2}
			\|{\bf R_n}(\xi_1)- {\bf R_n}(\xi_2)\|\le C \Bigl(  \|\xi_1\|^{\min\{p-1, 1\}} +\|\xi_2\|^{\min\{p-1, 1\}}\Bigr)\|\xi_1-\xi_2\|.
		\end{equation}

	\end{lemma}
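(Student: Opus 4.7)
The plan is to derive both estimates from pointwise bounds on $\mathbf{R_n}(\xi)$ and then dualize against test functions $\varphi\in H^1(\mathbb R^N)$, using H\"older's inequality together with the subcritical Sobolev embedding $H^1(\mathbb R^N)\hookrightarrow L^{p+1}(\mathbb R^N)$ (available since $1<p<(N+2)/(N-2)$ gives $p+1<2^*$). Writing $W:=u_k+Y_{t,l}$, which is uniformly bounded in $L^\infty(\mathbb R^N)$ and decays exponentially, Taylor's formula with integral remainder gives
\[
\mathbf{R_n}(\xi)=p(p-1)\,\xi^{2}\int_{0}^{1}(1-s)\,(W+s\xi)^{p-2}\,ds,
\]
from which, after splitting $(W+s\xi)^{p-2}\le C\,(W^{p-2}+|\xi|^{p-2})$ in the case $p\ge 2$, one reads off the pointwise bound $|\mathbf{R_n}(\xi)|\le C\,W^{p-2}\xi^{2}+C|\xi|^{p}$. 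In the complementary range $1<p<2$, the elementary inequality $|(a+b)^p-a^p-pa^{p-1}b|\le C|b|^p$ yields directly $|\mathbf{R_n}(\xi)|\le C|\xi|^p$.

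For \eqref{R_n1}, I pair $\mathbf{R_n}(\xi)$ with $\varphi\in H^1(\mathbb R^N)$. The universal piece $\int|\xi|^p\varphi$ is controlled by $\|\xi\|_{L^{p+1}}^{p}\|\varphi\|_{L^{p+1}}\le C\|\xi\|^{p}\|\varphi\|$. When $1<p<2$ this already delivers $\|\mathbf{R_n}(\xi)\|\le C\|\xi\|^{p}=C\|\xi\|^{\min\{p,2\}}$. When $p\ge 2$ the same piece is higher order in the small-$\|\xi\|$ regime where the lemma is applied, and the leading weighted quadratic piece is estimated by a three-factor H\"older exploiting that each translate $U$ belongs to every $L^{q}$, so $\|W^{p-2}\|_{L^{N/2}}$ is bounded uniformly in $(t,l)$; this gives
\[
\int_{\mathbb R^N}W^{p-2}\,\xi^{2}\,\varphi\le \|W^{p-2}\|_{L^{N/2}}\,\|\xi\|_{L^{2^*}}^{2}\,\|\varphi\|_{L^{2^*}}\le C\|\xi\|^{2}\|\varphi\|,
\]
yielding $\|\mathbf{R_n}(\xi)\|\le C\|\xi\|^{\min\{p,2\}}$. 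The difference estimate \eqref{R_n2} is obtained in the same spirit from the identity
\[
\mathbf{R_n}(\xi_1)-\mathbf{R_n}(\xi_2)=p\int_{0}^{1}\Bigl[\bigl(W+\xi_{2}+s(\xi_{1}-\xi_{2})\bigr)^{p-1}-W^{p-1}\Bigr]\,ds\;(\xi_{1}-\xi_{2}),
\]
whose bracketed factor is controlled pointwise by $C(|\xi_{1}|+|\xi_{2}|)^{\min\{p-1,1\}}$: by the mean value theorem plus boundedness of $W$ when $p-1\ge 1$, and by the H\"older continuity $|a^{p-1}-b^{p-1}|\le C|a-b|^{p-1}$ when $0<p-1<1$. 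Multiplying by $\xi_1-\xi_2$ and dualizing against $\varphi$ with the same bookkeeping then produces the claimed estimate.

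The principal technical obstacle is the H\"older bookkeeping: arranging the exponents so that each factor of $\xi$, $\xi_{1}-\xi_{2}$, and $\varphi$ lands in a Sobolev-embedded Lebesgue space. This is where the subcriticality $p<(N+2)/(N-2)$ is crucial, and the exponential localization of $W$ is what allows the weighted-quadratic piece in $p\ge 2$ to be handled uniformly in $N$. All remaining computations are routine and parallel the standard remainder estimates already used in Section~\ref{sec2}.
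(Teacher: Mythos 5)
The paper does not actually prove Lemma~\ref{lem5.4} --- it is stated without demonstration, used in the contraction-mapping step, and treated as a standard remainder estimate --- so there is no paper proof to compare against. Your overall strategy (Taylor remainder, pointwise bounds, then dualize against $\varphi\in H^1(\R^N)$ with H\"older and the subcritical Sobolev embedding $H^1\hookrightarrow L^{p+1}$) is the right and standard one, and the interpretation of $\|\cdot\|$ as the dual norm is consistent with how the paper itself estimates $\|\mathbf{l_n}\|$ in Lemma~\ref{lem5.3}. Your treatment of the range $1<p<2$ is sound: the pointwise bound $|\mathbf{R_n}(\xi)|\le C|\xi|^p$ with the split $\frac{p}{p+1}+\frac{1}{p+1}=1$ gives \eqref{R_n1}, and the split $\frac{p-1}{p+1}+\frac{1}{p+1}+\frac{1}{p+1}=1$ together with $|a^{p-1}-b^{p-1}|\le C|a-b|^{p-1}$ gives \eqref{R_n2}.

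However, your H\"older bookkeeping in the $p\ge2$ branch is wrong. You place $W^{p-2}\in L^{N/2}$, $\xi^2\in L^{2^*/2}$, and $\varphi\in L^{2^*}$, but
\[
\frac{2}{N}+\frac{2}{2^*}+\frac{1}{2^*}=\frac{3N-2}{2N}>1
\]
for every $N\ge2$, so this is not a valid application of H\"older. For the three exponents to sum to $1$ with $\xi,\varphi$ in Sobolev-embedded spaces, the factor $W^{p-2}$ would have to lie in $L^{2N/(6-N)}$, which is impossible for $N>6$ and forces $L^\infty$ at $N=6$. The error is ultimately harmless in the paper's setting: Section~4 and Theorem~\ref{th1.2} assume $N\ge6$, whence $p<\frac{N+2}{N-2}\le2$, so the $p\ge2$ branch never arises and only your correct $1<p<2$ argument is actually used. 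It would be cleaner, and would avoid writing down a false inequality, to observe at the outset that $N\ge6$ forces $p<2$, so $\min\{p,2\}=p$ and $\min\{p-1,1\}=p-1$, and then carry out only the $1<p<2$ case.
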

	
	\medskip
	
	Using Proposition \ref{pro5.2} and Lemmas~\ref{lem5.3}-\ref{lem5.4}, we can prove the following proposition in a standard way.

	\begin{proposition}\label{pro5.5}
		There is an integer $n_0>0$, such that for each $n\ge n_0$  and  $(t,l) \in \mathscr {T}_n$,   \eqref{linnnn} has a solution $\xi_n$ for some constants
		$c_{n \ell}$, and
		\[
		\|\xi_n\|\le \frac{C }{n^{\frac{m+1}{2}+\sigma}},
		\]
		where $\sigma>0$ is a small constant.\\
		
		Moreover,  $\xi_n$ can be recognized as a $C^1$ map from $ \mathscr {T}_n$ to $\mathbb{F}_n$, which we denote as $\xi_{t,l}$.
	\end{proposition}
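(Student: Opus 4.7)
The plan is to recast the nonlinear problem \eqref{linnnn} as a fixed-point equation via the invertibility theory of Proposition \ref{pro5.2}, then apply the Banach contraction mapping theorem in a small ball of $\mathbb{F}_n$ under the $H^1$-norm. For the $C^1$ dependence on $(t,l)$, I would invoke the implicit function theorem.

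Let ${\bf \Xi_n}$ denote the bounded inverse given by Proposition \ref{pro5.2}, and observe from the contradiction proof of Lemma \ref{lem5.1} that its operator norm is bounded by some $C_0$ independent of $n$, provided $n\ge n_0$. Setting
$$
{\bf A}(\xi) := {\bf \Xi_n}\bigl({\bf l_n} + {\bf R_n}(\xi)\bigr),
$$
solving \eqref{linnnn} (with the Lagrange multipliers $c_{n\ell}$ produced automatically) is equivalent to solving ${\bf A}(\xi)=\xi$. Fix a large constant $C_*>0$ to be determined and work in the closed ball $B := \{\xi \in \mathbb{F}_n : \|\xi\|\le C_* n^{-(\frac{m+1}{2}+\sigma)}\}$. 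The self-mapping property ${\bf A}(B)\subset B$ follows by combining Lemma \ref{lem5.3} with \eqref{R_n1}:
$$
\|{\bf A}(\xi)\|\le C_0\|{\bf l_n}\|+C_0\|{\bf R_n}(\xi)\|\le \frac{C_0 C}{n^{\frac{m+1}{2}+\sigma}}+C_0 C\|\xi\|^{\min\{p,2\}},
$$
and since $\min\{p,2\}>1$ the nonlinear contribution is $o\bigl(n^{-(\frac{m+1}{2}+\sigma)}\bigr)$ on $B$, so $C_*$ may be chosen to absorb all constants. The contraction estimate follows from \eqref{R_n2}: for $\xi_1,\xi_2\in B$,
$$
\|{\bf A}(\xi_1)-{\bf A}(\xi_2)\|\le C_0 C\bigl(\|\xi_1\|^{\min\{p-1,1\}}+\|\xi_2\|^{\min\{p-1,1\}}\bigr)\|\xi_1-\xi_2\|\le \tfrac{1}{2}\|\xi_1-\xi_2\|
$$
for $n$ large, since the prefactor tends to $0$. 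Banach's theorem then yields a unique $\xi_n\in B$ satisfying the claimed norm bound.

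For the $C^1$ dependence on the parameters, I would define $F:\mathscr{T}_n\times\mathbb{F}_n\to\mathbb{F}_n$ by
$$
F(t,l,\xi):=\xi-{\bf \Xi_n}\bigl({\bf l_n}(t,l)+{\bf R_n}(\xi)\bigr),
$$
and apply the implicit function theorem at $(t,l,\xi_{t,l})$. The derivative $\partial_\xi F=I-{\bf \Xi_n}\circ D{\bf R_n}(\xi_{t,l})$ is a compact perturbation of the identity whose perturbative part has operator norm $O(\|\xi_{t,l}\|^{\min\{p-1,1\}})=o(1)$ by \eqref{R_n2}, hence invertible via a Neumann series for $n$ large. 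Since the centers $p_j^{\pm}=p_j^{\pm}(t,l)$ depend smoothly on $(t,l)$, the map $(t,l)\mapsto {\bf l_n}(t,l)$ is smooth into $H^1(\R^N)$, which upgrades $\xi_{t,l}$ to a $C^1$ map of $(t,l)$ into $\mathbb{F}_n$.

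The fixed-point step itself is routine once Lemmas \ref{lem5.1}--\ref{lem5.4} are in hand; the only genuinely delicate ingredient is that the norm of ${\bf \Xi_n}$ be \emph{uniformly} controlled in $n$, which is precisely the content of the blow-up argument in Lemma \ref{lem5.1} and which, through that proof, is the point where the non-degeneracy of $u_k$ established in Theorem \ref{th11} is consumed.
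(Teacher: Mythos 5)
Your argument is exactly the standard Lyapunov--Schmidt contraction-mapping scheme that the paper alludes to with the phrase ``we can prove the following proposition in a standard way'' (combining Proposition~\ref{pro5.2} with Lemmas~\ref{lem5.3}--\ref{lem5.4}), and the self-mapping, contraction, and implicit-function-theorem steps are all correctly carried out. The remark about compactness of the perturbation is superfluous (the Neumann series for $I-{\bf \Xi_n}\circ D{\bf R_n}(\xi_{t,l})$ only needs the small operator norm, which you already established), but this does not affect the validity of the proof.
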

	
	\begin{remark}
		Since $k$ is fixed, as $n \to \infty$, $u_k$ always plays a negligible role in the estimations of ${\bf l_n}$ and ${\bf R_n}(\xi)$ for $n>>k$, and that's why the results of Lemmas \ref{lem5.3}-\ref{lem5.4} and Proposition \ref{pro5.5} are quite similar to that of Proposition 2.3 and Lemma 2.4 in \cite{GMPY1}.
	\end{remark}

	Next, Let $ \xi_{t,l}$ be a function obtained in Proposition \ref{pro5.5}, $(t,l) \in{{\mathscr T}_n}$. We need to find a critical point of the energy functional of the form $u_k +Y_{t,l}(y) +\xi_{t,l}$. Define
	\begin{equation}
		G(t,l)=I(u_k +Y_{t,l}(y) +\xi_{t,l}).
	\end{equation}
	
	By a standard calculation which is similar to the calculation of $F(r,h)$, we have the expansion of $G(t,l)$:
	\begin{proposition}\label{pro5.7}
		Suppose that $N\ge 6, V(|y|)$ satisfies $({\bf V_1})$ and $({\bf V_2})$,  then we have the following expansion as $n \to \infty:$
		
		\begin{equation}
			\begin{aligned}\label{GGG}
				G(t,l)
				=&I(u_k) + n \Bigg(\frac{A_1}{t^m} + A_2 -2 (B_1+o(1))U(|{p^{+}_{2}}-{p^{+}_{1}}|)\\
				&\qquad\qquad -(B_1+o(1))U(|{p^{+}_{1}}-{p^{-}_{1}}|)+ O_n\left(\frac{1}{t^{m+2\sigma}}\right)\Bigg),
			\end{aligned}
		\end{equation}
		\begin{equation}
			\begin{aligned}\label{Gt}
				\frac{\partial G(t,l)}{\partial t} &\,=\, n \Big(-\frac{m A_1}{t^{m+1}} + \frac{4 B_1\pi}{n}\sqrt{1-l^2 } e^{- 2 \pi \sqrt{1-l^2 } \frac{t}{n} }\left(   2 \pi \sqrt{1-l^2 } \frac{t}{n} \right)^{-\frac{N-1}{2}}  \Big) \\
				&\quad+ n \left( O_n\left(\frac{1}{t^{m+1+2\sigma}}\right)+o_n\left(\frac{e^{-2 \pi \sqrt{1-l^2 } \frac{t}{n} }}{n} \left(   2 \pi \sqrt{1-l^2 } \frac{t}{n} \right)^{-\frac{N-1}{2}} \right) \right),
			\end{aligned}
		\end{equation}
		and
		\begin{equation}
			\begin{aligned} \label{Gl}
				\frac{\partial G(t,l)}{\partial l}
				&\,=\,   n \bigg(- \frac{4 B_1\pi tl}{n\sqrt{1-l^2}} e^{- 2 \pi \sqrt{1-l^2 } \frac{t}{n} }\left(   2 \pi \sqrt{1-l^2 } \frac{t}{n} \right)^{-\frac{N-1}{2}}  \Big) \\
				&\quad+ 2B_1 t e^{-2tl} \left( 2tl \right)^{-\frac{N-1}{2}}+ o_n\left(\frac{\ln n}{n} e^{-2 \pi \sqrt{1-l^2 } \frac{t}{n}} \left(  2 \pi \sqrt{1-l^2 } \frac{t}{n} \right)^{-\frac{N-1}{2}} \right)  \Bigg)
			\end{aligned}
		\end{equation}
		where  $A_1, A_2, B_1$ are  positive constants in \eqref{AAB}.
		
	\end{proposition}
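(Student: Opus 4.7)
The plan is to expand $G(t,l) = I(u_k + Y_{t,l} + \xi_{t,l})$ in two stages. First, I Taylor-expand around $u_k + Y_{t,l}$ using the smallness of $\xi_{t,l}$ from Proposition~\ref{pro5.5}:
\begin{equation*}
G(t,l) = I(u_k + Y_{t,l}) + \bigl\langle I'(u_k + Y_{t,l}), \xi_{t,l}\bigr\rangle + O\bigl(\|\xi_{t,l}\|^2\bigr).
\end{equation*}
By the equation \eqref{linnnn} satisfied by $\xi_{t,l}$, the gradient $I'(u_k + Y_{t,l})$ is precisely $-{\bf l_n}$ up to the higher-order contribution encoded in ${\bf R_n}(\xi_{t,l})$. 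Hence $|\langle I'(u_k + Y_{t,l}), \xi_{t,l}\rangle| \le \|{\bf l_n}\| \, \|\xi_{t,l}\| \le C n^{-(m+1)-2\sigma}$ by Lemma~\ref{lem5.3} and Proposition~\ref{pro5.5}; the quadratic remainder is strictly smaller. All such contributions are absorbed into the error term $n\cdot O_n(t^{-m-2\sigma})$ appearing in \eqref{GGG}.

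Second, I isolate the cross-interaction between $u_k$ and $Y_{t,l}$. Integrating by parts using $-\Delta u_k + V(|y|) u_k = u_k^p$ gives
\begin{equation*}
I(u_k + Y_{t,l}) - I(u_k) - I(Y_{t,l}) = \int_{\R^N} u_k^p\, Y_{t,l} - \frac{1}{p+1}\int_{\R^N}\bigl[(u_k+Y_{t,l})^{p+1} - u_k^{p+1} - Y_{t,l}^{p+1}\bigr].
\end{equation*}
Since $u_k$ concentrates on the cylinder in the $(y_1,y_2,y_3)$-slab while $Y_{t,l}$ concentrates on the cylinder in the $(y_4,y_5,y_6)$-slab of $\R^N$, the mutual distance between any $x_i^{\pm}$ and $p_j^{\pm}$ is bounded below by a quantity of order $n \ln n$. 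By the exponential decay of $U$, both integrals on the right are $O(e^{-(1-\sigma) n \ln n})$, a quantity far smaller than $n/t^{m+2\sigma}$. Applying Proposition~\ref{pro2.5} to $I(Y_{t,l})$, with the relabelling $r \mapsto t$, $h \mapsto l$, $k \mapsto n$, and noting that the additional $N-6$ passive coordinates enter only trivially, yields \eqref{GGG}.

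For the derivatives \eqref{Gt} and \eqref{Gl}, I differentiate $G(t,l)$ and again use that $\xi_{t,l}$ solves \eqref{linnnn}:
\begin{equation*}
\partial_t G = \partial_t I(u_k + Y_{t,l}) + \Big\langle \sum_{j,\ell} c_{n\ell}\bigl(U_{p_j^+}^{p-1}\overline{\mathbb{Z}}_{\ell j} + U_{p_j^-}^{p-1}\underline{\mathbb{Z}}_{\ell j}\bigr),\; \partial_t(Y_{t,l}+\xi_{t,l})\Big\rangle + O\bigl(\|\xi_{t,l}\|\cdot\|\partial_t\xi_{t,l}\|\bigr).
\end{equation*}
Differentiating the orthogonality conditions defining $\mathbb{F}_n$ controls the projections of $\partial_t\xi_{t,l}$ on $\overline{\mathbb{Z}}_{\ell j}$ and $\underline{\mathbb{Z}}_{\ell j}$, so the sum involving $c_{n\ell}$ is a small correction absorbable in the error terms. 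The remaining main piece $\partial_t I(u_k+Y_{t,l}) = \partial_t I(Y_{t,l}) + \partial_t(\text{interaction})$ then reduces to the claimed expression by Proposition~\ref{pro2.5}; the identical argument handles $\partial_l G$.

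The main technical obstacle is the bookkeeping for $\partial_{(t,l)}\xi_{t,l}$: one has to show this parameter-derivative remains small in $H^1(\R^N)$. This follows by differentiating the fixed-point identity \eqref{linnnn} in $(t,l)$ and applying the invertibility of the linearized operator from Proposition~\ref{pro5.2} to the resulting linear equation, together with the estimates on $\partial_{(t,l)}{\bf l_n}$ and on the derivatives of $Y_{t,l}$ that carry the expected factor $1+\delta_{\ell 4}\, t$.
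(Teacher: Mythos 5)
Your expansion of $G(t,l)$ is correct and follows the same route as the paper. The identity $I'(u_k + Y_{t,l}) = -{\bf l_n}$ (which you invoke) holds exactly once one substitutes $-\Delta u_k + V u_k = u_k^p$ and $-\Delta Y_{t,l} + Y_{t,l} = \sum(U_{p_j^+}^p + U_{p_j^-}^p)$ into the definition of ${\bf l_n}$, and your cross-interaction term $I(u_k+Y_{t,l})-I(u_k)-I(Y_{t,l})$ equals $\frac{1}{p+1}\bar R_{n,2}$ in the paper's notation. This together with $\|{\bf l_n}\|\,\|\xi_{t,l}\| = O(n^{-(m+1)-2\sigma})$ and $\bar R_{n,1}=O(\|\xi_{t,l}\|^2)$ reproduces the paper's \eqref{G1}--\eqref{G4}. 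Your lower bound $|x_i^{\pm}-p_j^{\pm}|\gtrsim t\sim n\ln n$ (from orthogonality of the two supporting cylinders) is the same device the paper uses to make $\bar R_{n,2}$ superexponentially small.

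Your intermediate formula for $\partial_t G$, however, is not correct as written. Since $\xi_{t,l}$ solves \eqref{linnnn} exactly, one has the exact identity $I'(u_k+Y_{t,l}+\xi_{t,l}) = \sum_{j,\ell} c_{n\ell}\bigl(U_{p_j^+}^{p-1}\overline{\mathbb{Z}}_{\ell j}+U_{p_j^-}^{p-1}\underline{\mathbb{Z}}_{\ell j}\bigr)$, and therefore $\partial_t G = \bigl\langle \sum_{j,\ell} c_{n\ell}(\cdots),\,\partial_t(Y_{t,l}+\xi_{t,l})\bigr\rangle$ with no separate $\partial_t I(u_k+Y_{t,l})$ term; adding that term as you do double-counts the leading order. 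The correct (and standard) route to \eqref{Gt}--\eqref{Gl}, consistent with the paper's remark to ``proceed as in Proposition~\ref{pro2.5},'' is to differentiate the expansion you already established: write $G - I(u_k+Y_{t,l}) = -\langle {\bf l_n},\xi_{t,l}\rangle + \frac12\langle {\bf L_n}\xi_{t,l},\xi_{t,l}\rangle + O(\|\xi_{t,l}\|^{\min\{p+1,3\}})$ and apply $\partial_t$, which produces error terms of size $O\bigl(\|\partial_t{\bf l_n}\|\,\|\xi_{t,l}\| + \|{\bf l_n}\|\,\|\partial_t\xi_{t,l}\| + \|\xi_{t,l}\|\,\|\partial_t\xi_{t,l}\|\bigr)$; the leading term $\partial_t I(u_k+Y_{t,l}) = \partial_t I(Y_{t,l}) + \partial_t(\text{cross interaction})$ then reduces to \eqref{Gt} as you claim. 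For this one does indeed need the $C^1$ smallness of $\xi_{t,l}$ in $(t,l)$; your final paragraph, differentiating the fixed-point identity in parameters and invoking Proposition~\ref{pro5.2}, is the right way to supply it, so the argument closes once the derivative formula is repaired.
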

	\begin{proof}
		By some calculation, we have
		\begin{equation}\label{G1}
			G(t,l) =I(u_k(y))+I(Y_{t,l})+\frac{1}{2} \langle{\bf{L_n}}\xi_n,\xi_n\rangle-\langle{\bf {l_n}},\xi_n\rangle-\frac{1}{p+1}({\bf \bar R_{n,1}}-{\bf \bar R_{n,2}}),
		\end{equation}
		where
		\begin{equation}\label{G2}
			\begin{aligned}
				{\bf \bar R_{n,1}}=&\displaystyle\int_{\R^N}\bigg((u_k+Y_{t,l}+\xi_n)^{p+1}-(u_k+Y_{t,l})^{p+1}-(p+1)(u_k+Y_{t,l})^p\xi_n-\frac{(p+1)p}{2}(u_k+Y_{t,l})^{p-1}\xi_n^2\bigg)\\
				=&O(||\xi_n||^{p+1}_{H^1(\R^N)})=O(||\xi_n||^{2}_{H^1(\R^N)}),
			\end{aligned}
		\end{equation}
		and
		\begin{equation}\label{G3}
			{\bf \bar R_{n,2}}=\displaystyle\int_{\R^N} \bigg(u_k^{p+1}+Y_{t,l}^{p+1}+(p+1)u_k^pY_{t,l} -  (u_k+Y_{t,l})^{p+1}\bigg)=O(e^{{-(\frac{p+1}{2}+\sigma)}|p_1^{+}-x_1^{+}|})=o_n\left(\frac{1}{t^{m+1+2\sigma}}\right),
		\end{equation}
		for $n>>k$ being large enough.
		Combining \eqref{G1}--\eqref{G3}, we obtain
		\begin{equation}\label{G4}
			\begin{aligned}
				G(t,l) =&I(u_k(y))+I(Y_{t,l})+\frac{1}{2} \langle{\bf{L_n}}\xi_n,\xi_n\rangle-\langle{\bf {l_n}},\xi_n\rangle-\frac{1}{p+1}({\bf \bar R_{n,1}}-{\bf \bar R_{n,2}})\\
				=&I(u_k(y))+I(Y_{t,l})+O(||{\bf {l_n}}|| \ ||\xi_n||+||\xi_n||^{2})+o_n\left(\frac{1}{t^{m+1+2\sigma}}\right)\\
				=&I(u_k(y))+I(Y_{t,l})+O_n\left(\frac{1}{t^{m+1+2\sigma}}\right).
			\end{aligned}
		\end{equation}
		The following calculation is standard, we just need to proceeding exactly as in Proposition \ref{pro2.5}, then we can get \eqref{GGG} and the expression \eqref{Gt} and \eqref{Gl} for its partial derivatives.

	\end{proof}

	\begin{proof} [{\bf Proof of Theorem~\ref{th1.2}}]
		According to Proposition \ref{pro5.7}, we can find  main term of $G(t,l)$ has a maximum point
		\begin{equation*}
			(\tilde{t_n},\tilde{l_n})=\left( \left(\frac{m}{2\pi}+o(1)\right)n\ln n, \left(\frac{\pi(m+2)}{m}+o(1)\right) \frac{1}{n}\right),
		\end{equation*} which is in the interior of ${{\mathscr T}_k}$. Similar to the argument in Section \ref{sec2},
		$G(t,l)$ has a maximum point $(t_n,l_n)$ in the interior of ${{\mathscr T}_k}$.
		
		Therefore,
		we can prove that \eqref{equation} has a solution $u_{k,n}$
		of the form
		\[
		u_{k,n} = u_k+Y_{t_n,l_n}+\xi_n.
		\]

	\end{proof}

	\medskip

	\appendix
	\section{Pohozaev identities}\label{appendixA}
	Let $u>0$ solve

	\begin{equation}\label{1-26-11}
		-\Delta u + V(|y|)u= u^p, \  \hbox{ in } \mathbb{R}^{N},
	\end{equation}
	and let $\xi$ solve
	
	\begin{equation}\label{2-26-11}
		-\Delta \xi +  V(|y|)\xi =p u^{p-1}\xi \  \hbox{ in }\mathbb{R}^{N}.
	\end{equation}
	The following identity can be found in \cite{GMPY1}.

	\begin{lemma}\label{l0-14-12}
		Assume that $\Omega $ is a smooth bounded domain in $\mathbb{R}^{N}$, then We have
		
		\begin{equation}\label{1-20-12}
			\begin{split}
				- \int_{\partial \Omega}\frac{\partial u}{\partial \nu} \frac{\partial \xi}{\partial y_i}- \int_{\partial \Omega}\frac{\partial \xi}{\partial \nu} \frac{\partial u}{\partial y_i}+
				\int_{\partial \Omega} \bigl\langle \nabla u, \nabla \xi\bigr\rangle \nu_i +\int_{\partial \Omega}V(|y|) u \xi \nu_i  -\int_{\partial \Omega} u^p \xi \nu_i=\int_\Omega u\xi \frac{\partial V }{\partial y_i}
			\end{split}
		\end{equation}

	\end{lemma}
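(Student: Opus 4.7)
My plan is to derive the identity in the standard way by testing each PDE against the translation-direction derivative of the other solution, adding the two resulting integral identities, and then converting the bulk integrals into boundary integrals via integration by parts. More precisely, I would multiply \eqref{1-26-11} by $\partial \xi/\partial y_i$ and \eqref{2-26-11} by $\partial u/\partial y_i$, integrate each over $\Omega$, and sum.

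The next step is to handle each of the three natural groupings of terms. For the Laplacian terms, integration by parts gives
\[
\int_\Omega (-\Delta u)\,\partial_i \xi = -\int_{\partial\Omega}\partial_\nu u\,\partial_i\xi + \int_\Omega \nabla u\cdot \nabla \partial_i\xi,
\]
and symmetrically for $\int_\Omega (-\Delta \xi)\,\partial_i u$. Adding these and using
\[
\nabla u\cdot \nabla\partial_i\xi + \nabla\xi\cdot\nabla \partial_i u = \partial_i(\nabla u\cdot\nabla \xi),
\]
followed by another integration by parts, produces precisely the three boundary integrals $-\int_{\partial\Omega}\partial_\nu u\,\partial_i\xi-\int_{\partial\Omega}\partial_\nu \xi\,\partial_i u+\int_{\partial\Omega}\langle \nabla u,\nabla \xi\rangle \nu_i$ on the left-hand side of \eqref{1-20-12}.

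For the potential terms I would combine them as $V(|y|)u\,\partial_i\xi + V(|y|)\xi\,\partial_i u = V(|y|)\,\partial_i(u\xi)$, and integration by parts yields $\int_{\partial\Omega} V(|y|)u\xi\,\nu_i - \int_\Omega u\xi\,\partial V/\partial y_i$; note that since $V$ depends on $y$, this is exactly where the interior term $\int_\Omega u\xi\,\partial V/\partial y_i$ on the right-hand side of \eqref{1-20-12} is generated. For the nonlinear terms, the key algebraic identity is
\[
u^p\,\partial_i\xi + p u^{p-1}\xi\,\partial_i u = \partial_i(u^p\xi),
\]
whose integral over $\Omega$ becomes the single boundary term $\int_{\partial\Omega} u^p\xi\,\nu_i$. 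Rearranging and collecting signs then delivers \eqref{1-20-12}.

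There is no serious obstacle here; the identity is essentially a bilinear Pohozaev-type identity and the derivation is linear algebra plus the divergence theorem. The only point that needs care is tracking the signs when moving the nonlinear and potential boundary terms to the correct side and verifying that all bulk terms cancel except for the single $\int_\Omega u\xi\,\partial V/\partial y_i$ surviving on the right, which arises precisely because $V$ is not constant. Because the exposition cites \cite{GMPY1} for a nearly identical identity, no new ingredient is required beyond this bookkeeping.
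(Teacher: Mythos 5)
Your proposal is correct and follows exactly the same route as the paper's (commented-out) derivation: multiply the two equations by $\partial\xi/\partial y_i$ and $\partial u/\partial y_i$ respectively, sum, and integrate by parts term by term using the product-rule identities $\nabla u\cdot\nabla\partial_i\xi+\nabla\xi\cdot\nabla\partial_i u=\partial_i(\nabla u\cdot\nabla\xi)$, $V\,\partial_i(u\xi)$, and $\partial_i(u^p\xi)$. No gaps.
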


	\medskip

	\section{Basic estimates}\label{appendixB}
	\begin{proposition}
		Suppose that $V(|y|)$ satisfies $({\bf{V_1}})$ and $({\bf{V_2}}),(r,h)\in \mathscr{S}_k,m> \displaystyle\max\left\{\frac{4}{p-1},4\right\}$ then
		\begin{equation}\label{B1}
			\frac{a(m+1)\sqrt{1-h^2}(1+o(1))}{2r_k^{m+1}}\int_{\R^N}U(|y|)^2=(4B_1+o(1))U(|x_{2}^{+}-x_{1}^{+}|)\frac{\pi}{k},
		\end{equation}
		and  \begin{equation}\label{B13}
			\frac{a(m+1)h(1+o(1))}{2r_k^{m+1}}\int_{\R^N}U(|y|)^2=(B_1+o(1))U(|x_{1}^{+}-x_{1}^{-}|).
		\end{equation}

		\begin{proof}
			Since
			
			\[
			\int_{\mathbb R^N}  \bigg(  \nabla ( W_{r_k,h_k}+\omega_k )\nabla \frac{\partial U_{x_{1}^{+}}}{\partial y_1} + V(y) ( W_{r_k,h_k}+\omega_k )  \frac{\partial U_{x_{1}^{+}}}{\partial y_1}- ( W_{r_k,h_k}+\omega_k )^p
			\frac{\partial U_{x_{1}^{+}}}{\partial y_1}\bigg)=0,
			\]
			then
			
			\begin{align*}
				& \int_{\mathbb R^N}  ( V(y) -1)( W_{r_k,h_k}+\omega_k )  \frac{\partial U_{x_{1}^{+}}}{\partial y_1}\\=& \int_{\mathbb R^N}\left( ( W_{r_k,h_k}+\omega_k )^p- \sum_{j=1}^k \bigg(U^p_{x_{j}^{+}}+U^p_{x_{j}^{-}}\bigg)\right)
				\frac{\partial U_{x_{1}^{+}}}{\partial y_1}+\int_{\mathbb R^N}\omega_k U_{x_{1}^{+}}^{p-1} \frac{\partial U_{x_{1}^{+}}}{\partial y_1}\\&=\int_{\mathbb R^N}\left( ( W_{r_k,h_k}+\omega_k )^p- \sum_{j=1}^k \bigg(U^p_{x_{j}^{+}}+U^p_{x_{j}^{-}}\bigg)\right)
				\frac{\partial U_{x_{1}^{+}}}{\partial y_1}.
			\end{align*}

			It is easy to check that

			\[
			\begin{split}
				&\int_{\mathbb R^N}  \bigl( ( V(y) -1)( W_{r_k,h_k}+\omega_k )  \frac{\partial U_{x_{1}^{+}}}{\partial y_1}\\
				=&\int_{\mathbb R^N}   ( V(y) -1)U_{x_{1}^{+}}  \frac{\partial U_{x_{1}^{+}}}{\partial y_1}+ O\left( \frac{e^{-(1-\tau)(|x_{2}^{+}-x_{1}^{+}|+|x_{1}^{+}-x_{1}^{-}|)}) }{r_k^{m}}+ || \omega_k||_* \left( \frac{1 }{r_k^{m}}+e^{-(1-\tau)(|x_{2}^{+}-x_{1}^{+}|+|x_{1}^{+}-x_{1}^{-}|)}\right)\right)\\
				=&-\frac12 \int_{\mathbb R^N}   \frac{\partial  V(y) }{\partial y_1}U_{x_{1}^{+}}^2+ O\Bigl( \frac{1 }{r_k^{2m}}+ e^{- \min\{ p-\tau, 2 \}|x_{2}^{+}-x_{1}^{+}|}+e^{- \min\{ p-\tau, 2 \}|x_{1}^{+}-x_{1}^{-}|}\Bigr)\\
				=&\frac{a (m+1) x_{1,1}^{+}}{2r_k^{m+2}}\int_{\mathbb R^N} U(|y|)^2 + O\Bigl(\frac{1 }{r_k^{m+2}}+ e^{- \min\{ p-\tau, 2 \}|x_{2}^{+}-x_{1}^{+}|}\Bigr)\\
				=&\frac{a (m+1) \sqrt{1-h^2}}{2r_k^{m+1}}\int_{\mathbb R^N} U(|y|)^2 + O\Bigl(\frac{1 }{r_k^{m+2}}+ e^{- \min\{ p-\tau, 2 \}|x_{2}^{+}-x_{1}^{+}|}\Bigr).
			\end{split}
			\]
			Moreover,

			\[
			\begin{split}
				&\int_{\mathbb R^N}\left( ( W_{r_k,h_k}+\omega_k )^p- \sum_{j=1}^k \bigg(U^p_{x_{j}^{+}}+U^p_{x_{j}^{-}}\bigg)\right)
				\frac{\partial U_{x_{1}^{+}}}{\partial y_1}\\
				=&p\int_{\mathbb R^N} U_{x_{1}^{+}}^{p-1} \bigg(\sum_{j=2}^k U_{x_{j}^{+}}+\sum_{j=1}^k U_{x_{j}^{-}}\bigg)
				\frac{\partial U_{x_{1}^{+}}}{\partial y_1}+O\Bigl(  \frac{1 }{r_k^{2m}}+  e^{- \min\{ p-\tau, 2 \}|x_{2}^{+}-x_{1}^{+}|}+e^{- \min\{ p-\tau, 2 \}|x_{1}^{+}-x_{1}^{-}|}\Bigr)\\
				=&-\int_{\mathbb R^N} U_{x_{1}^{+}}^{p} \bigg(\sum_{j=2}^k
				\frac{\partial U_{x_{j}^{+}}}{\partial y_1}+\sum_{j=1}^k
				\frac{\partial U_{x_{j}^{-}}}{\partial y_1}\bigg)+O\Bigl( \frac{1 }{r_k^{2m}}+ e^{- \min\{ p-\tau, 2 \}|x_{2}^{+}-x_{1}^{+}|}+e^{- \min\{ p-\tau, 2 \}|x_{1}^{+}-x_{1}^{-}|}\Bigr)\\
				=& \bigl(2B_1 +o(1)\bigr)U(|x_{2}^{+}-x_{1}^{+}|)\Bigl( \frac{(x_{1}^{+}-x_{2}^{+})_1}{|x_{1}^{+}-x_{2}^{+}|}+\frac{(x_{1}^{+}-x_{k}^{+})_1}{|x_{1}^{+}-x_{k}^{+}|}\Bigr)\\
				&+O\Bigl( \frac{1 }{r_k^{2m}}+ e^{- \min\{ p-\tau, 2 \}|x_{2}^{+}-x_{1}^{+}|}+e^{- \min\{ p-\tau, 2 \}|x_{1}^{+}-x_{1}^{-}|}\Bigr).
			\end{split}
			\]
			Therefore, we have

			\begin{equation}\label{1-18-4}
				\begin{split}
					&\frac{a (m+1)\sqrt{1-h^2}}{2r_k^{m+1}}\int_{\mathbb R^N} U(|y|)^2
					\\
					=&  \bigl(2B_1 +o(1)\bigr) U(|x_{2}^{+}-x_{1}^{+}|)\Bigl( \frac{(x_{2}^{+}-x_{1}^{+})_1}{|x_{2}^{+}-x_{1}^{+}|}+\frac{(x_{k}^{+}-x_{1}^{+})_1}{|x_{k}^{+}-x_{1}^{+}|}\Bigr)\\
					&+O\Bigl( \frac{1 }{r_k^{2m}}+\frac1{r_k^{m+2}}+ e^{- \min\{ p-\tau, 2 \}|x_{2}^{+}-x_{1}^{+}|}\Bigr)\\
					=& \bigl(4B_1 +o(1)\bigr) U(|x_{2}^{+}-x_{1}^{+}|)\frac{\pi}{k}+O\Bigl( \frac{1 }{r_k^{2m}}+\frac{1}{r_k^{m+2}}+ e^{- \min\{ p-\tau, 2 \}|x_{2}^{+}-x_{1}^{+}|}\Bigr).
				\end{split}
			\end{equation}
			Noting that   $e^{-|x_{2}^{+}-x_{1}^{+}|}\sim \displaystyle\frac{1 }{k^{m}}$  and $m>\displaystyle\frac4{p-1}$, it holds
			
			\[
			e^{- \min\{ p-\tau, 2 \}|x_{2}^{+}-x_{1}^{+}|}\le   \frac{C }{r_k^{\min\{ p-\tau, 2 \}m}}= o\bigg(\frac{1 }{r_k^{m+1}}\bigg),
			\]
			then we get \eqref{B1}.
			
			On the other hand, we can similarly obtain that
			\begin{equation*}
				\int_{\mathbb R^N}  \bigl( ( V(y) -1)( W_{r_k,h_k}+\omega_k )  \frac{\partial U_{x_{1}^{+}}}{\partial y_3}
				=\frac{a (m+1) h}{2r_k^{m+1}}\int_{\mathbb R^N} U(|y|)^2 + O\Bigl(\frac{1 }{r_k^{m+2}}+ e^{- \min\{ p-\tau, 2 \}|x_{2}^{+}-x_{1}^{+}|}\Bigr),
			\end{equation*}
			and
			\[
			\begin{split}
				&\int_{\mathbb R^N}\left( ( W_{r_k,h_k}+\omega_k )^p- \sum_{j=1}^k \bigg(U^p_{x_{j}^{+}}+U^p_{x_{j}^{-}}\bigg)\right)
				\frac{\partial U_{x_{1}^{+}}}{\partial y_3}\\
				=&-\int_{\mathbb R^N} U_{x_{1}^{+}}^{p} \bigg(\sum_{j=2}^k
				\frac{\partial U_{x_{j}^{+}}}{\partial y_3}+\sum_{j=1}^k
				\frac{\partial U_{x_{j}^{-}}}{\partial y_3}\bigg)+O\Bigl( \frac{1 }{r_k^{2m}}+ e^{- \min\{ p-\tau, 2 \}|x_{2}^{+}-x_{1}^{+}|}+e^{- \min\{ p-\tau, 2 \}|x_{1}^{+}-x_{1}^{-}|}\Bigr)\\
				=& \bigl(B_1 +o(1)\bigr)U(|x_{1}^{+}-x_{1}^{-}|)+O\Bigl( \frac{1 }{r_k^{2m}}+ e^{- \min\{ p-\tau, 2 \}|x_{2}^{+}-x_{1}^{+}|}+e^{- \min\{ p-\tau, 2 \}|x_{1}^{+}-x_{1}^{-}|}\Bigr).
			\end{split}
			\]
			Then from
			\begin{align*}
				\int_{\mathbb R^N}  ( V(y) -1)( W_{r_k,h_k}+\omega_k )  \frac{\partial U_{x_{1}^{+}}}{\partial y_3}=\int_{\mathbb R^N}\left( ( W_{r_k,h_k}+\omega_k )^p- \sum_{j=1}^k \bigg(U^p_{x_{j}^{+}}+U^p_{x_{j}^{-}}\bigg)\right)
				\frac{\partial U_{x_{1}^{+}}}{\partial y_3},
			\end{align*}
			we have
			\begin{equation}
				\frac{a (m+1) h}{2r_k^{m+1}}\int_{\mathbb R^N} U(|y|)^2 =\bigl(B_1 +o(1)\bigr)U(|x_{1}^{+}-x_{1}^{-}|)+O\Bigl( \frac{1 }{r_k^{2m}}+\frac{1 }{r_k^{m+2}}+ e^{- \min\{ p-\tau, 2 \}|x_{2}^{+}-x_{1}^{+}|}\Bigr).
			\end{equation}
			Since $h\sim \displaystyle\frac{1}{k}$ and $m>\displaystyle\frac{4}{p-1},$ then
			\begin{equation}
				\frac{1 }{r_k^{m+2}}=o\bigg(\frac{h }{r_k^{m+1}}\bigg),\qquad e^{- \min\{ p-\tau, 2 \}|x_{2}^{+}-x_{1}^{+}|}= o\bigg(\frac{h }{r_k^{m+1}}\bigg).
			\end{equation}
			Thus, \eqref{B13} holds.

		\end{proof}
	\end{proposition}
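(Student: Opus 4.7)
The plan is to obtain both identities as balance conditions for the solution $u_k = W_{r_k, h_k} + \omega_k$, derived by testing the equation $-\Delta u_k + V(|y|) u_k = u_k^p$ against the kernel functions $\partial U_{x_1^+}/\partial y_1$ (to get \eqref{B1}) and $\partial U_{x_1^+}/\partial y_3$ (to get \eqref{B13}). Since each $\partial U_{x_1^+}/\partial y_j$ lies in the kernel of the linearized operator $-\Delta + 1 - p U_{x_1^+}^{p-1}$ on $\R^N$, integrating the equation against such a test function and integrating by parts converts everything into
\[
\int_{\R^N}(V(|y|)-1)(W_{r_k,h_k}+\omega_k)\frac{\partial U_{x_1^+}}{\partial y_j} = \int_{\R^N}\Bigl((W_{r_k,h_k}+\omega_k)^p - \sum_{i=1}^k(U_{x_i^+}^p + U_{x_i^-}^p)\Bigr)\frac{\partial U_{x_1^+}}{\partial y_j},
\]
plus a term $p\int \omega_k U_{x_1^+}^{p-1}(\partial U_{x_1^+}/\partial y_j)$ which vanishes by the orthogonality defining $\mathbb E_k$.

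For the left-hand side, the dominant contribution comes from replacing $W_{r_k, h_k}$ by $U_{x_1^+}$ and using the identity $\int(V-1)U_{x_1^+}\,\partial_{y_j} U_{x_1^+} = -\tfrac12 \int \partial_{y_j}V(y)\,U_{x_1^+}(y)^2$. Then I translate via $y\mapsto y + x_1^+$ and apply $({\bf V_2})$, whose leading piece $V'(r) \sim -a_1 m/r^{m+1}$ combined with $(y+x_1^+)_j/|y+x_1^+| \sim (x_1^+)_j/|x_1^+|$ produces the leading term
\[
\frac{a(m+1)\,(x_1^+)_j}{2\,r_k^{m+2}} \int_{\R^N} U(|y|)^2.
\]
Substituting $(x_1^+)_1 = r_k\sqrt{1-h^2}$ and $(x_1^+)_3 = r_k h$ gives exactly the prefactors on the left-hand sides of \eqref{B1} and \eqref{B13}. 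Contributions to the LHS from the other bubbles (via $W_{r_k,h_k} - U_{x_1^+}$) and from $\omega_k$ are controlled by $O(r_k^{-m}\,e^{-(1-\tau)|x_2^+ - x_1^+|})$ and $\|\omega_k\|_* \cdot r_k^{-m}$, respectively.

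For the right-hand side, the nonlinear difference $W_{r_k,h_k}^p - \sum(U_{x_i^+}^p + U_{x_i^-}^p)$ equals to leading order $pU_{x_1^+}^{p-1}\bigl(\sum_{i\ne 1} U_{x_i^+} + \sum_i U_{x_i^-}\bigr)$ in a neighborhood of $x_1^+$. One more integration by parts converts this into $-\int U_{x_1^+}^p \sum_{(i,\pm)\ne (1,+)} \partial_{y_j} U_{x_i^\pm}$, up to smaller corrections. Using the asymptotics of $U$ and $B_1 = C_0 \int U^p e^{-y_1}$, only the nearest neighbors $x_2^+$, $x_k^+$, and $x_1^-$ contribute at leading order. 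For $j=1$ the in-plane pair $x_2^+, x_k^+$ yields $2B_1 U(|x_2^+ - x_1^+|)\bigl(\tfrac{(x_2^+-x_1^+)_1}{|x_2^+-x_1^+|} + \tfrac{(x_k^+-x_1^+)_1}{|x_k^+-x_1^+|}\bigr)\sim 4B_1 U(|x_2^+-x_1^+|)\,\pi/k$, while $x_1^-$ is negligible since $(x_1^- - x_1^+)_1 = 0$. For $j=3$ the in-plane pair contributes zero (third coordinates of $x_2^+ - x_1^+$ and $x_k^+ - x_1^+$ vanish), leaving only $B_1 U(|x_1^+ - x_1^-|)$ from $x_1^-$.

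The main obstacle is the uniform bookkeeping of the error terms against the two different leading orders $\sqrt{1-h^2}/r_k^{m+1}$ and $h/r_k^{m+1}$. Using Proposition~\ref{pro2.4} for $\|\omega_k\|_*$, the Taylor remainders in $V$, and the nonlinear interaction errors of order $e^{-\min\{p-\tau,2\}|x_2^+-x_1^+|} + e^{-\min\{p-\tau,2\}|x_1^+-x_1^-|}$, the hypothesis $m > 4/(p-1)$ forces $e^{-\min\{p-\tau,2\}|x_2^+-x_1^+|} \le C k^{-\min\{p-\tau,2\}m} = o(r_k^{-m-1})$; combined with $h \sim 1/k$ from $(r,h) \in \mathscr{S}_k$, all errors fall strictly below the respective leading orders, which yields \eqref{B1} and \eqref{B13}.
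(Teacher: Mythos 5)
Your proposal is correct and follows essentially the same route as the paper: test the equation against $\partial_{y_1}U_{x_1^+}$ and $\partial_{y_3}U_{x_1^+}$, use the kernel property of these test functions together with the orthogonality conditions defining $\mathbb E_k$ to kill the $\omega_k$-term, Taylor-expand $V$ via $({\bf V_1})$--$({\bf V_2})$ after the identity $\int(V-1)U_{x_1^+}\,\partial_{y_j}U_{x_1^+}=-\tfrac12\int \partial_{y_j}V\,U_{x_1^+}^2$, and extract the nearest-neighbor interactions $x_2^+,x_k^+$ (for $j=1$) and $x_1^-$ (for $j=3$) from the nonlinear term. (As a minor remark, you state the leading piece of $V'$ as $-a_1m/r^{m+1}$ but then write the coefficient $a(m+1)$; this mismatch is also present in the paper's own appendix, and the exact positive constant does not affect the subsequent use of the proposition.)
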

	
	\medskip

	\begin{lemma}\label{lemB.2}
		Suppose that $(r,h)\in \mathscr{S}_k$, $\Omega=B_{\frac{1}{2}|x^+_2-x^+_1|(x^+_1)}$, we have for $j=1,\cdots,k$.
		\begin{equation}\label{4-20-3}
			\begin{split}
				I_\ell\bigg(U_{x^{+}_{j}} ,  \frac{\partial U_{x^{+}_{j}}}{\partial r},  \Omega\bigg)= O\bigl( e^{-(\frac {p+1}2 -\sigma)|x^{+}_{2}-x^{+}_{1}|}\bigr),
			\end{split}
		\end{equation}
		\begin{equation}\label{h1}
			\begin{split}
				I_\ell\bigg(U_{x^{+}_{j}} ,  \frac{\partial U_{x^{+}_{j}}}{\partial h},  \Omega\bigg)= O\bigl(r_k e^{-(\frac {p+1}2 -\sigma)|x^{+}_{2}-x^{+}_{1}|}\bigr).
			\end{split}
		\end{equation}
		For $i\ne j$, we have
		
		\begin{equation}\label{5-20-3}
			\begin{split}
				I_\ell\bigg(U_{x^{+}_{i}} ,  \frac{\partial U_{x^{+}_{j}}}{\partial r},  \Omega\bigg)
				=p \int_\Omega U_{x^{+}_{j}}^{p-1}\frac{\partial U_{x^{+}_{j}}}{\partial r}\frac{\partial U_{x^{+}_{i}} }{\partial y_\ell}  -p \int_\Omega U_{x^{+}_{i}}^{p-1}\frac{\partial U_{x^{+}_{j}}}{\partial r}\frac{\partial U_{x^{+}_{i}} }{\partial y_\ell}+O\bigl( e^{-(\frac {p+1}2 -\sigma) |x^{+}_{2}-x^{+}_{1}|}\bigr),
			\end{split}
		\end{equation}
		\begin{equation}\label{h2}
			\begin{split}
				I_\ell\bigg(U_{x^{+}_{i}} ,  \frac{\partial U_{x^{+}_{j}}}{\partial h},  \Omega\bigg)
				=p \int_\Omega U_{x^{+}_{j}}^{p-1}\frac{\partial U_{x^{+}_{j}}}{\partial h}\frac{\partial U_{x^{+}_{i}} }{\partial y_\ell}  -p \int_\Omega U_{x^{+}_{i}}^{p-1}\frac{\partial U_{x^{+}_{j}}}{\partial h}\frac{\partial U_{x^{+}_{i}} }{\partial y_\ell}+O\bigl(r_k e^{-(\frac {p+1}2 -\sigma) |x^{+}_{2}-x^{+}_{1}|}\bigr).
			\end{split}
		\end{equation}
		And for $j=1,\cdots,k$, it holds that
		\begin{equation}\label{5-20-4}
			\begin{split}
				I_\ell\bigg(U_{x^{+}_{j}} ,  \frac{\partial U_{x^{-}_{j}}}{\partial r},  \Omega\bigg)
				=p \int_\Omega U_{x^{-}_{j}}^{p-1}\frac{\partial U_{x^{-}_{j}}}{\partial r}\frac{\partial U_{x^{+}_{j}} }{\partial y_\ell}  -p \int_\Omega U_{x^{+}_{j}}^{p-1}\frac{\partial U_{x^{-}_{j}}}{\partial r}\frac{\partial U_{x^{+}_{j}} }{\partial y_\ell}+O\bigl( e^{-(\frac {p+1}2 -\sigma) |x^{+}_{2}-x^{+}_{1}|}\bigr),
			\end{split}
		\end{equation}
		\begin{equation}\label{h3}
			\begin{split}
				I_\ell\bigg(U_{x^{+}_{j}} ,  \frac{\partial U_{x^{-}_{j}}}{\partial h},  \Omega\bigg)
				=p \int_\Omega U_{x^{-}_{j}}^{p-1}\frac{\partial U_{x^{-}_{j}}}{\partial h}\frac{\partial U_{x^{+}_{j}} }{\partial y_\ell}  -p \int_\Omega U_{x^{+}_{j}}^{p-1}\frac{\partial U_{x^{-}_{j}}}{\partial h}\frac{\partial U_{x^{+}_{j}} }{\partial y_\ell}+O\bigl( r_k e^{-(\frac {p+1}2 -\sigma) |x^{+}_{2}-x^{+}_{1}|}\bigr),
			\end{split}
		\end{equation}
		where  $\ell=1,3$ and $\sigma>0$ is any fixed small constant.
	\end{lemma}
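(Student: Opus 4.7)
The plan is to apply the Pohozaev-type identity recorded in the main text, namely
\begin{equation*}
I_\ell(u,\xi,\Omega) \;=\; \int_\Omega \bigl(-\Delta u + u - u^p\bigr)\frac{\partial \xi}{\partial y_\ell} + \int_\Omega \bigl(-\Delta \xi + \xi - pu^{p-1}\xi\bigr)\frac{\partial u}{\partial y_\ell} + \int_{\partial \Omega} u^p \xi\,\nu_\ell,
\end{equation*}
with $u$ and $\xi$ chosen among the bubbles $U_{x_j^{\pm}}$ and their derivatives in $r$ or $h$. This identity follows by integration by parts and does not require $u,\xi$ to solve any particular equation; it is the key structural tool and reduces each $I_\ell$ to interior integrals that vanish (or nearly vanish) because of the bubble equation, plus a single boundary term on $\partial\Omega$.

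First I would handle the diagonal cases (\ref{4-20-3}) and (\ref{h1}). Taking $u=U_{x_j^{+}}$ makes $-\Delta u+u-u^p\equiv 0$, while taking $\xi=\partial_r U_{x_j^{+}}$ (resp.\ $\partial_h U_{x_j^{+}}$) makes $-\Delta \xi+\xi-pU_{x_j^{+}}^{p-1}\xi\equiv 0$. Hence the two interior integrals in the identity vanish identically and only $\int_{\partial \Omega} U_{x_j^{+}}^{p}\,\partial_r U_{x_j^{+}}\,\nu_\ell$ (resp.\ the analogous $h$-term) survives. Since on $\partial\Omega$ every bubble center satisfies $|y-x_j^{\pm}|\ge \tfrac12|x_2^{+}-x_1^{+}|$, the asymptotic $U(|z|)\sim C|z|^{-(N-1)/2}e^{-|z|}$ and the chain rule identities
\begin{equation*}
\partial_r U_{x_j^{\pm}} = O\bigl(U(|y-x_j^{\pm}|)\bigr),\qquad \partial_h U_{x_j^{\pm}} = O\bigl(r_k\,U(|y-x_j^{\pm}|)\bigr),
\end{equation*}
give the desired bound $O(e^{-(\frac{p+1}{2}-\sigma)|x_2^{+}-x_1^{+}|})$, with the extra factor $r_k$ in the $h$-case coming from the second estimate; the polynomial corrections are absorbed into the arbitrary small $\sigma>0$.

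Next I would treat the off-diagonal and top/bottom-coupled cases (\ref{5-20-3})--(\ref{h3}). Now $u=U_{x_i^{+}}$ (or $U_{x_j^{+}}$ paired with a derivative of $U_{x_j^{-}}$), while $\xi$ is a derivative of a \emph{different} bubble. The first interior integral still vanishes by the bubble equation for $u$; the second interior integral no longer vanishes but simplifies to
\begin{equation*}
\int_\Omega \bigl(-\Delta \xi + \xi - pU_{x_i^{+}}^{p-1}\xi\bigr)\partial_{y_\ell} u \;=\; p\int_\Omega \bigl(U_{x_j^{+}}^{p-1}-U_{x_i^{+}}^{p-1}\bigr)\xi\,\partial_{y_\ell}u,
\end{equation*}
since $\xi$ solves the linearized equation about $U_{x_j^{+}}$. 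Expanding the two terms yields exactly the two bulk integrals in the claimed formulas. The remaining boundary integral $\int_{\partial\Omega}U_{x_i^{+}}^{p}\xi\,\nu_\ell$ is again controlled on $\partial\Omega$ by $e^{-(\frac{p+1}{2}-\sigma)|x_2^{+}-x_1^{+}|}$ (with an extra $r_k$ factor when $\xi$ is the $h$-derivative), exactly as above.

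The routine but delicate part will be keeping the polynomial prefactors under control while merging the exponents: in each case one must verify that the distance from $\partial\Omega$ to every center $x_j^{\pm}$ (including $x_j^{-}$, which lies closer when $h\sim 1/k$) is at least $\tfrac12|x_2^{+}-x_1^{+}|(1-o(1))$, so that $U^p$ on the boundary integrates to something of order $e^{-(\frac{p+1}{2}-\sigma)|x_2^{+}-x_1^{+}|}$ after accounting for the surface measure $\sim r_k^{N-1}$, which is absorbed by the small $\sigma$ loss in the exponent since the exponential factor dominates any polynomial in $r_k$. This uniformity is the main technical check, but it poses no conceptual obstacle because $(r,h)\in \mathscr{S}_k$ forces $|x_1^{+}-x_1^{-}|=2r h\sim (m+2)\ln k$ to be comparable to $|x_2^{+}-x_1^{+}|\sim m\ln k$, so all relevant distances are of the same logarithmic order.
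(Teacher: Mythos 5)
Your proposal is correct and follows essentially the same route as the paper: apply the algebraic Pohozaev-type identity (equation \eqref{3-20-3} in the text) with $u$ and $\xi$ taken to be bubbles and their $r$- or $h$-derivatives, use the fact that each such pair annihilates one (diagonal case) or both (off-diagonal case only the $u$-factor) of the interior integrands by the bubble equation and its linearization, and bound the surviving boundary integral on $\partial\Omega = \partial B_{\frac12|x_2^+ - x_1^+|}(x_1^+)$ by the exponential decay of $U$. The identification of where the extra $r_k$ factor comes from ($|\partial_h U_{x_j^\pm}| = O(r_k U(|y - x_j^\pm|))$ because the $h$-derivative of the center has magnitude $\sim r$) is exactly what the paper does, as is the reduction of the second interior integral in the off-diagonal case to $p\int_\Omega (U_{x_j^+}^{p-1} - U_{x_i^+}^{p-1})\xi\,\partial_{y_\ell}u$. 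One minor inaccuracy worth flagging: you write that the surface measure of $\partial\Omega$ is $\sim r_k^{N-1}$, but $\Omega$ has radius $\tfrac12|x_2^+ - x_1^+| \sim \tfrac{m}{2}\ln k$, so the surface area is $\sim (\ln k)^{N-1}$. This matters in principle, because $(r_k)^{N-1}\sim(k\ln k)^{N-1}$ would \emph{not} be absorbed by $e^{\sigma|x_2^+ - x_1^+|}=k^{\sigma m}$ for arbitrarily small $\sigma$, whereas $(\ln k)^{N-1}$ is; since the correct prefactor is the latter, the conclusion stands, but the justification as written is slightly off.
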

	
	\begin{proof}
		For brevity, we only prove  the the case when $\ell=1$. According to \eqref{1-27-11} and \eqref{10-13-12}, we have for $i=j$,
		\begin{equation}
			\begin{split}
				I_1\bigg(U_{x^{+}_{i}} ,  \frac{\partial U_{x^{+}_{j}}}{\partial r},  \Omega\bigg)
				= &\int_\Omega \bigl( -\Delta U_{x^{+}_{i}} +   U_{x^{+}_{i}} - U_{x^{+}_{i}}^p\bigr) \frac{\partial \frac{\partial U_{x_{j}^{+}}}{\partial r}}{\partial y_1}\\
				& +
				\int_\Omega \bigl( -\Delta \frac{\partial U_{x^{+}_{j}}}{\partial r}  +  \frac{\partial U_{x^{+}_{j}}}{\partial r} -p U_{x^{+}_{i}}^{p-1}\frac{\partial U_{x^{+}_{j}}}{\partial r}\bigr)\frac{\partial U_{x^{+}_{i}} }{\partial y_1}+\int_{\partial \Omega} U_{x^{+}_{i}}^p \frac{\partial U_{x^{+}_{j}}}{\partial r} \nu_1\\
				=&\int_{\partial \Omega} U_{x^{+}_{j}}^p \frac{\partial U_{x^{+}_{j}}}{\partial r} \nu_1= O\bigl( e^{-(\frac {p+1}2 -\sigma)|x^{+}_{2}-x^{+}_{1}|}\bigr),
			\end{split}
		\end{equation}
		While if $i\ne j$, we have
		\begin{equation}
			\begin{split}
				I_1\bigg(U_{x^{+}_{i}}& ,  \frac{\partial U_{x^{+}_{j}}}{\partial r},  \Omega\bigg)
				\\
				= &
				\int_\Omega \bigl( -\Delta \frac{\partial U_{x^{+}_{j}}}{\partial r}  +  \frac{\partial U_{x^{+}_{j}}}{\partial r} -p U_{x^{+}_{i}}^{p-1}\frac{\partial U_{x^{+}_{j}}}{\partial r}\bigr)\frac{\partial U_{x^{+}_{i}} }{\partial y_1}
				+   O\bigl( e^{-(\frac {p+1}2 -\sigma)|x^{+}_{2}-x^{+}_{1}|}\bigr)\\
				=&p \int_\Omega U_{x^{+}_{j}}^{p-1}\frac{\partial U_{x^{+}_{j}}}{\partial r}\frac{\partial U_{x^{+}_{i}} }{\partial y_1}  -p \int_\Omega U_{x^{+}_{i}}^{p-1}\frac{\partial U_{x^{+}_{j}}}{\partial r}\frac{\partial U_{x^{+}_{i}} }{\partial y_1}+O\bigl( e^{-(\frac {p+1}2 -\sigma) |x^{+}_{2}-x^{+}_{1}|}\bigr),
			\end{split}
		\end{equation}
		and for $j=1,\cdots,k$, we have
		\begin{equation}
			\begin{split}
				& I_1\bigg(U_{x^{+}_{j}} ,  \frac{\partial U_{x^{-}_{j}}}{\partial r},  \Omega\bigg)
				\\
				=&p \int_\Omega U_{x^{-}_{j}}^{p-1}\frac{\partial U_{x^{-}_{j}}}{\partial r}\frac{\partial U_{x^{+}_{j}} }{\partial y_1}  -p \int_\Omega U_{x^{+}_{j}}^{p-1}\frac{\partial U_{x^{-}_{j}}}{\partial r}\frac{\partial U_{x^{+}_{j}} }{\partial y_1}+O\bigl( e^{-(\frac {p+1}2 -\sigma) |x^{+}_{2}-x^{+}_{1}|}+e^{-(\frac {p+1}2 -\sigma) |x^{+}_{1}-x^{-}_{1}|}\bigr).
			\end{split}
		\end{equation}
		
		Similarly, we calculate it for $i=j$,
		\begin{equation}
			\begin{split}
				& I_1\bigg(U_{x^{+}_{i}} ,  \frac{\partial U_{x^{+}_{j}}}{\partial h},  \Omega\bigg)= \int_{\partial \Omega} U_{x^{+}_{j}}^p \frac{\partial U_{x^{+}_{j}}}{\partial h} \nu_1= O\bigl(r_k e^{-(\frac {p+1}2 -\sigma)|x^{+}_{2}-x^{+}_{1}|}\bigr),
			\end{split}
		\end{equation}
		For $i\ne j$, we have
		\begin{equation}
			\begin{split}
				I_1\bigg(U_{x^{+}_{i}} , & \frac{\partial U_{x^{+}_{j}}}{\partial h},  \Omega\bigg)
				\\
				= &
				\int_\Omega \bigl( -\Delta \frac{\partial U_{x^{+}_{j}}}{\partial h}  +  \frac{\partial U_{x^{+}_{j}}}{\partial h} -p U_{x^{+}_{i}}^{p-1}\frac{\partial U_{x^{+}_{j}}}{\partial h}\bigr)\frac{\partial U_{x^{+}_{i}} }{\partial y_1}
				+   O\bigl(r_k e^{-(\frac {p+1}2 -\sigma)|x^{+}_{2}-x^{+}_{1}|}\bigr)\\
				=&p \int_\Omega U_{x^{+}_{j}}^{p-1}\frac{\partial U_{x^{+}_{j}}}{\partial h}\frac{\partial U_{x^{+}_{i}} }{\partial y_1}  -p \int_\Omega U_{x^{+}_{i}}^{p-1}\frac{\partial U_{x^{+}_{j}}}{\partial h}\frac{\partial U_{x^{+}_{i}} }{\partial y_1}+O\bigl(r_k e^{-(\frac {p+1}2 -\sigma) |x^{+}_{2}-x^{+}_{1}|}\bigr),
			\end{split}
		\end{equation}
		and for $j=1,\cdots,k$, we have
		\begin{equation}
			\begin{split}
				& I_1\bigg(U_{x^{+}_{j}} ,  \frac{\partial U_{x^{-}_{j}}}{\partial h},  \Omega\bigg)
				\\
				=&p \int_\Omega U_{x^{-}_{j}}^{p-1}\frac{\partial U_{x^{-}_{j}}}{\partial h}\frac{\partial U_{x^{+}_{j}} }{\partial y_1}  -p \int_\Omega U_{x^{+}_{j}}^{p-1}\frac{\partial U_{x^{-}_{j}}}{\partial h}\frac{\partial U_{x^{+}_{j}} }{\partial y_1}+O\bigl( r_k e^{-(\frac {p+1}2 -\sigma) |x^{+}_{2}-x^{+}_{1}|}+r_k e^{-(\frac {p+1}2 -\sigma) |x^{+}_{1}-x^{-}_{1}|}\bigr).
			\end{split}
		\end{equation}

	\end{proof}

\end{document}